\theoremstyle{plain}
\newtheorem{theorem}{Theorem}
\newtheorem{proposition}[theorem]{Proposition}
\newtheorem{corollary}[theorem]{Corollary}
\newtheorem{lemma}[theorem]{Lemma}
\theoremstyle{definition}
\newtheorem{definition}[theorem]{Definition}
\theoremstyle{remark}
\newtheorem*{remark}{Remark}
\def\E{\mathbb{E}}
\def\A{\mathcal{A}}
\def\B{\mathcal{B}}
\def\H{\mathcal{H}}
\def\K{\mathcal{K}}
\def\F{\mathcal{F}}
\def\St{\mathcal{C}}               
\def\es{\emptyset}
\def\mc{\mathcal}
\def\ms{\mathsf}
\def\one{\mathbbmss{1}}
\def\Q{\mathbb{Q}}
\def\P{\mathbb{P}}
\def\PP{\mathcal{P}}
\def\R{\mathbb{R}}
\def\Z{\mathbb{Z}}
\def\Rng{\ms{Rng}}					
\def\Vor{\ms{Vor}}					
\def\vCell{\ms{Cell}}
\def\conv{\ms{conv}}
\def\discr{\ms{discr}}				
\def\dist{\ms{dist}}				
\def\vOmega{\Omega}					
\def\vPhi{\Phi}						
\def\vPsi{\Psi}	
\def\vXi{\Xi}						
\def\vXir{\Xi^{\ominus r}}				
\def\valpha{\alpha}					
\def\vbeta{\beta}					
\def\vdelta{\delta}					
\def\vgamma{\gamma}					
\def\vla{\lambda}					
\def\vphi{\varphi}					
\def\vpsi{\psi}						
\def\vtau{\tau}						
\newcommand{\tauest}[2]{\widehat{\vtau}_{ #2}^{#1 }}		
\def\vzeta{\zeta}					
\def\vA{A}		
\def\vB{B}		
\def\vC{C}      
\def\vF{F}		
\def\vK{K}		
\def\vG{G}		
\def\vH{H}		
\def\vI{I}		
\def\vJ{J}		
\def\vK{K}		
\def\vL{L}		
\def\vN{N}		
\def\vQ{Q}		
\def\vR{R}		
\def\vS{\vH^{}_{\ms{O}}}		
\def\vT{T}		
\def\vTT{\mathbf{T}} 
\def\vU{U}		
\def\vV{V}		
\def\vW{W}		
\def\vX{X}
\def\vZ{Z}		
\def\va{a}	
\def\vd{d}		
\def\vf{f}		
\def\vk{k}
\def\vl{l}		
\def\vm{m}		
\def\vn{n}		
\def\vp{p}		
\def\vq{q}		
\def\vr{r}
\def\vt{t}		
\def\vv{e_d}		
\def\vx{x}		
\def\vy{y}		
\def\vz{z}		
\def\ceff{\sigma_{\mathrm{eff}}}					 
\def\sF{\sigma_{\mathcal{F}}}
\def\wt{\widetilde}
\def\rc{r_{\mathsf{c}}}								 
\def\Cinf{\mc{C}_\infty}							 
\newcommand{\vda}[1]{^{(#1)}}
\newcommand{\vWp}[1]{\vW^{}}
\newcommand{\vWpp}[2]{\vW^{#1}_{#2}}
\newcommand{\vRpp}[2]{\vR^{#1}_{#2}}
\newcommand{\vRminpp}[2]{\widetilde{\vR}^{#1}_{#2}}
\newcommand{\vWpI}{\vH^{}_{\ms{I}}}
\newcommand{\vWpIp}[1]{\vH^{}_{\ms{I},#1}}
\newcommand{\vWpIpp}[2]{\vH^{}_{\ms{I},#2}}
\newcommand{\vWpO}[1]{\vH^{}_{\ms{O}}}
\newcommand{\rmin}[2]{\vr_{\ms{min} #1{} #2}}
\newcommand{\rminest}[4]{{\widehat{\vr}}_{\ms{min}, #1{} #2 #3}^{#4}}
\newcommand{\rminestedge}[4]{{\widehat{\vr}}_{\ms{min}, #1{} #2 #3}^{\valpha}}
\newcommand{\rmax}{\vr_{\ms{max}}}
\newcommand{\estrmax}[1]{\widehat{\vr}_{\ms{max}, #1}}
\begin{document}
\title[Geodesic tortuosity and constrictivity]{Estimation of geodesic tortuosity and constrictivity in stationary random closed sets}
	
\author[Neumann, Hirsch, Stan\v{e}k, Bene\v{s}, Schmidt]{Matthias Neumann$^1$, Christian Hirsch$^2$, Jakub Stan\v{e}k$^3$, Viktor Bene\v{s}$^4$, Volker Schmidt$^1$}

\address{$^1$Institute of Stochastics, Ulm University}
\address{$^2$Department of Mathematics, Ludwig-Maximilians-Universit\"at M\"unchen}
\address{$^3$Department of Mathematics Education, Charles University Prague}
\address{$^4$Department of Probability and Mathematical Statistics, Charles University Prague}
	
\keywords{constrictivity, edge effects, geodesic tortuosity, Poisson point process, relative neighborhood graph}
\subjclass[2010]{60D05}

\thanks{The work of MN, JS, VB, VS was partially funded by the German Science Foundation (DFG, project number SCHM 997/23-1) and the Czech Science Foundation (GACR, project number 17-00393J). The work of CH was funded by LMU Munich's Institutional Strategy LMUexcellent within the framework of the German Excellence Initiative.}

\begin{abstract}
We investigate the problem of estimating geodesic tortuosity and constrictivity as two structural characteristics of stationary random closed sets. They are of central importance for the analysis of effective transport properties in porous or composite materials. Loosely speaking, geodesic tortuosity measures the windedness of paths whereas the notion of constrictivity captures the appearance of bottlenecks resulting from narrow passages within a given materials phase. We first provide mathematically precise definitions of these quantities and introduce appropriate estimators. Then, we show strong consistency of these estimators for unboundedly growing sampling windows. In order to apply our estimators to real datasets, the extent of edge effects needs to be controlled. This is illustrated using a model for a multi-phase material that is incorporated in solid oxid fuel cells.
\end{abstract}

\maketitle

\section{Introduction}
\label{IntroSec}
The interplay between geometric microstructure characteristics and physical properties of materials motivates a major stream of current research activities. A better understanding of this relationship can help to increase the overall performance of functional materials as diverse as such used in solid oxide fuel cells, batteries and solar cells. This paper provides tools for mathematical modeling and statistical analysis of two crucial quantities: \emph{geodesic tortuosity} and \emph{constrictivity}.

In materials science, diverse definitions of tortuosity are used for the characterization of microstructures \citep{clennell.1997}, where tortuosity is sometimes defined as an effective transport property. The notion of geodesic tortuosity measures the lengths of shortest transportation paths with respect to the materials thickness and is particularly well-suited for algorithmic estimations from 3D image data~\citep{peyrega.2013}. Although this characteristic merely depends on the geometry of the underlying microstructure, it essentially influences effective transport properties of the material. Indeed, if the microstructure is set up such that the transportation paths between two interfaces are long and highly winded, then this serves as a first indication for poor quality of effective transport properties.

Despite the importance of path lengths, conductivity of materials is also influenced by more fine-grained characteristics of the material along the trajectories. For instance, transport processes are substantially obstructed by the presence of frequent narrow bottlenecks of transportation paths. To capture this effect, constrictivity quantifies the appearance of bottlenecks resulting from narrow passages. In the setting of tubes with periodically appearing bottlenecks, constrictivity is intimately related to the concept of effective diffusivity~\citep{petersen.1958}.
 More recently, the definition of constrictivity was extended to complex microstructures  based on the continuous pore size distribution~\citep{holzer.2013b}. The latter characteristic is directly related to the notion of granulometry in mathematical morphology \citep{matheron.1975}.

Virtual materials testing -- that is, the combination of stochastic microstructure modeling, image analysis and numerical simulation -- made it possible to empirically derive a quantitative relationship between  volume fraction $\vp$, mean geodesic tortuosity $\vtau$, constrictivity $\vbeta$ and the ratio of effective conductivity $\ceff$ to intrinsic conductivity  \citep{stenzel.2016}. Validation with experimental data from literature showed that the relationship can be used to predict effective conductivity in real microstructures by geometric microstructure characteristics. 

In order to understand geometric properties of the microstructure in materials via statistical analysis and simulation, stochastic geometry has emerged as a powerful, scalable and versatile framework, see e.g. \citep{chiu.2013, kendall.2010, ohser.2009}. Despite their numerous applications in the materials science literature, the notions of tortuosity and constrictivity have not yet been analyzed from a mathematical point of view. In fact, even a rigorous definition in the framework of stationary random closed sets is not available. This raises the question whether the empirical estimates in the literature for both, mean geodesic tortuosity \citep{gommes.2009, peyrega.2013, soille.2003, stenzel.2016} and constrictivity \citep{holzer.2013b, stenzel.2016} have any statistical underpinning, or whether the findings are purely heuristical.

In the present paper, we address this issue by providing mathematically precise definitions of these quantities as well as consistent estimators. The most naive version of the estimators is difficult to implement in practice, as it requires to take into account paths reaching arbitrarily far outside of the considered sampling window. Therefore, we establish sufficient conditions under which the estimators remain consistent in the setting where only a small amount of plus-sampling of the sampling window is required. Additionally, we illustrate how the conditions can be verified in the case of mollified Poisson relative neighborhood graphs, which have recently been used to model microstructures in solid oxide fuel cells \citep{n.2016}. Realizations of this model are simulated and the estimators of geodesic tortuosity and constrictivity are computed in order to illustrate the theoretical results. 

The paper is organized as follows. In Section~\ref{DefSec}, the notions of geodesic tortuosity and constrictivity as well as their estimators are defined in the framework of random closed sets. Moreover, in this section, the main results are presented. The corresponding proofs are given in Sections~\ref{measureSec} and \ref{EstSec}. In Section~\ref{EdgeEffects}, the influence of edge effects on the estimation of geodesic tortuosity and constrictivity is analyzed for a certain class of random closed sets using percolation theory. Finally, numerical simulations illustrate the theoretical results in Section~\ref{Numerics}.

\section{Definitions and main results}
\label{DefSec}
The proofs of the results stated in this section are postponed to Section~\ref{EstSec}.

\subsection{Preliminaries}
We consider the $d$-dimensional Euclidean space, $d \ge 2$. For each $\vB \subset \R^{d}$ the interior, the closure and the boundary of $\vB$ are denoted by $\mathring{\vB}, \bar{\vB}$ and $\partial \vB,$ respectively. Let $\F$ and $\K$ denote the families of closed and compact sets in $\R^{d}$, respectively. Furthermore, let $\nu_{\vd}$ be the $\vd$-dimensional Lebesgue measure and $\H_{\vk}$ be the $\vk$-dimensional Hausdorff measure for $1\leq \vk \leq \vd$. By $$\sF=\sigma(\lbrace \vF \in \F: \vF \cap \vK \neq \emptyset \rbrace: \vK \in \K \rbrace)$$ we denote the Borel $\sigma$-algebra with respect to the Fell topology \citep{fell.1962} on $\F$. The open and closed balls with radius $r > 0$ centered at $\vx \in \R^d$ are denoted by $b(x,r)$ and $B(x,r)$, respectively. By $\vWpI = \lbrace \vx \in \R^{\vd}: \vx_{\vd} = 0 \rbrace$ and $\vWpO{} = \lbrace \vx \in \R^{\vd}: \vx_{\vd} = 1 \rbrace$ we denote the hyperplanes orthogonal to the $d$-th standard unit vector $\vv = (0, \ldots, 0, 1)$ at distance $0$ and $1$ to the origin. The \emph{set of all paths going from $\vx \in \R^d$ to $\vF_1 \in \F$} via the interior of $\vF_{0} \in \F$ is denoted by
$$ \PP_{\vF_{0}}(x, \vF_{1}) = \lbrace \vf:\,[0,1] \longrightarrow \mathring{\vF_{0}} \text{ Lipschitz}: \vf(0) = x, \vf(1) \in \vF_{1}\rbrace.$$
Note that the Lipschitz condition ensures that paths are rectifiable, see \citep[p.~251]{federer.1969}.
Finally, $$\mathcal{C}_{F_0}(F_1)=\lbrace \vx \in \R^d : \PP_{F_0}(x , F_1) \neq \emptyset \rbrace$$
denotes the set of all points connected to a closed set  $F_1 \in \mc{F}$ through $\mathring{F_0}$. In other words, $\mathcal{C}_{F_0}(F_1)$ describes the \emph{union of the connected components of $F_0$ intersecting $F_1$}. Throughout the paper, we consider a complete probability space $(\vOmega, \A, \P)$, which is not further specified.

\subsection{Definitions}
\subsubsection{Mean geodesic tortuosity}
\label{DefTauSec}
The notion of mean geodesic tortuosity for stationary random closed sets quantifies the windedness of directional paths through the considered sets. Among the various concepts of tortuosity in materials science, the concept of mean geodesic tortuosity constitutes an important microstructure characteristic for the prediction of effective conductivity in multi-phase materials~\citep{stenzel.2016}. 

 In the following, we assume that the direction of transport is given by $e_d$. Tortuosity in other directions reduces to this setting after a suitable rotation of the underlying random closed set. Intuitively speaking, mean geodesic tortuosity of a stationary random closed set $\vXi$ in $\R^{d}$ is defined as the expectation of the length of the shortest path in $\mathring{\vXi}$ from the origin $o \in \mathbb{R}^{d}$ to the hyperplane $l\vWpO{}$ under the condition that at least one such path exists. For normalization, mean geodesic tortuosity is divided by $l$. An illustration of a shortest path is given in Figure~\ref{fig:tortuosity_sketch}, left. After rescaling $\Xi$ suitably by $\vl^{-1}$, i.e., considering $\vl^{-1} \vXi = \lbrace \vl^{-1} x: x \in \vXi \rbrace$, we may assume that $\vl = 1$ in the following.  
 
\begin{figure}[h]
	\begin{center}
		\includegraphics[width=0.48\textwidth]{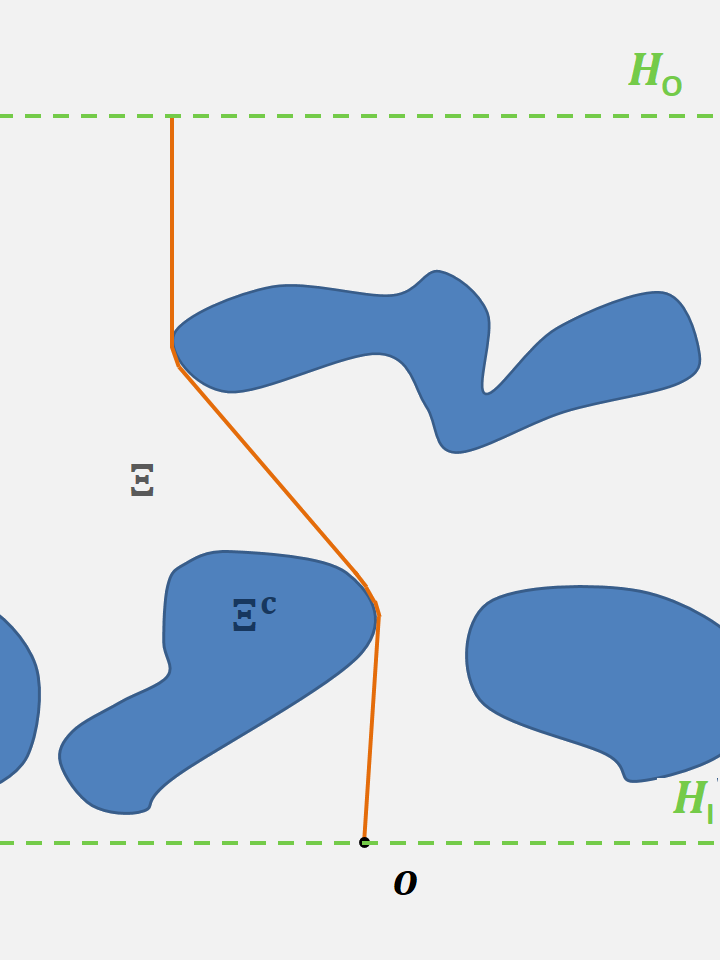}\quad
		\includegraphics[width=0.48\textwidth]{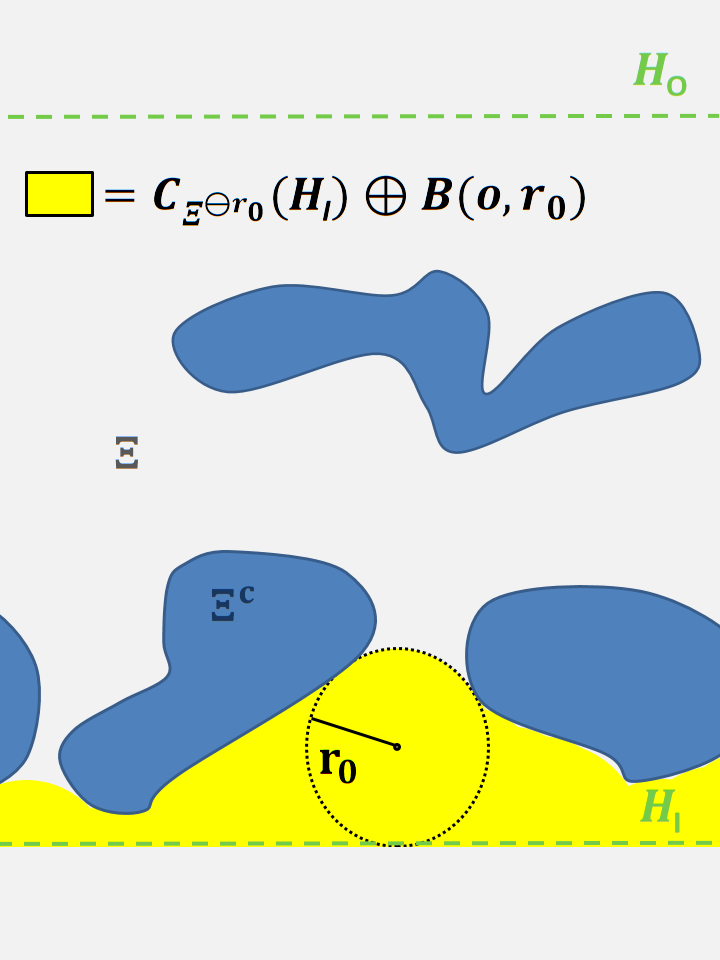}
	\end{center}
	
    \caption{Left: Visualization of the shortest path (orange) in $\Xi$ (gray) from the origin to $\vWpO{}$. To estimate the mean geodesic tortuosity  $\vtau$, the average length of all shortest paths going from $\vWpI$ to $\vWpO{}$ is considered. Right: Illustration of $\St_{\vXi^{\ominus \vr_0}}(\vWpI) \oplus B(o,\vr_0)$.
    }\label{fig:tortuosity_sketch}
\end{figure}

In order to define the mean geodesic tortuosity of a random closed set $\vXi$ in $\R^d$ rigorously, we need to be able to measure lengths of connection paths in $\vXi$ and determine whether different parts of $\vXi$ are contained in a common connected component. Recall that $\St_\Xi(F)$ describes the union of the connected components of $\vXi$ intersecting a set $F$. Furthermore, 
$$ \vgamma_{\vF}(x, \vT) = \inf_{f \in \PP_{\vF}(x , \vT)}  \H_1(\vf([0, 1]))$$
denotes the length of the shortest path contained in the interior of $\vF \in \F$ from $x$ to a target set $\vT \in \F$.

 Although we consider the paths in the interior of the random closed set for technical reasons, this definition is sensible for applications since transport through infinitely thin paths is not possible. In applications, it is sometimes assumed that paths are restricted in the half-space of non-negative last coordinate and many of our results carry over to this setting.

A random closed set is called $\R^{d-1}$-stationary if it is stationary with respect to shifts by vectors of the linear sub-space $\vWpI$.
\begin{definition}
    \label{def:def_tau}
    Let $\vXi$ be an $\R^{d-1}$-stationary random closed set. The \emph{mean geodesic tortuosity} of $\vXi$ is then defined by 
\begin{equation}\label{eq:def_tau}
\tau=\begin{cases}
    \, \E \left[ \vgamma_{\vXi}( o , \vWpO{}) \mid o \in \St_\vXi(\vWpO))\right]&
    \text{if } \P(o \in \St_\vXi(\vWpO))) > 0, \\
\infty & \text{otherwise.}
\end{cases}
\end{equation}
\end{definition}

Since, a priori, there are uncountably many paths, it is not clear that $\vgamma_{\vXi }( o, \vWpO{} )$ is a random variable and $\St_{\vXi}(\vWpO{} )$ is a measurable set. This is discussed in Proposition~\ref{prop:tau_welldef} and in the remark thereafter. Note that in the definition of $\tau$, it is sufficient to assume that $\vXi$ is $\R^{d-1}$-stationary. In case that $\vXi$ is $\R^{d-1}$-stationary, but not $\R^{d}$-stationary, $\tau$ is a local characteristic of $\vXi$. This can be relevant for the investigation of microstructures exhibiting a structural gradient. If $\vXi$ is additionally $\R^{d}$-stationary, $\tau$ is invariant under translations of the origin and thus $\tau$ is a global characteristic of $\vXi$.

\subsubsection{Constrictivity}
\label{DefBetaSec}
The notion of constrictivity of a stationary random closed set measures the strength bottleneck effects. This characteristic was introduced in materials science for tubes with periodically appearing bottlenecks in \citep{petersen.1958}, where constrictivity has been defined in dimension $\vd = 3$ as the ratio of the minimum and maximum area, through which transport goes. In \citep{petersen.1958} the minimum as well as the maximum area are circular areas with radii $\rmin{}{}$ and $\rmax$. Thus constrictivity is defined as $(\rmin{}{}/\rmax)^{2}$. Note that the concept of constrictivity can be transfered from simple geometries to complex microstructures \citep{holzer.2013b}, i.e.  $\rmin{}{}$ and $\rmax$ are defined for complex microstructures based on the concept of the continuous pore size distribution \citep{muench.2008}, which is directly related to the granulometry function of mathematical morphology \citep{matheron.1975}. As in Section \ref{DefTauSec} we assume that the transport direction is $e_d$. Constrictivity with respect to other transport directions can be reduced to this setting by a suitable rotation of the random closed set. When generalizing the concept of constrictivity to an arbitrary dimension, the definition changes to $(\rmin{}{}/\rmax)^{d-1}.$ The exponent $d-1$ appears in the definition of constrictivity as transport in $\R^{d}$ towards a predefined direction goes through $(d-1)-$dimensional cross-sections.

In the following $\vB^{\ominus \vr}$ denotes the \emph{erosion} $\vB \ominus B(o,r)$ of a set $\vB \subset \R^{d}$ by $B(o,r)$ for each $\vr > 0$. The Minkowski addition of two sets $\vB_{1}, \vB_{2} \subset \R^{\vd}$ is denoted by $\vB_{1} \oplus \vB_{2}$. To quantify bottleneck effects in a closed set $\vF \in \F$, we consider the set
$\St_{\vF^{\ominus \vr} }(\vWpI)$ consisting of all $\vx \in \vF$ such that $\vx$ can be reached by a path in the interior of $\vF^{\ominus r}$ starting at $\vWpI$. Then, $\St_{\vF^{\ominus \vr} }(\vWpI) \oplus B(o,r)$ is the subset of $\vF$ that can be filled by spheres starting from $\vWpI$, i.e. the centers of spheres are in $\vWpI$, and rolling freely in $\vF$. For an illustration, see Figure~\ref{fig:tortuosity_sketch}, right.

Next, for an $\R^{d-1}$-stationary random closed set $\vXi,$ by
  $$ r_{\mathsf{min}, \vl} = \sup \lbrace \vr \geq 0: \tfrac1l \, \E[\nu_{\vd} ((\St_{\vXi^{\ominus \vr}}(\vWpI) \oplus B(o,\vr)) \cap ([0,1]^{\vd-1} \times [0,\vl]))] \geq \tfrac12  \, \E[\nu_d(\vXi \cap [0, 1]^d)]\rbrace$$
  we denote the largest radius $r$ such that in expectation at least half  of $\vXi\cap ([0, 1]^{\vd-1} \times [0, l])$ can be filled by an intrusion of balls with radius $\vr$, i.e., by an intrusion from $\vWpI$  to $\vWpO)$. Note that the intrusion in transport direction determining $r_{\mathsf{min}, \vl},$ is strongly influenced by bottlenecks in $\vXi$.

  In order to obtain a quantity invariant under rescaling of $\vXi$, the radius $r_{\mathsf{min}, \vl}$ must be related to the overall thickness of $\vXi$. More precisely, writing 
  $\Psi_r(\Xi) = \Xi^{\ominus r} \oplus B(o, \vr)$
  for the \emph{opening of $\Xi$}  let
 $$\rmax = \sup\lbrace \vr \geq 0 : \, \E[\nu_d( \Psi_r(\vXi) \cap [0, 1]^d)] \geq \tfrac12 \E[\nu_d( \vXi\cap [0, 1]^d)] \rbrace$$
 denote the largest radius $\vr$ such that in expectation at least half of $\vXi$ can be covered by balls of radius $\vr$ entirely contained within $\vXi$.
 
 \begin{definition}
     \label{def:constr}
     Let $\vXi$ be an $\R^{d-1}$-stationary random closed set. The \emph{constrictivity} $\vbeta_{\vl}$ of $\vXi$ is then defined by
     $ \vbeta_{ \vl} = (r_{\mathsf{min}, \vl}/\rmax)^{d-1}.$
  \end{definition}
  Since the constrictivity $\vbeta_{\vl}$ of $\vXi$ is identical to the constrictivity $\vbeta = \vbeta_1$ of the scaled set $\vl^{-1}\vXi$, we only consider the case $\vl = 1$ from now on. Conceptually $\vbeta$ is a measure for the strength of bottleneck effects. Typically, there are many narrow constrictions in $\vXi$ if $\beta$ is close to 0, whereas if $\beta = 1$, then there are no constrictions at all.  If $\mathring{\vXi}$ is almost surely connected, then $\rmin{}{} \leq \rmax$ and $0 \leq \vbeta \leq 1$. Otherwise, it can happen $\rmin{}{} = -\infty$, as can be seen in the case where $\vXi$ is almost surely a union of disjoint sets of diameter smaller than 1.

\subsection{Construction of consistent estimators}
\label{estSec}
In the following, let $\vXi$ be an $\R^{d-1}$-stationary and ergodic random closed set. That is, $\vXi$ is stationary and ergodic with respect to the group of translations $\vTT=  \{\vTT_{x}\}_{ x \in \vWpI} $, where $\vTT_x:\Omega \longrightarrow \Omega$ is a $\P$-invariant mapping such that $\vXi(\vTT_{x} \omega) = \vXi(\omega) - x$. Now, we construct estimators for mean geodesic tortuosity and constrictivity of a random closed set observed in a bounded sampling window $\vWpp{}{\vN} = [-N/2, N/2]^{d-1} \times [0,1]$ for some integer $N > 0$.

\subsubsection{Mean geodesic tortuosity}

To estimate the mean geodesic tortuosity $\tau$, we consider paths starting in the window
$\vWpIpp{}{\vN} = \vWpI{} \cap \vWpp{}{\vN}.$
Then, we  define the estimator $\tauest{}{\vN}$ for mean geodesic tortuosity as
\begin{align*}
    \tauest{}{\vN} = \frac{1}{\H_{\vd-1}(\St_{\vXi}( \vWpO{})\cap \vWpIpp{}{\vN})} \int_{\St_\vXi(\vWpO))\cap \vWpIpp{}{\vN}} \vgamma_{\vXi}( \vx , \vWpO{\vl}) \, \mathcal{H}_{d-1}(\mathrm{d}\vx).
\end{align*}

We prove strong consistency of $\tauest{}{\vN}$ as $N \rightarrow \infty$ for $\R^{d-1}$-stationary and ergodic random closed sets under some moment condition of the shortest path-lengths.

\begin{theorem}\label{thm:consistency_tau1}
    Let $\E [\vgamma_{\vXi}(o, \vWpO{\vl}) \one_{o \in \St_\vXi( \vWpO{\vl})}] < \infty$. Then, $\tauest{}{\vN}$ defines a strongly consistent estimator of $\vtau$. That is, $\tauest{}{\vN}$ converges almost surely to $\vtau$ as $N \to \infty$.
\end{theorem}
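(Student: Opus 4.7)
The plan is to express the estimator as a ratio $\tauest{}{\vN} = A_{\vN}/B_{\vN}$ of two $(\vd-1)$-dimensional spatial averages over the planar window $\vWpIpp{}{\vN}$,
$$A_{\vN} = \int_{\vWpIpp{}{\vN}} \vgamma_\vXi(x,\vWpO{})\,\one_{x\in\St_\vXi(\vWpO{})}\,\H_{\vd-1}(\d x), \qquad B_{\vN} = \int_{\vWpIpp{}{\vN}} \one_{x\in\St_\vXi(\vWpO{})}\,\H_{\vd-1}(\d x),$$
and then to apply a multiparameter ergodic theorem to numerator and denominator separately. The crucial structural observation that makes this work is that both $\vWpI$ and $\vWpO{}$ are invariant under translations by vectors $x\in\vWpI$, since $\vWpI$ is a linear subspace and $\vWpO{}$ is a parallel affine hyperplane. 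Hence for every $x\in\vWpI$ one has the cocycle identities
$$\vgamma_\vXi(x,\vWpO{})(\omega) = \vgamma_\vXi(o,\vWpO{})(\vTT_{x}\omega), \qquad \one_{x\in\St_\vXi(\vWpO{})}(\omega) = \one_{o\in\St_\vXi(\vWpO{})}(\vTT_{x}\omega),$$
so that both integrands are values of a single $\omega$-functional sampled along the orbit of $\omega$ under the $\R^{\vd-1}$-action $\vTT$.

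Given integrability of $\vgamma_\vXi(o,\vWpO{})\one_{o\in\St_\vXi(\vWpO{})}$ from the hypothesis, boundedness of $\one_{o\in\St_\vXi(\vWpO{})}$, and measurability of both functionals from the proposition underpinning Definition~\ref{def:def_tau}, Wiener's multiparameter ergodic theorem applied on the cubes $\vWpIpp{}{\vN}$ (whose $(\vd-1)$-dimensional Hausdorff measure equals $\vN^{\vd-1}$) delivers, almost surely,
$$\frac{A_{\vN}}{\vN^{\vd-1}} \longrightarrow \E\bigl[\vgamma_\vXi(o,\vWpO{})\,\one_{o\in\St_\vXi(\vWpO{})}\bigr], \qquad \frac{B_{\vN}}{\vN^{\vd-1}} \longrightarrow \P\bigl(o\in\St_\vXi(\vWpO{})\bigr).$$
When $\P(o\in\St_\vXi(\vWpO{}))>0$, dividing yields $\tauest{}{\vN}\to\vtau$ almost surely, matching the case distinction in \eqref{eq:def_tau}. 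The degenerate regime $\P(o\in\St_\vXi(\vWpO{}))=0$, for which $\vtau=\infty$, is handled by noting through Fubini and $\R^{\vd-1}$-stationarity that $\H_{\vd-1}(\St_\vXi(\vWpO{})\cap\vWpIpp{}{\vN})$ has vanishing expectation, hence equals zero $\P$-almost surely, so the estimator is not defined for any $\vN$ and there is nothing to prove.

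I do not anticipate a serious obstacle beyond producing the cocycle identities, which are the only point at which the specific geometry of the source and target hyperplanes enters. Once they are in place, the remainder is a textbook two-fold application of the spatial ergodic theorem followed by a ratio limit. In particular, no path-length truncation and no control on paths that leave the sampling window is required at this stage, since $\tauest{}{\vN}$ evaluates the intrinsic geodesic $\vgamma_\vXi$ in all of $\mathring{\vXi}$ and only restricts the \emph{starting} points to the window. The question of replacing $\vgamma_\vXi$ by a window-based surrogate, which is the step actually needed for applications, is postponed to the edge-effect analysis of Section~\ref{EdgeEffects}.
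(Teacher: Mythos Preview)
Your proposal is correct and follows essentially the same approach as the paper: both rewrite $\tauest{}{\vN}$ as a ratio of spatial averages over $\vWpIpp{}{\vN}$, use the translation identity $\vgamma_{\vXi}(x,\vWpO{}) = \vgamma_{\vXi-x}(o,\vWpO{})$ (equivalently your cocycle formulation via $\vTT_x$), and apply the multiparameter ergodic theorem to numerator and denominator separately before dividing. The paper's version is terser and does not spell out the degenerate case $\P(o\in\St_\vXi(\vWpO{}))=0$, which you handle explicitly.
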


Using the estimator $\tauest{}{\vN}$ requires information about the length of all shortest paths from  $\vWpIpp{\vl}{\vN}$ to $\vWpO{\vl}$ through $\vXi$. In practice, $\vXi$ is observed in a bounded sampling window, which does not necessarily contain all shortest paths that are required to compute $\tauest{}{\vN}$. Thus we consider a further estimator for $\tau$. For estimating mean geodesic tortuosity based on a bounded sampling window, we observe paths from $\vWpIpp{\vl}{\vN}$ to $\vWpO{\vl}$ going through the dilated window
$$\vWpp{\valpha}{\vN}  = \vWpp{}{\vN} \oplus ([-\vN^{\valpha}, \vN^{\valpha}]^{\vd - 1} \times [-\vN^{\valpha},0])$$ 
for some $\valpha > 0$. Concerning all paths going through a dilated sampling window reduces the edge effects. Indeed, we also take paths from $\vWpIpp{\vl}{\vN}$ to $\vWpO{}$ into account leaving $\vWpp{}{\vN}$, which tend to be longer than paths completely contained in $\vWpp{}{\vN}$.

Hence, we consider the estimator
\begin{equation}\label{eq:tau_est_alpha}
    \tauest{\valpha}{\vN} = \frac{1}{\H_{\vd-1}(\St_{\vXi\cap \vWpp{\valpha}{\vN}}( \vWpO{})\cap \vWpIpp{}{\vN})} \int_{\St_{\vXi \cap \vWpp{\valpha}{\vN}}( \vWpO{})\cap \vWpIpp{}{\vN}} \vgamma_{\vXi\cap \vWpp{\valpha}{\vN}}(x, \vWpO{}) \, \mathcal{H}_{d-1}(\mathrm{d}\vx).
\end{equation}

In contrast to $\tauest{}{\vN},$ estimation of $\tau$ by means of $\tauest{\valpha}{\vN}$ takes those shortest paths into account, which are completely contained in the extended sampling window $\vWpp{\valpha}{\vN}$. Under some further assumptions regarding the shortest paths in $\vXi$ we obtain a result on consistency of the estimator $\tauest{\valpha}{\vN}$. Therefore, we define 
$\vRpp{\valpha}{\vN} =\{\vgamma_{\vXi}(x, \vWpO{}) =\vgamma_{\vXi\cap \vWpp{\valpha}{\vN}}(x, \vWpO{}) \text{ for all $x \in \vWpIp{\vN} \cap \Q^d$}\}  $
as the event that all shortest paths going from $\vWpIp{\vN}$ to $\vWpO{l}$ are completely contained in  $\vWpp{\valpha}{\vN}$, where $\Q$ denotes the set of rational numbers.

\begin{corollary}\label{thm:consistency_tau2}
        Let $\E [\vgamma_{\vXi}(o, \vWpO{\vl}) \one_{o \in \St_\vXi( \vWpO{\vl})}] < \infty$. Then, the following statements are true:
        \begin{enumerate}
            \item If there exists an almost surely finite random variable $\vN_0 \ge 1$ such that the event $\vRpp{\valpha}{\vN}$ occurs for all $\vN \ge \vN_0$, then the estimator $\tauest{\valpha}{\vN}$ is strongly consistent as $N \rightarrow \infty$.
            \item If $\lim_{N \rightarrow \infty}\P(\vRpp{\valpha}{\vN})=1$, then the estimator  $\tauest{\valpha}{\vN}$ is weakly consistent as $N \rightarrow \infty$.
        \end{enumerate}
\end{corollary}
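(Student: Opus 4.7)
The core observation is that on the event $\vRpp{\valpha}{\vN}$ the truncated estimator $\tauest{\valpha}{\vN}$ and the ideal estimator $\tauest{}{\vN}$ coincide almost surely, reducing both parts of the corollary to Theorem~\ref{thm:consistency_tau1}. The plan is to establish this identification and then read off the two consistency claims.

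\textbf{Step 1: Identification of the two estimators on $\vRpp{\valpha}{\vN}$.} I first claim that on $\vRpp{\valpha}{\vN}$ the pointwise equality
\[
\vgamma_{\vXi}(x, \vWpO{}) = \vgamma_{\vXi \cap \vWpp{\valpha}{\vN}}(x, \vWpO{})
\]
holds for $\H_{\vd-1}$-almost every $x \in \vWpIpp{}{\vN}$, and consequently that the integration domains $\St_\vXi(\vWpO{}) \cap \vWpIpp{}{\vN}$ and $\St_{\vXi \cap \vWpp{\valpha}{\vN}}(\vWpO{}) \cap \vWpIpp{}{\vN}$ agree up to an $\H_{\vd-1}$-null set. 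By definition of $\vRpp{\valpha}{\vN}$ the displayed equality holds on the countable dense set $\vWpIpp{}{\vN} \cap \Q^\vd$. To propagate it to a full-measure set, I would use a Lipschitz-type estimate for the shortest-path length in its starting argument: concatenating a short straight segment inside $\mathring{\vXi}$ with a near-minimiser from $x$ yields $|\vgamma_\vXi(y, \vWpO{}) - \vgamma_\vXi(x, \vWpO{})| \le |x-y|$ whenever such a segment exists, and the analogous bound holds with $\vXi$ replaced by $\vXi \cap \vWpp{\valpha}{\vN}$. A strict inequality $\vgamma_\vXi(x, \vWpO{}) < \vgamma_{\vXi \cap \vWpp{\valpha}{\vN}}(x, \vWpO{})$ at some $x$ would therefore propagate, by these two-sided bounds, to any sufficiently close rational $x'$ satisfying the segment condition, contradicting $\vRpp{\valpha}{\vN}$. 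Since the segment condition is met for a conull set of starting points, equality holds almost everywhere, and the coincidence of the integration domains follows by comparing the sets of points at which $\vgamma$ is finite.

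\textbf{Step 2: Deduction of the two parts.} Under the assumption of part~(i) the identification from Step~1 yields $\tauest{\valpha}{\vN} = \tauest{}{\vN}$ for every $N \ge \vN_0$ almost surely, so strong consistency transfers directly from Theorem~\ref{thm:consistency_tau1}. For part~(ii) I would combine Step~1 with the elementary bound
\[
\P\bigl(|\tauest{\valpha}{\vN} - \vtau| > \eps\bigr) \le \P\bigl((\vRpp{\valpha}{\vN})^c\bigr) + \P\bigl(|\tauest{}{\vN} - \vtau| > \eps\bigr),
\]
in which the first summand tends to zero by hypothesis and the second because almost sure convergence of $\tauest{}{\vN}$ to $\vtau$ implies convergence in probability.

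The main obstacle is Step~1, namely the passage from the countably many rational starting points---where the path-length equality is built into the definition of $\vRpp{\valpha}{\vN}$---to a set of full $\H_{\vd-1}$-measure. Once this passage is secured, both consistency statements follow by essentially routine probabilistic arguments together with Theorem~\ref{thm:consistency_tau1}.
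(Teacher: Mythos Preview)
Your approach is essentially the paper's: identify $\tauest{\valpha}{\vN}$ with $\tauest{}{\vN}$ on $\vRpp{\valpha}{\vN}$ and then invoke Theorem~\ref{thm:consistency_tau1}. Your Step~2 is verbatim the paper's argument, and the paper simply asserts the identification in Step~1 without further comment.

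Where you go beyond the paper is in recognising that $\vRpp{\valpha}{\vN}$ is formulated only for rational starting points, so the equality of the two integrals is not literally immediate. Your local-Lipschitz argument to pass from $\vWpIpp{}{\vN}\cap\Q^d$ to $\H_{\vd-1}$-almost every point is the natural way to close this gap: every $x\in\St_\vXi(\vWpO{})\cap\vWpIpp{}{\vN}$ lies in $\mathring{\vXi}$ and, since $\vWpIpp{}{\vN}\subset\mathring{\vWpp{\valpha}{\vN}}$, also in $\mathring{(\vXi\cap\vWpp{\valpha}{\vN})}$, so both path-length functionals are $1$-Lipschitz near $x$ and a strict gap would propagate to a nearby rational. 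This is exactly the mechanism behind Lemma~\ref{lem:semicontinuity}, so your Step~1 is consistent with the paper's own toolkit even though the paper does not spell it out here.
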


\subsubsection{Constrictivity}
In order to estimate the constrictivity $\beta_{ }$, we introduce estimators for $\rmax$ and $\rmin_{}$. In particular, for $\rmax$, we define the estimator

\begin{align*}
    \estrmax{N}^{} = \sup \left\lbrace \vr \geq 0: 2\nu_{d}\left(\Psi_r(\vXi) \cap \vWpp{}{\vN}\right) \geq  \nu_{d}\left( \vXi \cap\vWpp{}{\vN}\right) \right\rbrace.
\end{align*}

\begin{theorem}\label{thm:consistency_rmax}
    If there exists at most one $r_0 > 0$ with
\begin{equation}\label{eq:assumption_consistency_rmax}
    2 \, \E[\nu_{d}( \Psi_{r_0}(\vXi) \cap \vWpp{}{1})] = \E[\nu_d(\vXi \cap \vWpp{}{1})],
\end{equation} 
    then the estimator $\estrmax{\vN}$ is strongly consistent as $N \rightarrow \infty$.
    \end{theorem}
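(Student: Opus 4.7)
The plan is to reduce the statement to a monotone level‑crossing problem in the variable $\vr$ for random and deterministic decreasing functions, and then resolve it via a pointwise application of the multidimensional Birkhoff ergodic theorem. Introduce the auxiliary functions
\begin{equation*}
G(\vr)= 2\,\E[\nu_{\vd}(\Psi_{\vr}(\vXi)\cap\vWpp{}{1})] - \E[\nu_{\vd}(\vXi\cap\vWpp{}{1})],\qquad \widehat g_{\vN}(\vr)= 2\,\nu_{\vd}(\Psi_{\vr}(\vXi)\cap\vWpp{}{\vN}) - \nu_{\vd}(\vXi\cap\vWpp{}{\vN}),
\end{equation*}
so that $\rmax=\sup\{\vr\ge 0:G(\vr)\ge 0\}$ and $\estrmax{\vN}=\sup\{\vr\ge 0:\widehat g_{\vN}(\vr)\ge 0\}$. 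A direct geometric check (given $x\in B(z,\vr')\subseteq\vXi$ and $\vr<\vr'$, the ball $B(z',\vr)$ with $z'=z+\tfrac{\vr'-\vr}{\vr'}(x-z)$ lies in $\vXi$ and contains $x$) shows that $\Psi_{\vr}$ is antitonic in $\vr$, so both $G$ and $\widehat g_{\vN}$ are decreasing in $\vr$.

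For each fixed $\vr\ge 0$, the random field $\vz\mapsto\nu_{\vd}(\Psi_{\vr}(\vXi)\cap(\vWpp{}{1}+(\vz,0)))$ indexed by $\vz\in\R^{\vd-1}$ is $\R^{\vd-1}$-stationary by translation equivariance of the opening, ergodic as a functional of the ergodic set $\vXi$, and bounded by $1$; the same holds with $\Psi_{\vr}$ removed. The multidimensional Birkhoff ergodic theorem therefore yields $\widehat g_{\vN}(\vr)/\vN^{\vd-1}\to G(\vr)$ almost surely as $\vN\to\infty$ for every fixed $\vr$. Picking a countable dense set $D\subset[0,\infty)$, I pass to a single probability‑one event on which this convergence holds simultaneously for all $\vr\in D$.

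The final step turns the uniqueness hypothesis into strict sign information for $G$. Because $G$ is decreasing and left‑continuous (from the set identity $\bigcap_{\vr<\vr_{0}}\Psi_{\vr}(\vXi)=\Psi_{\vr_{0}}(\vXi)$ together with dominated convergence), one has $G(\rmax)\ge 0$ and $G(\vr)<0$ for all $\vr>\rmax$. If some $\vr_{1}\in[0,\rmax)$ satisfied $G(\vr_{1})=0$, monotonicity would force $G\equiv 0$ on the whole interval $[\vr_{1},\rmax]$, contradicting the assumed uniqueness of the zero. Hence $G>0$ on $[0,\rmax)$. On the good event, for $\vr\in D$ with $\vr>\rmax$ one has $\widehat g_{\vN}(\vr)<0$ eventually, so $\estrmax{\vN}<\vr$; for $\vr\in D$ with $\vr<\rmax$ one has $\widehat g_{\vN}(\vr)>0$ eventually, so $\estrmax{\vN}\ge\vr$. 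Letting $\vr$ approach $\rmax$ along $D$ from above and from below delivers $\estrmax{\vN}\to\rmax$ almost surely. The main obstacle is this monotonicity‑with‑uniqueness step: the ergodic step is routine once translation equivariance of $\Psi_{\vr}$ is invoked, but one must use the \emph{at most one} qualifier carefully to exclude plateaus of $G$ at level zero, allowing for the possibility that $G$ has a jump discontinuity from the right at $\rmax$.
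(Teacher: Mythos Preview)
Your proof is correct and follows essentially the same approach as the paper: define the centered difference function, apply the ergodic theorem pointwise in $r$, and use the uniqueness assumption to ensure strict signs of the limit on either side of $\rmax$. You are more explicit than the paper in justifying why pointwise a.s.\ convergence of $\widehat g_{\vN}(r)/N^{d-1}$ implies convergence of the level‑crossing point, invoking monotonicity in $r$ and a countable dense set; the paper compresses this into the single sentence ``it is sufficient to show that $\liminf_N U_N(r)>0$ for $r<\rmax$ and $\limsup_N U_N(r)<0$ for $r>\rmax$.''
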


To estimate $\rmin{}{}$, we put 
$ \vZ_{\vXi,\vr} = \St_{\vXi^{\ominus \vr}}(\vWpI) \oplus B(o,\vr)$
and define the estimator
\begin{align*}
\rminest{}{}{\vN}{} = \sup \lbrace \vr \geq 0: 2\nu_{\vd}(\vZ_{\vXi,\vr}  \cap\vWpp{}{\vN}) \geq \nu_{\vd}(\vXi \cap \vWpp{}{\vN}) \rbrace,
\end{align*}

\begin{theorem}\label{thm:consistency_rmin}
	If there exists at most one $r_0 > 0$ with
	\begin{equation}\label{eq:assumption_consistency_rmin}
        2 \, \E[\nu_{d}( \vZ_{\vXi,\vr_0} \cap \vWpp{}{1})]= \E[\nu_d( \vXi \cap \vWpp{}{1})],
	\end{equation} 
    then the estimator $\rminest{}{}{\vN}{}$ is strongly consistent as $N \rightarrow \infty$.
\end{theorem}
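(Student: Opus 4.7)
The plan is to mirror the proof of Theorem \ref{thm:consistency_rmax}: combine the multiparameter ergodic theorem at each fixed radius $\vr$ with the uniqueness hypothesis \eqref{eq:assumption_consistency_rmin} to transfer pointwise limits of the random functional $\vr \mapsto 2\nu_\vd(\vZ_{\vXi,\vr}\cap\vWpp{}{\vN})-\nu_\vd(\vXi\cap\vWpp{}{\vN})$ into convergence of its sup-type level set.

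First, I would verify that $\vZ_{\vXi,\vr}$ is, for each fixed $\vr\geq 0$, a random closed set with $\R^{\vd-1}$-stationary law, since erosion, the component functional $\St_{\cdot}(\vWpI)$, and Minkowski dilation all commute with shifts parallel to $\vWpI$. The multiparameter ergodic theorem, applied to the additive functional $\vA\mapsto\nu_\vd(\vA\cap\vWpp{}{\vN})$, then yields almost surely, for every fixed $\vr\geq 0$,
\begin{align*}
\vN^{-(\vd-1)}\,\nu_\vd(\vZ_{\vXi,\vr}\cap\vWpp{}{\vN}) &\longrightarrow \mu(\vr):=\E[\nu_\vd(\vZ_{\vXi,\vr}\cap\vWpp{}{1})],\\
\vN^{-(\vd-1)}\,\nu_\vd(\vXi\cap\vWpp{}{\vN}) &\longrightarrow \mu_\vXi:=\E[\nu_\vd(\vXi\cap\vWpp{}{1})].
\end{align*}
Writing $\vphi(\vr):=2\mu(\vr)-\mu_\vXi$ and $\vphi_\vN(\vr):=2\nu_\vd(\vZ_{\vXi,\vr}\cap\vWpp{}{\vN})-\nu_\vd(\vXi\cap\vWpp{}{\vN})$, the definitions read $\rmin{}{}=\sup\{\vphi\geq 0\}$ and $\rminest{}{}{\vN}{}=\sup\{\vphi_\vN\geq 0\}$.

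Next, I would exploit the uniqueness-of-zero hypothesis together with a one-sided continuity of $\vr\mapsto\mu(\vr)$---which comes from the monotone convergence $\vXi^{\ominus\vr_\vn}\downarrow\vXi^{\ominus\vr}$ as $\vr_\vn\uparrow\vr$ together with dominated convergence on the bounded window $\vWpp{}{1}$---to conclude that $\vphi(\rmin{}{}-\eps)>0$ and $\vphi(\rmin{}{}+\eps)<0$ for every $\eps>0$. Pointwise ergodic convergence at the countable collection $\{\rmin{}{}\pm 1/\vk:\vk\in\N\}$ then gives, outside a single null set and eventually in $\vN$, $\vphi_\vN(\rmin{}{}-1/\vk)\geq 0$ and $\vphi_\vN(\rmin{}{}+1/\vk)<0$. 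The former delivers $\rminest{}{}{\vN}{}\geq\rmin{}{}-1/\vk$ directly; for the reverse inequality, I would use the sandwich $\vZ_{\vXi,\vr}\subseteq\Psi_\vr(\vXi)$ together with the monotonicity of the opening to obtain
\[
\sup_{\vr\geq\rmin{}{}+1/\vk}2\nu_\vd(\vZ_{\vXi,\vr}\cap\vWpp{}{\vN})\leq 2\nu_\vd(\Psi_{\rmin{}{}+1/\vk}(\vXi)\cap\vWpp{}{\vN}),
\]
reducing the uniform-in-$\vr$ control to a single-radius ergodic statement and then letting $\vk\to\infty$ to conclude $\rminest{}{}{\vN}{}\to\rmin{}{}$ a.s.

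The main obstacle lies precisely in this uniform-in-$\vr$ bound. The opening $\Psi_\vr(\vXi)$ is monotone in $\vr$, but $\vZ_{\vXi,\vr}$ is not---enlarging $\vr$ shrinks the set of reachable centres while enlarging each Minkowski ball---so pointwise convergence at countably many radii does not, on its own, preclude spurious large-$\vr$ zero crossings of $\vphi_\vN$. The monotone envelope $\Psi_\vr(\vXi)$ resolves this, provided it delivers a strictly negative limit at $\rmin{}{}+1/\vk$. When $\rmin{}{}=\rmax$ this is automatic from the argument for Theorem \ref{thm:consistency_rmax}; when $\rmin{}{}<\rmax$ strictly, an additional input from \eqref{eq:assumption_consistency_rmin} specifically for $\vZ$ is needed to close the gap on $(\rmin{}{},\rmax]$, and I expect this interplay between the morphological chain $\vZ\subseteq\Psi\subseteq\vXi$ and the hypothesis to be the technical heart of the proof.
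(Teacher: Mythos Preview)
Your overall plan mirrors the paper's: establish $\R^{d-1}$-stationarity and ergodicity of $Z_{\Xi,r}$, apply the individual ergodic theorem at each fixed $r$, and combine with the uniqueness hypothesis to push pointwise limits to convergence of the supremum. The paper's own proof consists of exactly these two observations and a reference back to Theorem~\ref{thm:consistency_rmax}.

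The substantive error in your proposal is the claim that $r\mapsto Z_{\Xi,r}$ is not monotone. In fact $Z_{\Xi,r_2}\subset Z_{\Xi,r_1}$ whenever $r_1<r_2$; the paper uses this inclusion explicitly in the proof of Corollary~\ref{cor:consistency_rmin_weaker_assumption}. To see it, take $x=y+b\in Z_{\Xi,r_2}$ with $y\in\mathcal C_{\Xi^{\ominus r_2}}(H_{\mathsf I})$ and $|b|\le r_2$, and set $y'=y+(1-r_1/r_2)b$. Then $|x-y'|\le r_1$, and since $y$ lies in the interior of $\Xi^{\ominus r_2}$ one has an open ball $b(y,r_2-r_1+\delta)\subset\Xi^{\ominus r_1}$ for some $\delta>0$; this ball contains both $y$ and $y'$, is connected, and inherits the connection of $y$ to $H_{\mathsf I}$ through $\mathring{(\Xi^{\ominus r_1})}$, whence $y'\in\mathcal C_{\Xi^{\ominus r_1}}(H_{\mathsf I})$ and $x\in Z_{\Xi,r_1}$. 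Intuitively, a smaller ball can be rolled along the same track as a larger one, and its center can be pushed slightly toward $x$ to cover it.

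With this monotonicity in hand, $\varphi_N(r)$ is nonincreasing in $r$ for every $N$, so $\varphi_N(r)<0$ at a single radius $r>r_{\min}$ already forces $\widehat r_{\min,N}\le r$. Your detour through the envelope $\Psi_r(\Xi)$ is therefore unnecessary, and---as you correctly diagnosed---would not close the gap on $(r_{\min},r_{\max}]$ anyway. The appeal to one-sided continuity of $\mu$ is also superfluous: monotonicity of $\varphi$ together with the uniqueness hypothesis already yields $\varphi(r)>0$ for $r<r_{\min}$ (a plateau at zero would produce infinitely many zeros) and $\varphi(r)<0$ for $r>r_{\min}$, which is all that the argument of Theorem~\ref{thm:consistency_rmax} requires.
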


In practice, it might be difficult to verify Conditions (\ref{eq:assumption_consistency_rmax}) and (\ref{eq:assumption_consistency_rmin}) for a given random closed set. Thus, we present a further more accessible sufficient condition.

\begin{corollary}\label{cor:consistency_rmin_weaker_assumption}
	Assume that
	\begin{equation}\label{eq:weaker_assumption_consistency_rmin}
        \P( o \in \vXi^{\ominus \vr_1} \setminus \vPsi_{r_2}(\vXi)) > 0
	\end{equation} 
	\begin{enumerate}
		\item for all $0 < \vr_{1} < \rmax < \vr_{2}.$
		Then, Condition \eqref{eq:assumption_consistency_rmax} is satisfied.
		\item for all $0 < \vr_{1} < \rmin{}{} < \vr_{2}.$
		Then, Condition \eqref{eq:assumption_consistency_rmin} is satisfied.
	\end{enumerate}
	
\end{corollary}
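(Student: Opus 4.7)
The plan is a contrapositive argument for both parts. For (i), suppose Condition \eqref{eq:assumption_consistency_rmax} fails, so there exist $0 < r_0^{(1)} < r_0^{(2)}$ with $f(r_0^{(1)}) = f(r_0^{(2)}) = c$, where $f(r) := \E[\nu_d(\Psi_r(\Xi) \cap \vWpp{}{1})]$ and $c := \tfrac12 \E[\nu_d(\Xi \cap \vWpp{}{1})]$. My aim is to produce a pair $r_1 < \rmax < r_2$ for which $\P(o \in \Xi^{\ominus r_1} \setminus \Psi_{r_2}(\Xi)) = 0$, contradicting \eqref{eq:weaker_assumption_consistency_rmin}.

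First I would record that $r \mapsto \Psi_r(\Xi)$ is non-increasing, from the identification $\Psi_r(\Xi) = \bigcup\{B(y,r) : B(y,r) \subseteq \Xi\}$ together with a point-shifting argument: any ball of radius $r' \ge r$ contained in $\Xi$ witnesses the same point via a translated concentric sub-ball of radius $r$ inside $\Xi$. Hence $f$ is non-increasing, so $f \equiv c$ on $[r_0^{(1)}, r_0^{(2)}]$, which means $\Psi_{r_0^{(1)}}(\Xi) \setminus \Psi_{r_0^{(2)}}(\Xi)$ has expected Lebesgue measure zero in $\vWpp{}{1}$. Combined with the general inclusion $\Xi^{\ominus r} \subseteq \Psi_r(\Xi)$, this forces $\Xi^{\ominus r_0^{(1)}} \subseteq \Psi_{r_0^{(2)}}(\Xi)$ almost surely up to a Lebesgue-null subset of $\vWpp{}{1}$; moreover $\rmax \ge r_0^{(2)}$ is immediate from the definition.

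Setting $r_1 := r_0^{(1)}$ (so $r_1 < \rmax$) and letting $r_2 > \rmax$ be arbitrary, I would then transfer the a.s.\ containment above into pointwise vanishing at the origin. The main obstacle is that $\R^{d-1}$-stationarity fixes only horizontal coordinates, so a plain Fubini argument only provides vanishing at almost every vertical level $x_d \in [0,1]$, not at $x_d = 0$. To bridge this, I would exploit the topological regularity of the event: since $\Xi$ is closed, $\Xi^{\ominus r}$ and $\Psi_r(\Xi)$ are closed as well, and a neighborhood propagation argument shows that if $\P(o \in \Xi^{\ominus r_1} \setminus \Psi_{r_2}(\Xi)) > 0$ then, after slightly relaxing $r_1$ to $r_1 - \eta$, the set $\Xi^{\ominus (r_1 - \eta)} \setminus \Psi_{r_2}(\Xi)$ contains an $o$-centered random ball, which by $\R^{d-1}$-stationarity produces strictly positive expected Lebesgue volume in $\vWpp{}{1}$. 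Running this implication contrapositively against the vanishing expected-volume bound obtained in the previous step (applied with $r_2 \downarrow \rmax$, using left-continuity of $r \mapsto \Psi_r(\Xi)$) yields the contradiction.

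For part (ii) the same scheme applies with $\Psi_r(\Xi)$ replaced by $\vZ_{\Xi,r} := \St_{\Xi^{\ominus r}}(\vWpI) \oplus B(o,r)$ and $\rmax$ by $\rmin{}{}$. Two ingredients need rechecking: $r \mapsto \vZ_{\Xi,r}$ is non-increasing (since $\Xi^{\ominus r}$ shrinks as $r$ grows, any path through a smaller erosion reaching $\vWpI$ persists inside any larger erosion, so $\St_{\Xi^{\ominus r}}(\vWpI)$ shrinks; the same point-shifting argument then propagates monotonicity through the Minkowski dilation), and $\Xi^{\ominus r} \subseteq \vZ_{\Xi,r}$ via a straightforward reachability check. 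Since $\vZ_{\Xi,r} \subseteq \Psi_r(\Xi)$, the margin hypothesis (formulated with $\Psi_{r_2}$) is a fortiori a regularity condition on $\vZ_{\Xi,r}$, and the regularity transfer from pointwise probability to expected volume goes through verbatim, again being the crux of the argument.
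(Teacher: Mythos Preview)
There are two genuine gaps.

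First, your contrapositive does not close. If $f(r_0^{(1)})=f(r_0^{(2)})=c$ with $r_0^{(1)}<r_0^{(2)}$, then monotonicity of $f$ forces $r_0^{(2)}\le\rmax$, so the measure-zero information you extract concerns only the pair $(r_0^{(1)},r_0^{(2)})$ with $r_0^{(2)}\le\rmax$. To contradict the hypothesis you must exhibit a pair with $r_2>\rmax$; but for such $r_2$ one has $\Psi_{r_2}(\vXi)\subseteq\Psi_{r_0^{(2)}}(\vXi)$, so $\vXi^{\ominus r_0^{(1)}}\setminus\Psi_{r_2}(\vXi)$ is a \emph{superset} of the set you showed to be Lebesgue-null, and nothing follows. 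Your appeal to ``left-continuity as $r_2\downarrow\rmax$'' does not repair this: that is a right limit, and $r\mapsto\Psi_r(\vXi)$ is not right-continuous in general (take $\vXi=B(o,\rmax)$, where $\Psi_{\rmax}(\vXi)=\vXi$ but $\Psi_r(\vXi)=\emptyset$ for every $r>\rmax$).

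Second, in part~(ii) the claimed inclusion $\vXi^{\ominus r}\subseteq \vZ_{\vXi,r}$ is false: a point of $\vXi^{\ominus r}$ lying in a connected component that does not meet $\vWpI$ need not belong to $\St_{\vXi^{\ominus r}}(\vWpI)\oplus B(o,r)$, so there is no ``straightforward reachability check''. The paper avoids both pitfalls by arguing directly rather than by contrapositive, exploiting that $o\in\vWpI$: from $o\in\vXi^{\ominus r_1}$ one gets $o\in\St_{\vXi^{\ominus r_1}}(\vWpI)$ trivially (the constant path suffices), hence $B(o,r_1)\subset \vZ_{\vXi,r_1}$; and $o\notin\Psi_{r_2}(\vXi)\supseteq \vZ_{\vXi,r_2}$ together with closedness yields a ball around $o$ disjoint from $\vZ_{\vXi,r_2}$. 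The hypothesis then gives $\P\big(\nu_d((\vZ_{\vXi,r_1}\setminus \vZ_{\vXi,r_2})\cap W_1)>0\big)>0$, and a short conditioning argument delivers the strict inequality $\E[\nu_d(\vZ_{\vXi,r_2}\cap W_1)]<\E[\nu_d(\vZ_{\vXi,r_1}\cap W_1)]$ for $r_1<\rmin{}{}<r_2$.
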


For the usage of $\rminest{}{}{\vN}{}$ the complete information about $\vZ_{\vXi,\vr}$ is required for each $r > 0$. In applications, $\vXi$ is observed in a bounded sampling window. Then $\vZ_{\vXi,\vr}$ has to be determined based on the observation of $\vXi$. This is taken into account by the estimator \begin{equation}\label{eq:rmin_est_alpha}
\rminestedge{}{}{\vN}{} = \sup \lbrace \vr \geq 0: 2\nu_{\vd}(\vZ_{\vXi \cap \vWpp{\valpha}{\vN}, \vr}  \cap\vWpp{}{\vN}) \geq \nu_{\vd}(\vXi \cap \vWpp{}{\vN}) \rbrace, 
\end{equation} 
where $\vZ_{\vXi \cap \vWpp{\valpha}{\vN},\vr}$ is used instead of $\vZ_{\vXi,\vr}$ to estimate $\rminest{}{}{\vN}{}$, since $\vZ_{\vXi \cap \vWpp{\valpha}{\vN},\vr}$ can be determined based on a observation of $\vXi$ in $\vWpp{\valpha}{\vN}$. Under further assumptions on the connected components of $\vXi^{\ominus \vr}$ we obtain a consistency result of the estimator $\rminestedge{}{}{\vN}{2}$. Therefore, we define for each $r > 0$ the event 
$\vRminpp{\valpha}{\vN,r} = \{\St_{\vXi^{\ominus \vr}}(\vWpIp{\vN}) = \St_{\vXi^{\ominus \vr}\cap \vWpp{\valpha}{\vN}}(\vWpIp{\vN})\}$ 
that each $x \in \vXi^{\ominus \vr}$ can either be connected to $\vWpIp{\vN}$ by a path within $\vXi^{ \ominus \vr} \cap \vWpp{\valpha}{\vN}$ or is contained in a connected component, which does not intersect $\vWpIpp{}{\vN}.$

\begin{corollary}
    \label{constCor}
    Let Condition (\ref{eq:assumption_consistency_rmin}) be fulfilled. 
                    \begin{enumerate}
                        \item If there exists an almost surely finite random variable $\vN_0 \ge 1$ such that the event $\vRminpp{\valpha}{\vN,r}$ occurs for all $\vN \ge \vN_0$, then the estimator $\rminestedge{}{}{\vN}{}$ is strongly consistent as $N \rightarrow \infty$.
                        \item If $\lim_{N \rightarrow \infty}\P(\vRminpp{\valpha}{\vN})=1$, then the estimator $\rminestedge{}{}{\vN}{}$ is weakly consistent as $N \rightarrow \infty$.
                    \end{enumerate}
\end{corollary}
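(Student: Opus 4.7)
The plan is to reduce the statement to Theorem~\ref{thm:consistency_rmin} by showing that, on the favorable event $\vRminpp{\valpha}{\vN,\vr}$, the window-corrected quantity $\nu_{\vd}(\vZ_{\vXi\cap\vWpp{\valpha}{\vN},\vr}\cap\vWpp{}{\vN})$ agrees with the ideal $\nu_{\vd}(\vZ_{\vXi,\vr}\cap\vWpp{}{\vN})$ up to a boundary term of order $N^{\vd-2}$, which is negligible against the bulk order $N^{\vd-1}$ produced by the ergodic theorem. Once this is established, the sup–definition of $\rminestedge{}{}{\vN}{}$ can be analyzed by the same monotonicity argument used for the ideal estimator.

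The first step is the routine inclusion $\vZ_{\vXi\cap\vWpp{\valpha}{\vN},\vr}\subseteq \vZ_{\vXi,\vr}$, which is immediate since erosion is order–preserving and so is the connectivity functional $\St_{\cdot}(\vWpI)$ in its first argument. For the reverse inclusion restricted to $\vWpp{}{\vN}$, take $x\in\vZ_{\vXi,\vr}\cap\vWpp{}{\vN}$ and write $x=y+b$ with $y\in\St_{\vXi^{\ominus\vr}}(\vWpI)$ and $\|b\|\le \vr$. Then $y$ lies in $\vWpp{}{\vN}\oplus B(o,\vr)$, which sits well inside $\vWpp{\valpha}{\vN}$ as soon as $N$ is large. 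Assuming $y$ is actually connected inside $\vXi^{\ominus\vr}$ to $\vWpIp{\vN}$ (rather than only to a remote part of $\vWpI$), the defining property of $\vRminpp{\valpha}{\vN,\vr}$ guarantees that the connecting path can be taken inside $\vXi^{\ominus\vr}\cap \vWpp{\valpha}{\vN}$, so that $y\in\St_{(\vXi\cap\vWpp{\valpha}{\vN})^{\ominus\vr}}(\vWpI)$ and hence $x\in\vZ_{\vXi\cap\vWpp{\valpha}{\vN},\vr}$. Points of $\vZ_{\vXi,\vr}\cap\vWpp{}{\vN}$ whose only available connection goes to $\vWpI\setminus\vWpIp{\vN}$ are confined to an $O(1)$–tube along the lateral boundary of $\vWpp{}{\vN}$, a set of volume $O(N^{\vd-2})$.

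The second step applies the $\R^{\vd-1}$–ergodic theorem, exactly as in the proof of Theorem~\ref{thm:consistency_rmin}, to obtain for every fixed $\vr\ge 0$ the almost–sure convergence
\[
\frac{1}{N^{\vd-1}}\,\nu_{\vd}(\vZ_{\vXi,\vr}\cap\vWpp{}{\vN})\longrightarrow \E[\nu_{\vd}(\vZ_{\vXi,\vr}\cap\vWpp{}{1})],
\]
together with the analogous limit for $\nu_{\vd}(\vXi\cap\vWpp{}{\vN})$. Combined with the comparison of the previous paragraph, the map $\vr\mapsto 2\nu_{\vd}(\vZ_{\vXi\cap\vWpp{\valpha}{\vN},\vr}\cap\vWpp{}{\vN})-\nu_{\vd}(\vXi\cap\vWpp{}{\vN})$, normalized by $N^{\vd-1}$, converges to $\vr\mapsto 2\E[\nu_{\vd}(\vZ_{\vXi,\vr}\cap\vWpp{}{1})]-\E[\nu_{\vd}(\vXi\cap\vWpp{}{1})]$. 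By Condition (\ref{eq:assumption_consistency_rmin}) together with the morphological fact that $\vr\mapsto \vZ_{\vXi,\vr}$ is monotone decreasing in $\vr$, this limit changes sign exactly once, at $\vr=\rmin{}{}$. The sup–definition of $\rminestedge{}{}{\vN}{}$ then yields convergence to $\rmin{}{}$: almost surely under the ultimate–occurrence hypothesis of (i), and in probability under the hypothesis of (ii).

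The main obstacle is the subtle mismatch between $\vWpI$ (which enters the quantity $\vZ_{\vXi,\vr}$) and $\vWpIp{\vN}$ (which enters the event $\vRminpp{\valpha}{\vN,\vr}$); this is what forces the $O(N^{\vd-2})$ boundary-layer bookkeeping in Step~2. A secondary but manageable issue is that the favorable event depends on $\vr$, so one must either work at a single critical $\vr$ close to $\rmin{}{}$ and extend by monotonicity in $\vr$, or argue uniformity over a countable dense set of radii together with monotonicity — the same device already invoked for the estimator $\tauest{\valpha}{\vN}$ in Corollary~\ref{thm:consistency_tau2}.
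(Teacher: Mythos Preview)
Your overall route---identify the windowed estimator with the ideal one on the favorable event and then invoke Theorem~\ref{thm:consistency_rmin}---is precisely the paper's. The paper is far terser: it simply asserts that on $\vRminpp{\valpha}{\vN,r}$ the set $\vZ_{\vXi\cap\vWpp{\valpha}{\vN},r}$ coincides with $\vZ_{\vXi,r}$, so that $\rminestedge{}{}{\vN}{}=\rminest{}{}{\vN}{}$ for all $N\ge N_0$, and no boundary correction is written down at all. The $r$-dependence issue you flag in your last paragraph is likewise not addressed explicitly in the paper's three-line proof.

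The genuine gap in your version is the $O(1)$-tube claim in Step~1. You assert that any $x\in\vZ_{\vXi,r}\cap\vWpp{}{\vN}$ whose associated center $y\in\St_{\vXi^{\ominus r}}(\vWpI)$ connects only to $\vWpI\setminus\vWpIp{\vN}$ must lie within bounded distance of the lateral faces of $\vWpp{}{\vN}$. That is false without further structural hypotheses on $\vXi$: a connected component of $\vXi^{\ominus r}$ can contain a point $y$ arbitrarily deep inside $[-N/2,N/2]^{d-1}\times[0,1]$ and still meet the hyperplane $\{x_d=0\}$ only outside $[-N/2,N/2]^{d-1}$---picture a long tube that stays in the slab $0\le x_d\le 1$ and meanders sideways a distance of order $N$ before it first touches $\vWpI$. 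Hence the exceptional set need not have volume $O(N^{d-2})$, and your comparison of $\nu_d(\vZ_{\vXi\cap\vWpp{\valpha}{\vN},r}\cap\vWpp{}{\vN})$ with $\nu_d(\vZ_{\vXi,r}\cap\vWpp{}{\vN})$ is not justified as written. If you wish to retain a quantitative discrepancy estimate rather than the paper's bare identification of sets, you would need an ergodic-type argument bounding the fraction of $\vWpp{}{\vN}$ reachable from $\vWpI$ but not from $\vWpIp{\vN}$; a deterministic geometric confinement does not hold.
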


\section{Measurability}
\label{measureSec}
\subsection{Geodesic tortuosity}
\label{measureTauSec}
 In the following we show the well-definedness of geodesic tortuosity $\tau$ as formalized in the following result.

\begin{proposition}\label{prop:tau_welldef}
    The functions $F \mapsto \one\{o \in \St_{\vF}(\vWpO{})  \}$ and $F \mapsto \vgamma_{\vF}(o, \vWpO{})$ are $(\sigma_\F, \bar{\B})$-measurable. In particular, the conditional expectation $\vtau$ given in \eqref{eq:def_tau} is well defined.
\end{proposition}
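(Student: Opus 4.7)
The plan is to express both maps as countable Boolean or infimum operations on events of the form $\{|q| \subset \mathring{F}\}$, where $q$ ranges over a countable family of polygonal paths with rational vertices. Let $\mathcal{Q}$ denote the collection of finite tuples $q = (v_0, \ldots, v_n)$ with $v_0 = o$, $v_n \in \vWpO{} \cap \Q^d$, and $v_1, \ldots, v_{n-1} \in \Q^d$; for each such $q$ write $|q| := \bigcup_{i=0}^{n-1} [v_i, v_{i+1}]$ for its compact polygonal trace and $\ell(q) := \sum_{i=0}^{n-1} \|v_{i+1} - v_i\|$ for its polygonal length. The core analytical step to establish is the identity
\begin{equation*}
\vgamma_F(o, \vWpO{}) \;=\; \inf\bigl\{\ell(q) : q \in \mathcal{Q},\ |q| \subset \mathring{F}\bigr\},
\end{equation*}
with the convention $\inf \emptyset = +\infty$.

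The inequality ``$\leq$'' is immediate: every rational polygonal path $q$ with $|q| \subset \mathring{F}$ determines a piecewise linear, hence Lipschitz, member of $\PP_F(o, \vWpO{})$ satisfying $\H_1(|q|) \leq \ell(q)$. The opposite inequality is where the real work lies, and is the main obstacle I anticipate. Given $f \in \PP_F(o, \vWpO{})$ and $\eps > 0$, I would first extract an injective Lipschitz arc $g$ inside $f([0,1])$ joining $o$ to a point of $\vWpO{}$ (relying on the standard fact that any connected compact set of finite $\H_1$-measure contains an injective rectifiable arc between any two of its points); consequently $\mathrm{length}(g) = \H_1(g([0,1])) \leq \H_1(f([0,1]))$. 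Because $g([0,1])$ lies compactly in the open set $\mathring{F}$, there exists $\eta > 0$ with $g([0,1]) \oplus B(o, 2\eta) \subset F$. I would then choose a partition $0 = t_0 < \cdots < t_n = 1$ fine enough that the piecewise linear interpolant through $g(t_0), \ldots, g(t_n)$ has length within $\eps/2$ of $\mathrm{length}(g)$ and each arc $g([t_i, t_{i+1}])$ has diameter $< \eta/2$, and perturb each interior vertex to some $v_i \in \Q^d$ within $\min(\eta/2, \eps/(4n))$ of $g(t_i)$, and $g(1)$ to $v_n \in \vWpO{} \cap \Q^d$ similarly. A straightforward triangle-inequality estimate then shows each segment $[v_i, v_{i+1}]$ stays inside $B(g(t_i), 2\eta) \subset \mathring{F}$, while the rationalized length differs from $\H_1(f([0,1]))$ by at most $\eps$.

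For the measurability of each $\{F : |q| \subset \mathring{F}\}$ I would use that $|q|$ is compact to write
\begin{equation*}
\{F : |q| \subset \mathring{F}\} \;=\; \bigcup_{\eps \in \Q,\, \eps > 0} \{F : |q| \oplus B(o, \eps) \subset F\}.
\end{equation*}
For any compact $K$ and any countable dense subset $D \subset K$, closedness of $F$ yields $\{F : K \subset F\} = \bigcap_{x \in D} \{F : F \cap \{x\} \neq \emptyset\}$, and each factor lies in $\sF$ since singletons are compact. Assembling the pieces, $\vgamma_F(o, \vWpO{})$ equals the countable infimum $\inf_{q \in \mathcal{Q}} \bigl(\ell(q) \one\{|q| \subset \mathring{F}\} + \infty \cdot \one\{|q| \not\subset \mathring{F}\}\bigr)$ of $(\sF, \bar{\B})$-measurable functions and is therefore itself $(\sF, \bar{\B})$-measurable; the connectivity indicator $\one\{o \in \St_F(\vWpO{})\} = \one\{\vgamma_F(o, \vWpO{}) < \infty\}$ then inherits measurability, and well-definedness of the conditional expectation in \eqref{eq:def_tau} follows.
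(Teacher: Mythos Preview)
Your argument is correct and follows essentially the same route as the paper: both reduce $\gamma_F(o,\vWpO{})$ to a countable infimum over rational polygonal paths, after checking that events of the form $\{[x,y]\subset F\}$ (or, in your version, $\{|q|\subset\mathring F\}$) lie in $\sigma_\F$. Your presentation is somewhat more careful on two points the paper leaves implicit---the extraction of an injective arc before polygonal discretization (which cleanly handles the distinction between $\H_1(f([0,1]))$ and arc length), and the compactness argument producing a uniform tube $g([0,1])\oplus B(o,2\eta)\subset \mathring F$---but the underlying strategy is the same.
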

     \begin{remark}
    	In Section \ref{measureConstrSec}, it is shown that if $\Xi$ is a random closed set, then so is $\R^d \setminus \St_{\vXi}(\vWpI{})$. Note that this result can be analogously obtained for $\R^{d} \setminus \St_\Xi(\vWpO{})$. 
    \end{remark}
    
 That is, we show that $\vgamma_{\vXi}(o,\vS)$ is a random variable with values in $\R \cup \lbrace \infty \rbrace$ and that 
 $\{o \in \St_{\vXi}(\vS)\} \in \A$ for each random closed set $\vXi$. For this purpose, continuous paths from $o$ to $\vS$ through $\vXi_1$ are approximated by line segments, which is a common approach in the literature \cite{davis.2017}. In the following, we denote the line segment between $\vx$ and $\vy$ by
 $$[\vx,\vy]=\lbrace \vz=\vx+\va(\vy-\vx): \va \in [0,1]\rbrace,$$
 for all $\vx,\vy$ in $\R^{\vd}$.    
    
Since our proof of measurability of geodesic tortuosity relies on an approximation by line segments, we first show that the event that a line segment is contained in a random closed set is measurable.
\begin{lemma}\label{lem:straightline}
    Let $\vx, \vy \in \R^{\vd}.$ Define the mapping $\vpsi_{\vx, \vy}:\F \to \R \cup \lbrace -\infty \rbrace$ by
    $$ \vF \mapsto \begin{cases}
    |\vx-\vy|  & \text{if } [\vx,\vy] \subset \vF, \\
    -\infty  & \text{otherwise.}
    \end{cases}$$
    Then, $\vpsi_{x,y}$ is $(\sF, \bar{\B})$-measurable, where
    $\bar{\B}=\lbrace \vA \cup \vA': \vA \in \B(\R), \vA' \subset \lbrace -\infty, \infty \rbrace \rbrace \cup \B(\R).$
\end{lemma}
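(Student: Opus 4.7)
The map $\vpsi_{\vx,\vy}$ takes only the two values $|\vx-\vy|$ and $-\infty$, so checking $(\sF,\bar\B)$-measurability reduces to showing that the preimage of $\{|\vx-\vy|\}$, namely the event
$$ \vA_{\vx,\vy} := \{\vF \in \F : [\vx,\vy] \subset \vF\}, $$
lies in $\sF$; the four possible preimages $\emptyset$, $\vA_{\vx,\vy}$, $\F\setminus\vA_{\vx,\vy}$, $\F$ then cover all Borel subsets of $\{|\vx-\vy|,-\infty\}$. My plan is therefore to focus entirely on $\vA_{\vx,\vy}$.

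The key idea is to exploit that elements of $\F$ are closed, so membership of the whole compact segment $[\vx,\vy]$ in $\vF$ can be tested on a countable dense subset. Concretely, set
$$ \vD_{\vx,\vy} = \{\vx + \vq(\vy - \vx) : \vq \in \Q \cap [0,1]\}, $$
which is a countable dense subset of $[\vx,\vy]$. Since $\vF$ is closed, $[\vx,\vy] \subset \vF$ if and only if $\vD_{\vx,\vy} \subset \vF$. Hence
$$ \vA_{\vx,\vy} = \bigcap_{\vz \in \vD_{\vx,\vy}} \{\vF \in \F : \vF \cap \{\vz\} \neq \es\}. $$
Each singleton $\{\vz\}$ is compact, so each set on the right-hand side is one of the generators of $\sF$. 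Being a countable intersection of elements of $\sF$, the event $\vA_{\vx,\vy}$ lies in $\sF$.

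To complete the argument I would tabulate the four cases for a Borel set $\vB \in \bar\B$: $\vpsi_{\vx,\vy}^{-1}(\vB)$ equals $\emptyset$, $\vA_{\vx,\vy}$, $\F \setminus \vA_{\vx,\vy}$, or $\F$ depending on whether $|\vx-\vy|$ and $-\infty$ lie in $\vB$. In every case the preimage belongs to $\sF$, giving the claimed measurability.

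There is no real obstacle here; the only subtlety is the passage from the uncountable condition $[\vx,\vy]\subset \vF$ to the countable one $\vD_{\vx,\vy}\subset \vF$, which is what justifies writing $\vA_{\vx,\vy}$ as a countable intersection. This is precisely the step that uses both the closedness of $\vF$ and the rational parametrisation of the segment, and it is also the pattern that will be reused when approximating arbitrary rectifiable paths by piecewise-linear ones in the proof of Proposition~\ref{prop:tau_welldef}.
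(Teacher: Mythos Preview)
Your proof is correct and follows essentially the same approach as the paper: reduce to showing $\{F\in\F:[x,y]\subset F\}\in\sF$, then use the countable dense set $\{x+q(y-x):q\in\Q\cap[0,1]\}$ together with closedness of $F$ to write this event as a countable intersection of generators of $\sF$. Your version is slightly more explicit in spelling out the four preimage cases, but the core idea and execution are the same.
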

\begin{proof}
    Since $\psi_{x,y}$ is a piecewise constant function, it suffices to show that $\{[\vx,\vy] \subset \vF\}$ is measurable with respect to $\vF$. 
    Considering the rational approximation $$[\vx,\vy]_{\Q} = \lbrace \vz=\vx + \va(y-x): \va \in \Q \cap [0,1] \rbrace,$$ we have
    $$ \lbrace \vF \in \F: [\vx,\vy] \subset \vF \rbrace = \bigcap_{\vz \in [\vx,\vy]_{\Q}} \lbrace \vF \in \F: \lbrace \vz \rbrace \cap \vF \neq \emptyset \rbrace \in \sF,$$
    which completes the proof.
\end{proof}

\begin{proof}[Proof of Proposition~\ref{prop:tau_welldef}]
    Since $\lbrace o \in \St_\Xi(\vWpO{})\rbrace = \lbrace \vgamma_{\vXi }(o,\vS) > -\infty \rbrace \in \A,$
    it suffices to prove measurability of $\vgamma_{\vF}(o,\vT)$.    By the previous remark on rectifiability, the length of a path $\vf \in \PP_{\vF}(o, \vT)$ can be approximated by the sum of the length of line segments connecting points on the curve $\vf([0,1])$. Moreover, since $\vf \in \PP_{\vF}(o, \vT)$ contains only paths contained in the interior of $\vF$, the line segments can be assumed to be contained in $\vF$. That is, 
    \begin{align*}
    \H_{1}(f([0,1])) &= \sup_{k \geq 1} \,  \sup \left\lbrace   \sum_{i \le \vk } |\vf(\vt_{i-1}) - \vf(\vt_{i})|  : 0 = \vt_{0} < \vt_{1} < \cdots < \vt_{k} = 1, \right. \\ & \qquad  [f(\vt_{i-1}), f(\vt_{i})] \subset F \text{ for each } i \leq k    \Bigg\rbrace  \\
    &= \sup_{k \geq 1} \, \sup_{0 = \vt_{0} < \vt_{1} < \cdots < \vt_{k} = 1} \sum_{i \le \vk - 1} \vpsi_{\vf(\vt_{i-1}), \vf(\vt_{i})}(F) \\
    &= \sup_{k \geq 1} \, \sup_{\substack{0 = \vt_{0} < \vt_{1} < \cdots < \vt_{k} = 1, \\ \vt_{1}, \ldots, \vt_{k-1} \in \Q}} \sum_{i\le k} \vpsi_{\vf(\vt_{i-1}), \vf(\vt_{i})}(F).
    \end{align*}
    Thus, defining $\PP'(y)$ to be the family of all piecewise affine linear functions $\vf:[0, 1] \to \mathring{F}$ that have coefficients in $\Q$ and satisfy $\vf(0)=o$, $\vf(1)=y$, we can approximate paths by line segments to obtain that
    \begin{align*}
        \vgamma_{\vF}(o,\vT) &= \inf_{\vy \in \vT} \, \inf_{\vf \in \PP_{\vF}(o, \lbrace \vy \rbrace)} \H_{1}(f([0,1])) = \inf_{\vy \in \vT \cap \Q^{\vd}} \, \inf_{\vf \in \PP'(\vy)} \H_{1}(f([0,1])) \\
        &= \inf_{\vy \in \vT \cap \Q^{\vd}} \, \inf_{\vf \in \PP'(\vy)} \sup_{k \geq 1} \, \sup_{\substack{0 = \vt_{0} < \vt_{1} < \cdots < \vt_{k} = 1 \\ \vt_{1}, \ldots, \vt_{k-1} \in \Q}} \sum_{i=0}^{\vk - 1} \vpsi_{\vf(\vt_{i}), \vf(\vt_{i + 1})}(F)
    \end{align*}
    Hence, $\vgamma_{\vF}(o,\vT)$ can be represented via nested infima and suprema of countably many functions that are measurable by Lemma~\ref{lem:straightline}.
         Thus $\vgamma_{\vF}(o,\vT)$ is measurable as an infimum of countably many measurable functions. Since $\vXi$ is $(\A, \sF)$-measurable, $\vgamma_{\vXi}(o,\vS)$ is $(\A, \bar{\B})$-measurable. Moreover this is which leads to the claim.
\end{proof}

\subsection{Constrictivity}
\label{measureConstrSec}
In the following it is shown that the constrictivity $\vbeta = (\rmax/\rmin{}{})^{d-1}$ is well defined for random closed sets $\vXi$. The well-definedness of $\rmax$ can be deduced directly from basic properties of random closed sets.

 \begin{lemma}\label{lem:compositionRACS}
    Let $\vF \in \F, \vK \in \K$ be arbitrary. Then, $\overline{\vXi^{c}}, \vXi \cap \vF, \vXi \cup \vF,  \vXi \oplus \vK$ and $\vXi \ominus \vK$ are random closed sets.
 \end{lemma}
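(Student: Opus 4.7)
The plan is to verify, for each of the five operations, that the resulting set is almost surely closed and that the corresponding mapping $\vOmega \to \F$ is $(\A, \sF)$-measurable. Since $\sF$ is generated by the hitting events $\{\vF' \in \F : \vF' \cap \vK' \neq \emptyset\}$ with $\vK' \in \K$, it suffices to check measurability on these generators. Closedness is immediate in each case, with $\vXi \oplus \vK$ relying on the compactness of $\vK$ to keep the Minkowski sum closed, and $\vXi \ominus \vK = \bigcap_{\vy \in \vK}(\vXi - \vy)$ being an intersection of closed sets.

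For intersection, union and dilation, measurability follows from the elementary manipulations
\begin{align*}
\{(\vXi \cap \vF) \cap \vK' \neq \emptyset\} &= \{\vXi \cap (\vF \cap \vK') \neq \emptyset\}, \\
\{(\vXi \cup \vF) \cap \vK' \neq \emptyset\} &= \{\vXi \cap \vK' \neq \emptyset\} \cup \{\vF \cap \vK' \neq \emptyset\}, \\
\{(\vXi \oplus \vK) \cap \vK' \neq \emptyset\} &= \{\vXi \cap (\vK' \oplus \check{\vK}) \neq \emptyset\},
\end{align*}
where $\check{\vK} := \{-\vy : \vy \in \vK\}$, and both $\vF \cap \vK'$ and $\vK' \oplus \check{\vK}$ are compact. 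For $\overline{\vXi^c}$ I would employ the duality $\overline{\vXi^c} \cap \vK' \neq \emptyset \iff \vK' \not\subset \mathring{\vXi}$ and, exploiting the compactness of $\vK'$, write $\{\vK' \subset \mathring{\vXi}\} = \bigcup_{n \in \N}\{\vK' \oplus B(o,1/n) \subset \vXi\}$. The inclusion event $\{A \subset \vXi\}$ for compact $A$ then reduces, via the closedness of $\vXi$ and a countable dense subset $\{a_j\}_j \subset A$, to the countable intersection $\bigcap_{j}\{\vXi \cap \{a_j\} \neq \emptyset\}$, which lies in $\A$.

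The most delicate case is the erosion $\vXi \ominus \vK$, since a direct manipulation of hitting events would require handling intersections of random translates of $\vXi$. The plan is to bypass this by proving upper semi-continuity of the operator $\vF' \mapsto \vF' \ominus \vK$ on $\F$ equipped with the Fell topology, in the sense that $\limsup_n (\vF'_n \ominus \vK) \subset \vF' \ominus \vK$ whenever $\vF'_n \to \vF'$. Indeed, if $x_{n_k} \to x$ along a subsequence with $x_{n_k} \in \vF'_{n_k} \ominus \vK$, then for every $\vy \in \vK$ one has $x_{n_k} + \vy \in \vF'_{n_k}$ and $x_{n_k} + \vy \to x + \vy$, so by the Painlev\'e--Kuratowski characterization of Fell convergence $x + \vy \in \vF'$; hence $x + \vK \subset \vF'$, that is, $x \in \vF' \ominus \vK$. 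This upper semi-continuity makes the preimage of every generator $\{\vF'' : \vF'' \cap \vK'' = \emptyset\}$ open in the Fell topology (a standard compactness extraction argument), and therefore renders $\vF' \mapsto \vF' \ominus \vK$ $(\sF,\sF)$-measurable; composing with $\vXi$ shows that $\vXi \ominus \vK$ is a random closed set. This semicontinuity step is the main obstacle, whereas the other four cases reduce to routine manipulations once the appropriate generators of $\sF$ are chosen.
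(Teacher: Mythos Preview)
Your argument is correct. Each of the five cases is handled soundly: the hitting-set reductions for intersection, union and dilation are standard and valid; the treatment of $\overline{\vXi^c}$ via $\{\vK' \subset \mathring{\vXi}\} = \bigcup_n \{\vK' \oplus B(o,1/n) \subset \vXi\}$ together with the countable-dense-subset trick is clean; and the upper semicontinuity argument for erosion is the right way to go if one wants a self-contained proof, with the compactness extraction correctly yielding openness of the preimages of the miss-sets.

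The paper, however, takes a completely different route: it simply invokes \citep[Chapter~1, Theorem~2.25]{molchanov.2005} and notes that $\vF \oplus \vK$ and $\vF \ominus \vK$ are closed for $\vF \in \F$, $\vK \in \K$. In other words, the paper treats the lemma as a direct consequence of known results in the theory of random closed sets and does not unfold any of the measurability arguments. Your approach is genuinely more elementary and self-contained, which has pedagogical value and avoids reliance on an external reference; the paper's approach is far shorter and appropriate given that these facts are standard in the literature. The semicontinuity argument you single out as ``the most delicate case'' is indeed the nontrivial ingredient hidden inside Molchanov's theorem, so in effect you have reproduced the relevant part of that reference.
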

 \begin{proof}
     The assertions follow from~\citep[Chapter 1, Thm.~2.25]{molchanov.2005} since $\vF \oplus \vK$ and $\vF \ominus \vK$ are closed for all $\vF \in \F, \vK \in \K$.
 \end{proof}

 Showing that $\rmin{}{}$ is well defined involves further arguments and is summarized in the following result.
\begin{proposition}
    \label{prop:connMeas}
    The function $\vF \mapsto \R^{d} \setminus  \St_{\vXi^{\ominus r}}(\vWpI{})$ is $(\sigma_{\F}, \sigma_{\F})$-measurable.
\end{proposition}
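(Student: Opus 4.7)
The plan is to verify the Fell criterion. Setting $C(F) := \R^d \setminus \St_{F^{\ominus r}}(\vWpI)$, it suffices to show that $\{F \in \F : K \not\subset \St_{F^{\ominus r}}(\vWpI)\} \in \sigma_\F$ for every compact $K$. First I would note that $C(F)$ is in fact closed: paths in $\PP_{F^{\ominus r}}(x,\vWpI)$ take values in the open set $U(F) := \mathring{F^{\ominus r}}$, so $\St_{F^{\ominus r}}(\vWpI) \subset U(F)$, and since $U(F)$ is locally path-connected, $\St_{F^{\ominus r}}(\vWpI)$ coincides with the union of those connected components of $U(F)$ that meet $\vWpI$, hence is open.

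The strategy for measurability is to reduce to a countable base $\mathcal{B} = \{b(q,s) : q \in \Q^d,\, s \in \Q,\, s > 0\}$. For $B = b(q,s) \in \mathcal{B}$, connectedness of $B$ together with openness of $\St_{F^{\ominus r}}(\vWpI)$ forces
$$\{B \subset \St_{F^{\ominus r}}(\vWpI)\} \;=\; \{\bar B \subset F^{\ominus r}\} \cap \{q \in \St_{F^{\ominus r}}(\vWpI)\},$$
using that an open set is contained in a closed set iff it is contained in its interior. The first factor lies in $\sigma_\F$ because $F \mapsto F^{\ominus r}$ is measurable by Lemma~\ref{lem:compositionRACS} and the event that a compact set is contained in a random closed set is classical (cover $\bar B$ by finitely many rational open balls each required to hit $F^{\ominus r}$). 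For the second factor I would adapt the rational polygonal approximation from Proposition~\ref{prop:tau_welldef}: $q \in \St_{F^{\ominus r}}(\vWpI)$ iff there exist $n \ge 1$ and rational points $q = q_0, q_1, \ldots, q_n$ with $q_n \in \Q^{d-1} \times \{0\}$ and each segment $[q_{i-1}, q_i] \subset \mathring{F^{\ominus r}}$. By compactness of each segment and the identity $\mathring{F^{\ominus r}} = \bigcup_{n \geq 1} F^{\ominus(r + 1/n)}$, the event $\{[q_{i-1}, q_i] \subset \mathring{F^{\ominus r}}\}$ is a countable union of events $\{[q_{i-1}, q_i] \subset F^{\ominus(r + 1/n)}\}$, each handled by Lemma~\ref{lem:straightline} applied to the random closed set $F^{\ominus(r + 1/n)}$.

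Finally, openness of $\St_{F^{\ominus r}}(\vWpI)$ and compactness of $K$ yield
$$\{F : K \subset \St_{F^{\ominus r}}(\vWpI)\} \;=\; \bigcup_{\substack{m \geq 1,\, B_1, \ldots, B_m \in \mathcal{B} \\ K \subset B_1 \cup \cdots \cup B_m}} \bigcap_{i=1}^{m} \{B_i \subset \St_{F^{\ominus r}}(\vWpI)\},$$
a countable union of countable intersections of the measurable events treated in the previous paragraph; passage to complements gives the required measurability. The main technical obstacle is the rational polygonal approximation itself — the assertion that existence of a Lipschitz path in $\mathring{F^{\ominus r}}$ can always be witnessed by one with vertices in $\Q^d$ and segments compactly contained in $\mathring{F^{\ominus r}}$ — which is the same device used in Proposition~\ref{prop:tau_welldef}, relying on uniform continuity of the path together with the fact that the image has positive distance from the boundary of the open set.
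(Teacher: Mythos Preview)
Your argument is correct, and it takes a genuinely different route from the paper's. The paper proceeds by writing
\[
\R^d\setminus\St_{F^{\ominus r}}(\vWpI)=(F^{\ominus r})^c\cup\widetilde Z_{F^{\ominus r}},\qquad \widetilde Z_{G}=\{x\in G:\PP_G(x,\vWpI)=\emptyset\},
\]
and then representing $\widetilde Z_{G}$ as the sub-level set $\{\zeta(\cdot,G)\le -1\}$ of the function $\zeta(x,G)=\gamma_G(x,\vWpI)$ (set to $-1$ when no path exists). It shows that $\zeta$ is lower semicontinuous in $x$ (Lemma~\ref{lem:semicontinuity}) and jointly measurable in $(x,G)$ (Lemma~\ref{lem:jointmeasurability}), and then invokes a general result from Molchanov stating that sub-level sets of such functions define random closed sets. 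By contrast, you verify the Fell hitting criterion directly: you first observe that $\St_{F^{\ominus r}}(\vWpI)$ is open (a union of components of $\mathring{F^{\ominus r}}$), reduce $\{K\subset\St_{F^{\ominus r}}(\vWpI)\}$ via compactness to finitely many events $\{B\subset\St_{F^{\ominus r}}(\vWpI)\}$ with $B$ a rational ball, and split each of these into a containment event $\{\bar B\subset F^{\ominus r}\}$ and a connectivity event $\{q\in\St_{F^{\ominus r}}(\vWpI)\}$ handled by rational polygonal paths and the identity $\mathring{F^{\ominus r}}=\bigcup_n F^{\ominus(r+1/n)}$. Both approaches ultimately rest on the same rational-segment approximation idea behind Proposition~\ref{prop:tau_welldef} and Lemma~\ref{lem:straightline}; the paper packages the connectivity information into the single function $\zeta$ and offloads the closedness-plus-measurability step to an external black box, whereas your argument is self-contained and more elementary, at the cost of being slightly more combinatorial.

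One minor point: your justification for $\{\bar B\subset F^{\ominus r}\}\in\sigma_\F$ (``cover $\bar B$ by finitely many rational open balls each required to hit $F^{\ominus r}$'') does not actually prove containment; the clean argument is simply $\{\bar B\subset F^{\ominus r}\}=\bigcap_{z\in D}\{\{z\}\cap F^{\ominus r}\neq\emptyset\}$ for a countable dense $D\subset\bar B$, each factor being a generator of $\sigma_\F$ after composing with the measurable map $F\mapsto F^{\ominus r}$ from Lemma~\ref{lem:compositionRACS}. With this correction the proof goes through.
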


Since
$$\R^{d} \setminus  \St_{\vF^{\ominus r}}(\vWpI{}) = (\vF^{\ominus r})^{c} \cup (\vF^{\ominus r} \setminus \St_{\vF^{\ominus r}}(\vWpI{}))= (\vF^{\ominus r})^{c} \cup \lbrace \vx \in \vF^{\ominus r}: \PP_{\vF^{\ominus r}}(\vx, \vWpI) = \emptyset \rbrace,$$
by Lemma \ref*{lem:compositionRACS}, it is sufficient to show that 
$\widetilde{\vZ}_{\vF} = \lbrace \vx \in \vF: \PP_{\vF}(\vx, \vWpI) = \emptyset \rbrace$
is measurable in $\vF$.

The idea of the proof is to represent $\widetilde{\vZ}_{\vF}$ as sub-level set of a measurable lower semicontinuous function and apply~\citep[Chapter 5, Proposition 3.6]{molchanov.2005}. For this purpose, we define the function
$\vzeta: \R^{\vd} \times \F \longrightarrow \R$ by
$$
(\vx, \vF) \mapsto 
\begin{cases}
    \vgamma_{\vF}( \vx , \vWpI{}) & \text{if } \vgamma_{\vF}(\vx,\vWpI{} ) < \infty, \\
\hfil -1 & \text{otherwise.}
\end{cases}$$
To simplify notation, we write $\vzeta(\vx)=\vzeta(\vx, \vF)$ and as in Proposition \ref{prop:tau_welldef} it can be shown that $\vzeta(\vx)$ is measurable in $\vF$ for each $\vx \in \R^{d}.$ Then, $\widetilde{\vZ}_{\vF}$ can be expressed in terms of $\vzeta$, in particular
\begin{equation}\label{eq:levelset_reprsentation}
\widetilde{\vZ}_{\vF} = \lbrace \vx \in \R^{\vd}: \vzeta(\vx) \leq -1 \rbrace.
\end{equation}
The following two lemmas are used to show that $\widetilde{\vZ}_{\vF}$ is measurable.

\begin{lemma}\label{lem:semicontinuity}
The function  $\vzeta$ is lower semicontinuous. That is,
    $$\liminf_{\vy \rightarrow \vx}  \vzeta(\vy, \vF) \geq \vzeta(\vx, \vF).$$
\end{lemma}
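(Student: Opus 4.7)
The plan is to prove the stronger statement that $\vzeta(\cdot,\vF)$ is $1$-Lipschitz wherever it takes finite values, which in particular yields lower semicontinuity at every point. Since $\vzeta$ takes values in $\{-1\}\cup[0,\infty)$ by construction, the case $\vzeta(\vx,\vF)=-1$ is automatic: $\vzeta(\vy,\vF)\ge -1=\vzeta(\vx,\vF)$ for every $\vy$, so the $\liminf$-bound holds trivially. It therefore suffices to treat the case $\vzeta(\vx,\vF)=\vgamma_{\vF}(\vx,\vWpI)\in[0,\infty)$.

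In this case $\PP_{\vF}(\vx,\vWpI)\neq\emptyset$, so any element of this set is a Lipschitz curve into $\mathring{\vF}$ starting at $\vx$; hence $\vx\in\mathring{\vF}$. I would choose $\rho>0$ with $b(\vx,\rho)\subset\mathring{\vF}$, so that for each $\vy\in b(\vx,\rho)$ the segment $[\vy,\vx]$ lies in $\mathring{\vF}$ by convexity of the ball. Given $\varepsilon>0$, pick $\vf\in\PP_{\vF}(\vx,\vWpI)$ with $\H_{1}(\vf([0,1]))\le\vgamma_{\vF}(\vx,\vWpI)+\varepsilon$. Concatenating the linear traversal of $[\vy,\vx]$ with $\vf$ and reparametrizing to $[0,1]$ produces a Lipschitz map $\wt{\vf}\in\PP_{\vF}(\vy,\vWpI)$ whose image has $\H_{1}$-measure at most $|\vy-\vx|+\vgamma_{\vF}(\vx,\vWpI)+\varepsilon$. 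Letting $\varepsilon\to 0$ gives $\vgamma_{\vF}(\vy,\vWpI)\le|\vy-\vx|+\vgamma_{\vF}(\vx,\vWpI)<\infty$, so $\vzeta(\vy,\vF)=\vgamma_{\vF}(\vy,\vWpI)$ for all $\vy$ sufficiently close to $\vx$. Swapping the roles of $\vx$ and $\vy$, which is legitimate because $\vx\in b(\vy,\rho-|\vy-\vx|)\subset b(\vx,\rho)\subset\mathring{\vF}$, yields the reverse inequality and hence $|\vzeta(\vy,\vF)-\vzeta(\vx,\vF)|\le|\vy-\vx|$. Passing to $\liminf$ as $\vy\to\vx$ finishes the argument.

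The main point requiring care, though not a genuine obstacle, is verifying that $\wt{\vf}$ really belongs to $\PP_{\vF}(\vy,\vWpI)$. Defining it piecewise as $\wt{\vf}(t)=(1-2t)\vy+2t\vx$ for $t\in[0,1/2]$ and $\wt{\vf}(t)=\vf(2t-1)$ for $t\in[1/2,1]$, the two pieces glue continuously at $t=1/2$ (both equal $\vx$), the concatenation is Lipschitz because each piece is, and the range lies in $\mathring{\vF}$ since $[\vy,\vx]\subset b(\vx,\rho)\subset\mathring{\vF}$ and $\vf$ already maps into $\mathring{\vF}$. The length bound follows from subadditivity of $\H_{1}$ over the two pieces together with $\H_{1}([\vy,\vx])=|\vy-\vx|$. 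The core geometric input is thus just a triangle-type inequality for the intrinsic geodesic length inside $\mathring{\vF}$, and the case split cleanly handles the transition at the boundary of the finite-value set of $\vzeta$.
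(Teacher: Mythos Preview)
Your proof is correct and follows essentially the same approach as the paper: dispose of the case $\vzeta(\vx,\vF)=-1$ trivially, and in the remaining case use that $\vx\in\mathring{\vF}$ to conclude that $\vzeta(\cdot,\vF)$ is $1$-Lipschitz, hence continuous, near $\vx$. The paper's proof simply asserts the bound $|\vzeta(\vx_n,\vF)-\vzeta(\vx,\vF)|\le|\vx_n-\vx|$ without justification; your version supplies the missing concatenation argument and the symmetric role-swap, so it is in fact a fleshed-out form of the same proof.
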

\begin{lemma}\label{lem:jointmeasurability}
    The function $\vzeta$ is $(\B(\R^{\vd}) \otimes \sigma_\F, \B(\R))$-measurable.
\end{lemma}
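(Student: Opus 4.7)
My plan is to represent $\vzeta$ as a countable infimum of jointly measurable simple functions, adapting the rational-approximation argument from the proof of Proposition~\ref{prop:tau_welldef} to treat the starting point $\vx$ as a free variable rather than a fixed base point. Concretely, I would approximate $\vgamma_{\vF}(\vx,\vWpI)$ by the lengths of admissible piecewise linear paths whose intermediate breakpoints $\vz_1,\ldots,\vz_k \in \Q^{\vd}$ satisfy $\vz_k \in \vWpI$. For each such sequence the candidate length
\[
L_{\vz_1,\ldots,\vz_k}(\vx) = |\vx - \vz_1| + \sum_{i=1}^{k-1}|\vz_i - \vz_{i+1}|
\]
is continuous in $\vx$ and independent of $\vF$, while the admissibility condition amounts to $[\vx,\vz_1] \subset \mathring{\vF}$ together with $[\vz_i,\vz_{i+1}] \subset \mathring{\vF}$ for $1 \le i \le k-1$.

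The main obstacle is to verify that the admissibility events are jointly measurable subsets of $\R^{\vd} \times \F$. For the fixed-endpoint segments this is routine: writing $\mathring{\vF} = \bigcup_{n \ge 1} \vF^{\ominus 1/n}$ and using that $\vF \mapsto \vF^{\ominus 1/n}$ is measurable by Lemma~\ref{lem:compositionRACS}, each event $\{\vF : [\vz_i,\vz_{i+1}] \subset \vF^{\ominus 1/n}\}$ is measurable by Lemma~\ref{lem:straightline}. The first segment $[\vx,\vz_1]$ is the delicate case because $\vx$ varies. Exploiting closedness of $\vF^{\ominus 1/n}$ and density of $\Q \cap [0,1]$, I would rewrite
\[
\{(\vx,\vF): [\vx,\vz_1] \subset \vF^{\ominus 1/n}\} = \bigcap_{t \in \Q \cap [0,1]} \{(\vx,\vF): t\vx + (1-t)\vz_1 \in \vF^{\ominus 1/n}\},
\]
which reduces matters to the joint measurability of $\{(\vy,\vF) : \vy \in \vF^{\ominus 1/n}\}$. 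The latter follows from the identity $B(\vy,1/n) \subset \vF \iff B(\vy,1/n) \cap \Q^{\vd} \subset \vF$ (valid because $\vF$ is closed and $\Q^{\vd}$ is dense in $B(\vy,1/n)$), which yields the countable representation
\[
\{(\vy,\vF) : \vy \in \vF^{\ominus 1/n}\} = \bigcap_{q \in \Q^{\vd}} \bigl[(\{\vy : |\vy - q| > 1/n\} \times \F) \cup (\R^{\vd} \times \{\vF : q \in \vF\})\bigr].
\]

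Once every admissibility event has been shown jointly measurable, the simple function $(\vx,\vF) \mapsto L_{\vz_1,\ldots,\vz_k}(\vx)$ extended by $+\infty$ off its admissibility set is jointly measurable, and taking a countable infimum over admissible rational sequences $(\vz_1,\ldots,\vz_k)$ recovers $(\vx,\vF) \mapsto \vgamma_{\vF}(\vx,\vWpI)$ by exactly the rational-approximation identity that powered Proposition~\ref{prop:tau_welldef}, now applied with variable starting point $\vx$. Reassigning the value $-1$ on the $(\vx,\vF)$-section where $\vgamma_{\vF}$ is infinite then yields $\vzeta$ while preserving measurability. I expect the treatment of the first segment $[\vx,\vz_1]$ to be the only genuinely new difficulty compared with Proposition~\ref{prop:tau_welldef}; everything else reduces to routine manipulation of countable suprema and infima of jointly measurable functions.
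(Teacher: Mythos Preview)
Your approach is correct and genuinely different from the paper's. You build $\vgamma_{\vF}(\vx,\vWpI)$ directly as a countable infimum of jointly measurable functions by extending the rational piecewise-linear approximation of Proposition~\ref{prop:tau_welldef} to a variable starting point; the only new ingredient is the joint measurability of $\{(\vy,\vF):\vy\in\vF^{\ominus 1/n}\}$, which you handle cleanly via the rational-point representation. The paper instead exploits the lower semicontinuity of $\vzeta$ established in Lemma~\ref{lem:semicontinuity}: following~\citep{gowrisankaran.1972}, it shows that each strict sub-level set $\vJ_a=\{(\vx,\vF):\vzeta(\vx,\vF)<a\}$ lies in $\B(\R^{\vd})\otimes\sigma_\F$ by combining the already-known measurability of $\vzeta(\vq,\cdot)$ for rational $\vq$ with countable unions and intersections of balls $b(\vq,1/\vk)$. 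Your route is more self-contained (it does not rely on Lemma~\ref{lem:semicontinuity} at all) and constructive, while the paper's route is more modular---it separates a topological property (lower semicontinuity) from measurability in the second variable and invokes a general principle that upgrades these two ingredients to joint measurability.
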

Before proving the auxiliary results, we explain how to complete the proof of Proposition~\ref{prop:connMeas}.
\begin{proof}[Proof of Proposition~\ref{prop:connMeas}]
    By identity~\eqref{eq:levelset_reprsentation}, $\widetilde{\vZ}_{\vF}$ is the sub-level set of the function $\vzeta$, which is lower semicontinuous and measurable by Lemmas \ref{lem:semicontinuity} and \ref{lem:jointmeasurability}.  Therefore,~\citep[Chapter 5, Proposition 3.6]{molchanov.2005} implies that
        $\lbrace (\vx,\vt) \in \R^d \times \R: \vzeta(\vx, \vF) \leq \vt \rbrace$
        and therefore
        $\lbrace \vx \in \R^d: \vzeta(\vx, \vF) \leq -1 \rbrace$ are measurable in $\vF$.
\end{proof}

It remains to establish lower semicontinuity and measurability of $\zeta$.

\begin{proof}[Proof of Lemma~\ref{lem:semicontinuity}]
    It suffices to consider the case, where $\vzeta(\vx, \vF) > -1$. Let now $\vx \in \mathring{\vF}$ with  $\PP_{\vF}(\vx, \vWpI{}) \ne \es.$ Furthermore, let $\vx_{1}, \vx_{2},\ldots$ be a sequence in $\R^{\vd}$ with $\lim_{\vn \rightarrow \infty} \vx_{\vn} = \vx.$ Since $\vx \in \mathring{\vF}$ there exists an $\vr_{0}>0$ such that there is an $\vn_{0}$ with $x_{\vn} \in b(\vx,\vr_{0})$ for each $\vn \geq \vn_{0}$ and $b(\vx,\vr_{0})\subset \mathring{\vF}$. Then it holds that
    $|\vzeta(\vx_{\vn}, \vF) - \vzeta(\vx, \vF)| \leq |\vx_{\vn} -\vx |$ for each $\vn_{0} \geq \vn$. Thus $\vzeta$ is continuous at $\vx$.
\end{proof}

\begin{proof}[Proof of Lemma~\ref{lem:jointmeasurability}]
    The proof is strongly leaned on the method used for the proof of~\citep[Theorems 2 and 3]{gowrisankaran.1972}. For each $\va \in \R$, define the strict sub-level set
    $$\vJ_{a} = \lbrace (\vx, \vF) \in \R^{\vd} \times \F: \vzeta(\vx, \vF) < \va \rbrace.$$
    Then, $\vJ_{\va}=\emptyset \in \B(\R^{\vd}) \otimes \sigma_\F$ if $\va \le -1$. Furthermore, for every $\va \in (-1, 0]$ we have 
    \begin{align*}
    \vJ_{\va} &= \lbrace (\vx, \vF) \in \R^d \times \F:\, \vzeta(\vx, \vF) = -1 \rbrace  \\
    &= \lbrace (\vx, \vF) \in \R^d \times \F:\, \vzeta(\vx, \vF) > -1 \rbrace^{c} \\
    &= \left( \bigcup_{\vn \geq 1} \bigcap_{\vk > \vn} \bigcup_{\vq \in \Q^{\vd}} \left(b(\vq, 1/\vk) \times \lbrace \vF \in \F:\, \vzeta(\vq, \vF) > -1 \rbrace \right) \right)^{c} \in \B(\R^{\vd}) \otimes \sigma_\F,
    \end{align*}
        where the third equality follows from the openness of $\lbrace x \in \R^{d}: \vzeta(\vx, \vF) > -1 \rbrace$. Finally, for every $\vF \in \F$ the mapping $\vzeta(\cdot, \vF): \R^{d} \longrightarrow \R$ is continuous at each $\vx \in \R^{d}$ with $\vzeta(\vx, \vF) > 0.$
Let $\va > 0$. Note that
    \begin{align*}
    \vJ_{a} &= \bigcup_{\vm \geq 1}\bigcup_{\vn \geq 1} \bigcap_{\vk \geq \vn} \bigcup_{\vq \in \Q^{\vd}}  \left(b(\vq, 1/\vk) \times \lbrace \vF \in \F: \vzeta(\vq, \vF) < \va - 1/\vm \rbrace \right),
    \end{align*}
    which leads to $\vJ_{\va} \in \B(\R^{\vd}) \otimes \sigma_\F$ for each $\va > 0$.
\end{proof}

\section{Proofs}
\label{EstSec}

\begin{proof}[Proof of Theorem~\ref{thm:consistency_tau1}]
    First, we rewrite $\tauest{}{N} $ as 
\begin{align*}
    \tauest{}{N} &=  \frac{1}{\H_{\vd-1}(\St_{\vXi}( \vWpO{})\cap \vWpIpp{}{\vN})} \int_{\St_\vXi(\vWpO))\cap \vWpIpp{}{N}} \vgamma_{\vXi}( \vx , \vWpO{\vl}) \, \mathcal{H}_{d-1}(\mathrm{d}\vx)\\
    &= \frac{\H_{\vd-1}(\vWpIp{\vN})}{\H_{\vd-1}(\St_{\vXi}( \vWpO{})\cap \vWpIpp{}{\vN})} \, \frac{1}{\H_{\vd-1}(\vWpIp{\vN})} \int_{\vWpIp{\vN}} \one_{o \in \St_{\vXi - \vx}( \vWpO{})}\vgamma_{\vXi - x}(o, \vWpO{\vl}) \, \mathcal{H}_{d-1}  (\mathrm{d}\vx).
\end{align*}
 Now, the individual ergodic theorem~\cite[Theorem 6.2]{chiu.2013} implies that the last expression converges almost surely to 
    \begin{align*}
     \frac{1}{\P(o \in \St_\vXi( \vWpO{\vl}))} \, \E[\vgamma_\vXi(o , \vWpO{\vl})\one_{o \in \St_\vXi( \vWpO{\vl})}] = \vtau_{}.
\end{align*}
This shows the strong consistency of $\tauest{}{N}.$
\end{proof}

Next, we prove Corollary~\ref{thm:consistency_tau2}.
\begin{proof}[Proof of Corollary~\ref{thm:consistency_tau2}]
    Since, $\tauest{}{N}$ equals $\tauest{\alpha}{N}$ given the event $\vRpp{\valpha}{N}$ we obtain that 
    $$|\tauest{\alpha}{N} - \vtau_{}| \le |\tauest{\alpha}{N} - \tauest{}{N}| + |\tauest{}{N} - \vtau_{}|  \le \one_{\vRpp{\valpha}{N}}|\tauest{\alpha}{N} - \tauest{}{N}| + |\tauest{}{N} - \vtau_{}|.$$
    Moreover, by Theorem \ref{thm:consistency_tau1} the second summand tends to 0 almost surely as $N \to \infty$. Since in cases (i),(ii) the first summand tends to 0 almost surely respectively in probability, we conclude the proof.
\end{proof}


\begin{proof}[Proof of Theorem~\ref{thm:consistency_rmax}]
    We define a family of stochastic processes $\lbrace \vU_N(r): N \ge 1 \rbrace$ with index set $[0, \infty)$ by
\begin{equation*}
\vU_{N}(r) = \frac{2\nu_{d}\left( \Psi_r(\vXi)\cap \vWpp{}{\vN}\right)}{\vN^{\vd-1}} - \frac{\nu_{d}(\vXi \cap \vWpp{}{\vN})}{\vN^{\vd-1}}.
\end{equation*}
    for all $\vr \geq 0, \vN \geq 1$. It is sufficient to show that  $\liminf_{\vN \to \infty}\vU_{\vN}(\vr) > 0$ for each $\vr < \rmax$ and $\limsup_{\vN \to \infty}\vU_{\vN}(\vr)<0$ for each $\vr > \rmax$.  First, let $\vr < \rmax$. Due to ergodicity as $N \rightarrow \infty$, the random variables $\vU_{N}(r)$ converge almost surely to 
    $2 \, \E[\nu_{d}\left( \Psi_r(\vXi)\cap \vWpp{}{1}\right)]- \E [\nu_{d}\left( \vXi\cap \vWpp{}{1}\right)],$
    which by assumption~\eqref{eq:assumption_consistency_rmax} is strictly positive. The case $\vr > \rmax$ follows verbatim.
\end{proof} 

\begin{proof}[Proof of Theorem~\ref{thm:consistency_rmin}]
	Note that $\vZ_{\vXi, \vr}$ is $\R^{d-1}$-stationary and ergodic as 
	$\vZ_{\vXi + \vx, \vr} = \vZ_{\vXi, \vr} + \vx$ for each $\vx \in \vWpI.$ Due to the ergodicity, the proof is analogous to the proof of Theorem \ref{thm:consistency_rmax}.	
\end{proof}

\begin{proof}[Proof of Corollary~\ref{cor:consistency_rmin_weaker_assumption}]
Part (i) follows directly from Condition~(\ref{eq:assumption_consistency_rmax}). It remains to prove assertion (ii). Let $0< \vr_1 < \rmin{}{} < \vr_2$ be arbitrary. Then,
\begin{enumerate}
    \item[(i$^\prime$)] $o \in \vXi^{\ominus \vr_1}$ implies that $B(o,\vr_1) \subset \vZ_{\vXi, r_1},$
    \item[(ii$^\prime$)] $o \not \in \Psi_{\vr_2}(\vXi)$ implies that $B(o,\vdelta) \cap \vZ_{\vXi, r_2} = \emptyset$ for some $\vdelta > 0$.
\end{enumerate}
Combining properties (i$^\prime$) and (ii$^\prime$) with Condition~(\ref{eq:weaker_assumption_consistency_rmin}) gives $\P(A) > 0$, where
$$\vA = \lbrace \nu_{d}((\vZ_{\vXi, \vr_1} \setminus \vZ_{\vXi, \vr_2}) \cap \vW_1) > 0 \rbrace.$$
Using $\vZ_{\vXi, \vr_2} \subset \vZ_{\vXi, \vr_1}$, we obtain that
\begin{align*}
	\E[\nu_d(Z_{\vXi, \vr_2} \cap \vW_1)] &= \P(A) \, \underbrace{\E [\nu_d(Z_{\vXi, \vr_2} \cap \vW_1) \mid A]}_{<\E [\nu_d(Z_{\vXi, \vr_1} \cap \vW_1) \mid A]} + (1 - \P(A)) \underbrace{\E [\nu_d(Z_{\vXi, \vr_2} \cap \vW_1) \mid A^c]}_{\leq \E [\nu_d(Z_{\vXi, \vr_1} \cap \vW_1) \mid A^c]},
\end{align*}
which is strictly smaller than $\E \nu_d(Z_{\vXi, \vr_1} \cap \vW_1)$ and therefore leads to the claim.
\end{proof}

\begin{proof}[Proof of Corollary~\ref{constCor}]
	Since the random variable $N_0$ is almost surely finite, the random set $\vZ_{\vXi \cap \vWpp{\alpha}{\vN},r}$ coincides with  $\vZ_{\vXi,r}$ for all $N \ge N_0$. Thus, we obtain assertion (i) by Theorem \ref{thm:consistency_rmin}. The assertion (ii) can be shown analogously. 
\end{proof}

\section{Analysis of edge effects}
\label{EdgeEffects}

The question arises to which extend plus sampling is required to estimate $\tau$ and $\rmin{}{}$ sufficiently precisely by the estimators given in (\ref{eq:tau_est_alpha}) and (\ref{eq:rmin_est_alpha}). In this section it is shown that 
the amount of required plus sampling is asymptotically negligible in comparison to the size of the sampling window for a certain type of a two-phase random-set model that has been considered previously in an application to materials science~\citep{n.2016}. Loosely speaking, this model results from a Voronoi mollification of a union of parametric proximity graphs, to be more precise, of beta-skeletons~\citep{kirkpatrick.1985}. Here we consider the special case, where the parameters of the beta-skeletons are chosen such that each of them coincides with the relative neighborhood graph.


First, in Section~\ref{defRngSec}, we provide a precise definition of this model. Then, in Section~\ref{tortRngSec}, we show that starting from a sampling window of side length $\vN\ge1$ the paths that are relevant for estimation of tortuosity are contained in an $\vN^\valpha$-environment with high probability, where $\valpha>0$ is an arbitrary positive number. The results obtained in Section~\ref{tortRngSec} are valid for an arbitrary dimension $d \geq 2.$ Finally, in Section~\ref{constRngSec}, we show that the issue of edge effects for constrictivity estimation can be translated into questions of an appropriately constructed continuum percolation model. We analyze this model first rigorously for very large and very small erosion radii, and then via simulation for intermediate values in Section~\ref{Numerics} to conclude that also for constrictivity estimation only an asymptotically negligible amount of plus sampling is required. Moreover, to ensure that constrictivity can be estimated strongly consistently in the multi-phase RNG model, a verification of Conditions (\ref{eq:assumption_consistency_rmax}) and (\ref{eq:weaker_assumption_consistency_rmin}) is necessary. As the corresponding proof is based on a rather technical construction, it is postponed to Appendix~\ref{sec:app_constrictivity}. Note that the results regarding the estimation of constrictivity are only valid for dimension $d=3$.

\subsection{Definition of a multi-phase RNG model}
\label{defRngSec}
In this section, we provide a mathematically precise definition of the multi-phase model under consideration as well as consistency results regarding the estimation of geodesic tortuosity and constrictivity. 

In materials science, microstructures, in which transport processes take place, exhibit a high degree of connectivity. Such highly connected structures can be represented by models based on connected random geometric graphs, such as the relative neighborhood graph (RNG)~\citep{jaromczyk.1992, n.2016}. Note that results regarding the lengths of shortest paths in the RNG itself have been discussed in~\citep{aldous.2010}.

 Loosely speaking, the phases are based on skeletons given by independent Poisson RNG, i.e., by RNG with vertices given by Poisson point processes. Then, we use a Voronoi mollification to associate the skeleton with a full-dimensional random closed set. In the following, we provide a more detailed description of both construction steps. First, we motivate the use of the RNG. 

The \emph{relative neighborhood graph} $\Rng(\vphi)$ is a graph on the locally finite vertex set $\vphi \subset \R^{d}$ where two nodes $\vx,\vy\in\vphi$ are connected by an edge if and only if there does not exist $\vz\in\vphi \setminus \lbrace x,y \rbrace$ such that $\max\{|\vx-\vz|,|\vy-\vz|\}\le |\vx-\vy|$. In order to model a $k$-phase material, $k\ge2$, we first build $k\ge1$ RNG based on independent homogeneous Poisson point processes $\vX\vda{1},\ldots,\vX\vda{k}$ with some intensities $\vla_1,\ldots,\vla_k>0.$ 

In a second step, we use a Voronoi mollification to construct for each of the $k$ graphs a full-dimensional random closed set representing the corresponding phase. More precisely, if $\vPhi=\{\vphi_i\}_{1 \le i\le k}$ is a collection of locally finite subsets of $\R^d$, we define the \emph{Voronoi mollification} of $\Rng(\vphi_i)$ with respect to $\vPhi$ by
$$\Vor(\vphi_i,\vPhi) = \{\vx\in\R^d:\,\dist(\vx,\Rng(\vphi_i))\le\inf_{1 \le j\le k}\dist(\vx,\Rng(\vphi_j))\},$$
where we use the notation $\dist(z, S) = \inf_{\vx \in S}|x- z|$ for each $S \subset \R^{d}$. That is,  $\Vor(\vphi_i,\vPhi)$ is the set of all points $\vx\in\R^d$ that are closer to the graph $\Rng(\vphi_i)$ than to any other of the graphs $\Rng(\vphi_1),\ldots,\Rng(\vphi_k)$, see Figure~\ref{modelFig}. Furthermore,  put $\vXi_i=\Vor(\vX\vda{i},\{\vX\vda{j}\}_{1\le j\le k})$ for each $i\in\{1,\ldots,k\}.$

\begin{theorem}
    \label{thm:edge}
    Let $\alpha \in (0,1)$ be arbitrary and $\vXi_1$ be the Voronoi mollification of a Poisson-RNG. 
    \begin{enumerate}
        \item There exists an almost surely finite random variable $\vN_0 \ge 1$ such that the event $\vRpp{\valpha}{\vN}$ occurs for all $\vN \ge \vN_0$. Moreover, $\E [\vgamma_{\vXi_1}(o, \vWpO{\vl}) \one_{o \in \St_{\vXi_1}( \vWpO{\vl})}] < \infty$.
        \item There exist  $0 < \vr_- < \rmin{}{} < \vr_+$ such that for every $\vr \notin (\vr_{-}, \vr_{+})$ there is an almost surely finite random variable $\vN_0 \ge 1$ such that the event $\vRminpp{\valpha}{\vN,r}$ occurs for all $\vN \ge \vN_0$. Moreover, 
        $\P( o \in \vXi_1^{\ominus \vr_1} \setminus \vPsi_{r_2}(\vXi_1)) > 0$ holds for all $0 < \vr_1 < \vr_2.$
    \end{enumerate}
\end{theorem}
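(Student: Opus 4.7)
The plan is to reduce both assertions to well-established percolation and first-passage percolation phenomena on the Poisson relative-neighborhood graph and to propagate them to its Voronoi mollification. The key observation is that a geodesic in $\vXi_1$ essentially tracks an RNG-path in $\Rng(\vX\vda{1})$ of comparable length, so that both the length-statistics and the connectivity-statistics of $\vXi_1$ inherit their tail behavior from the underlying graph.

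For part~(1), I would first establish the moment bound. On the event $\{o \in \St_{\vXi_1}(\vWpO{})\}$, one exhibits a path from $o$ to $\vWpO{}$ by chaining $O(1)$ RNG-edges of $\vX\vda{1}$ across the slab $\vWpI \times [0,1]$; Poisson concentration inside unit cubes yields exponential tails on the total edge-length required, so that $\vgamma_{\vXi_1}(o,\vWpO{})\one_{o\in\St_{\vXi_1}(\vWpO{})}$ has finite expectation. For the localization $\vRpp{\valpha}{\vN}$, it suffices to control rational starting points in $\vWpIp{\vN}$, of which there are only polynomially many at each dyadic scale. For each such $x$, the event that the geodesic in $\vXi_1$ from $x$ to $\vWpO{}$ exits $\vWpp{\valpha}{\vN}$ forces a detour of length at least $\vN^{\valpha}$ beyond the typical $O(1)$ cost, and a first-passage-percolation-type deviation estimate, obtained from a stabilization argument for the locally determined RNG together with the exponential decay of edge-lengths under Poisson concentration, shows that this happens with superpolynomially small probability in $\vN$. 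A Borel--Cantelli argument then produces the required almost surely finite $\vN_0$.

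For part~(2), I would prove the positivity $\P(o\in\vXi_1^{\ominus \vr_1}\setminus\vPsi_{\vr_2}(\vXi_1))>0$ by exhibiting a deterministic local configuration of the underlying Poisson processes on which the prescribed geometry is forced, and which has positive Poisson probability. Namely, place two points of $\vX\vda{1}$ symmetrically on the axis through $o$ parallel to $e_d$, at distance large enough that $B(o,\vr_1)$ fits inside the mollified RNG-tube around their edge, while further flanking points of $\vX\vda{1}$ and of the other phase processes $\vX\vda{2},\ldots,\vX\vda{k}$ pinch the tube just away from $o$, so that no ball of radius $\vr_2>\vr_1$ containing $o$ fits inside $\vXi_1$. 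For the localization $\vRminpp{\valpha}{\vN,\vr}$ I would invoke a continuum-percolation phase transition in $\{\vXi_1^{\ominus \vr}\}_{\vr\ge 0}$: for large $\vr>\vr_+$ the eroded set is subcritical and its connected components have exponentially decaying diameters, so any component meeting $\vWpIp{\vN}$ stays inside $\vWpp{\valpha}{\vN}$ with overwhelming probability; for small $\vr<\vr_-$, $\vXi_1^{\ominus \vr}$ retains the unique unbounded component of $\vXi_1$, and a renormalization argument shows that any two points of this component lying in $\vWpIp{\vN}$ are already connected inside $\vWpp{\valpha}{\vN}$. In both regimes Borel--Cantelli concludes.

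The main obstacle will be deriving the underlying tail estimates. Neither the first-passage-percolation concentration for the Voronoi-mollified Poisson-RNG nor the subcritical and supercritical bounds for its erosion appear off-the-shelf, and both must be produced via a careful stabilization argument that exploits the locality of the relative-neighborhood criterion together with Poisson independence. Pinning down concrete $\vr_-$ and $\vr_+$ is delicate because the critical erosion radius cannot be computed explicitly; the statement sidesteps this by restricting $\vr$ to the complement of a small interval around the critical value where the classical sub- and supercritical percolation tools apply, leaving the intermediate regime to be probed numerically in Section~\ref{Numerics}.
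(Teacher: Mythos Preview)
Your high-level structure matches the paper's: part~(i) is handled by showing that RNG paths are short and stay local, and part~(ii) splits into a subcritical/supercritical percolation analysis plus an explicit positive-probability configuration. However, several of your intermediate steps are either different from the paper's or too vague to be called correct.

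For part~(i), the paper does not use a first-passage-percolation deviation estimate. Instead it invokes the \emph{descending-chain} argument of Daley--Last and Aldous--Kendall: if two RNG-vertices $x,y$ in a box of scale $\vN^{\alpha}$ cannot be joined within a slightly larger box, there must exist an $|x-y|$-bounded descending chain making many hops, and the expected number of such chains is bounded by a factorial-type expression that decays stretched-exponentially. This is the concrete mechanism behind both the localization $\vRpp{\valpha}{\vN}$ and the moment bound; your ``stabilization plus Poisson concentration'' sketch does not name this tool, and a generic FPP large-deviation bound for the mollified RNG is not available in the literature. The moment bound in the paper is also more delicate than ``$O(1)$ edges across the slab'': it goes through an explicit truncation at scale $N(r)=r^{1/(d+2)}$ and uses the uniform degree bound for the RNG.

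For part~(ii), your positive-probability construction (two axial points plus flanking points pinching a tube) is far simpler than what the paper actually carries out, and I doubt it works as stated. The difficulty is that you must simultaneously ensure $B(o,\vr_1)\subset\vXi_1$ \emph{and} $o\notin\vPsi_{\vr_2}(\vXi_1)$, and the latter requires ruling out \emph{every} ball of radius $\vr_2$ through $o$, not just those along the axis. The paper devotes an entire appendix to a configuration with sixteen points of $\vX\vda{1}$ on two concentric cubes and sixteen points of each other phase on two further cubes, together with several geometric lemmas (Lemmas~\ref{lem:edges_in_rng_on_cube}--\ref{lem:inclusion_of_ball}) controlling the resulting mollification. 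Your sketch does not address why the flanking points would not themselves create a wide region of $\vXi_1$ elsewhere around $o$.

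Finally, the supercritical regime is harder than ``renormalization'': the paper's coarse-graining requires a connectivity condition \textbf{(C')} whose verification needs a second appendix analyzing the line-segment Voronoi tessellation (quad points, cell boundaries as quadrics, Sard-type finiteness). You correctly flag that the estimates are not off-the-shelf, but the specific obstacle---non-convex eroded cells in which the straight segment from $x$ to its nearest RNG-point need not lie in $\vXir$---is not visible in your outline.
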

We conjecture that the second part remains true for every $0 < \vr_- < \rmin{}{} < \vr_+$.

\begin{corollary}
    \label{cor:consEdge}
    Let $\alpha \in (0,1)$ be arbitrary and $\vXi_1$ be the Voronoi mollification of a Poisson-RNG. Then,
    \begin{enumerate}
        \item the estimator $\tauest{\valpha}{\vN}$ is strongly consistent as $N \rightarrow \infty.$
        \item there exist $0 < \vr_- < \rmin{}{} < \vr_+$ and an almost surely finite random variable $\vN_0$ such that the event $\vr_- \le  \rminestedge{}{}{\vN}{}\le \vr_+$ occurs for all $\vN \ge \vN_0$.
    \end{enumerate}
\end{corollary}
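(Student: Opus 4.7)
This is a direct application of Corollary~\ref{thm:consistency_tau2}(i): the two hypotheses $\E[\vgamma_{\vXi_1}(o,\vWpO{})\one_{o\in\St_{\vXi_1}(\vWpO{})}] < \infty$ and the almost sure eventual occurrence of $\vRpp{\valpha}{\vN}$ are precisely the two statements contained in Theorem~\ref{thm:edge}(i). Hence $\tauest{\valpha}{\vN} \to \vtau$ almost surely, and no further work is required.

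\textbf{Plan for Part (ii), setup.} The idea is to sandwich $\rminestedge{}{}{\vN}{}$ between the two radii $\vr_-,\vr_+$ provided by Theorem~\ref{thm:edge}(ii). Its second bullet supplies $\P(o \in \vXi_1^{\ominus \vr_1} \setminus \vPsi_{\vr_2}(\vXi_1)) > 0$ for \emph{every} $0 < \vr_1 < \vr_2$. Feeding this into Corollary~\ref{cor:consistency_rmin_weaker_assumption}(ii) validates Condition~(\ref{eq:assumption_consistency_rmin}); moreover, re-running that proof verbatim for arbitrary $\vr_1<\vr_2$ actually yields strict monotonicity of $\vr \mapsto \E[\nu_d(\vZ_{\vXi_1,\vr}\cap \vWpp{}{1})]$ on $(0,\infty)$. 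Combined with the defining supremum for $\rmin{}{}$, this gives the strict inequalities
\begin{align*}
2\,\E[\nu_d(\vZ_{\vXi_1,\vr_-}\cap \vWpp{}{1})] &> \E[\nu_d(\vXi_1 \cap \vWpp{}{1})],\\
2\,\E[\nu_d(\vZ_{\vXi_1,\vr_+}\cap \vWpp{}{1})] &< \E[\nu_d(\vXi_1 \cap \vWpp{}{1})].
\end{align*}

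\textbf{Ergodic theorem and edge-effect identification.} Since $\vZ_{\vXi_1,\vr}$ inherits $\R^{d-1}$-stationarity and ergodicity from $\vXi_1$, the spatial ergodic theorem (exactly as in the proof of Theorem~\ref{thm:consistency_rmin}) gives, almost surely for all sufficiently large $\vN$, that $2\nu_d(\vZ_{\vXi_1,\vr_-}\cap \vWpp{}{\vN}) > \nu_d(\vXi_1\cap \vWpp{}{\vN})$ and $2\nu_d(\vZ_{\vXi_1,\vr_+}\cap \vWpp{}{\vN}) < \nu_d(\vXi_1\cap \vWpp{}{\vN})$. Because $\vr_\pm \notin (\vr_-,\vr_+)$, Theorem~\ref{thm:edge}(ii) simultaneously supplies an almost surely finite $\vN_0$ such that both $\vRminpp{\valpha}{\vN,\vr_-}$ and $\vRminpp{\valpha}{\vN,\vr_+}$ occur for $\vN \ge \vN_0$. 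Since $\vr_\pm$ are fixed while $\vN^{\valpha} \to \infty$, the argument already used in the proof of Corollary~\ref{constCor} shows $\vZ_{\vXi_1 \cap \vWpp{\valpha}{\vN},\vr}\cap \vWpp{}{\vN} = \vZ_{\vXi_1,\vr}\cap \vWpp{}{\vN}$ for $\vr \in \{\vr_-,\vr_+\}$ and $\vN \ge \vN_0$. Finally, a straightforward radial-contraction of centres shows $\vr \mapsto \vZ_{\vXi_1 \cap \vWpp{\valpha}{\vN},\vr}$ is set-theoretically decreasing, so the strict failure of the defining inequality at $\vr_+$ propagates to every $\vr > \vr_+$; together with strict success at $\vr_-$ this yields $\vr_- \le \rminestedge{}{}{\vN}{} \le \vr_+$ for $\vN \ge \vN_0$.

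\textbf{Main obstacle.} The delicate step is the set identification $\vZ_{\vXi_1 \cap \vWpp{\valpha}{\vN},\vr}\cap \vWpp{}{\vN} = \vZ_{\vXi_1,\vr}\cap \vWpp{}{\vN}$ on the event $\vRminpp{\valpha}{\vN,\vr}$: one must rule out both that the truncation to $\vWpp{\valpha}{\vN}$ corrupts the erosion near $\partial \vWpp{\valpha}{\vN}$, and that connected components reaching $\vWpI$ in the ambient configuration are lost when one restricts the starting set to $\vWpIp{\vN}$. The former is controlled by $\vN^{\valpha} \gg \vr$, the latter is exactly what $\vRminpp{\valpha}{\vN,\vr}$ encodes. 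The additional bookkeeping over Corollary~\ref{constCor} is that Theorem~\ref{thm:edge}(ii) only hands us the event at radii outside $(\vr_-,\vr_+)$, which is why the final statement is a sandwich rather than the conjectured strong consistency.
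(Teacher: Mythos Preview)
Your proposal is correct and follows the same route as the paper: part~(i) is exactly the combination of Theorem~\ref{thm:edge}(i) with Corollary~\ref{thm:consistency_tau2}(i), and for part~(ii) you spell out in detail what the paper compresses into the phrase ``inspecting the proof of Corollary~\ref{constCor} reveals that the weaker condition is sufficient to deduce the weaker conclusion''---namely, the strict monotonicity from Corollary~\ref{cor:consistency_rmin_weaker_assumption}, the ergodic-theorem sandwich at $r_\pm$, and the identification of the windowed and full $\vZ$-sets on the event $\vRminpp{\valpha}{\vN,r}$. One small caveat: the event $\vRminpp{\valpha}{\vN,r}$ is literally formulated with $\vWpIp{\vN}$ rather than the full hyperplane $\vWpI$, so the set identification you claim does not follow from $\vRminpp{\valpha}{\vN,r}$ alone but from the stronger small-diameter conclusions of Theorems~\ref{subCritProp} and~\ref{supCritProp} underlying it; the paper's proof leaves this same point implicit.
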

    If part (ii) of Theorem~\ref{thm:edge} was true for every $0 < \vr_- < \rmin{}{} < \vr_+$, then we could strengthen part (ii) of Corollary~\ref{cor:consEdge} to get strong consistency of $\rminestedge{}{}{\vN}{}$.
\begin{proof}[Proof of Corollary~\ref{cor:consEdge}]
    First, we argue that the estimators $\tauest{}{N}$ and  $\rminest{}{}{\vN}{}$ are consistent. Note that $\vXi_1$ is clearly $\R^{d-1}$-stationary. Moreover, due to the local nature of the RNG and mollification construction, one can also verify that it is ergodic \citep{daley.2008}. Hence, to apply Theorem~\ref{thm:consistency_tau1}, we only need to verify integrability of shortest-path lengths, which follows from part (i) in Theorem~\ref{thm:edge}. As explained in Corollary~\ref{thm:consistency_tau2}, we now conclude strong consistency of the edge-corrected estimator $\tauest{\valpha}{\vN}$.
    Similarly, to apply Theorems~\ref{thm:consistency_rmax} and~\ref{thm:consistency_rmin}, we use Corollary~\ref{constCor} and verify Condition~(\ref{eq:weaker_assumption_consistency_rmin}) in Appendix~\ref{sec:app_constrictivity}.  Alas, the conclusion of Theorem~\ref{thm:edge}, part (ii) is not strong enough to yield strong consistency of $\rminestedge{}{}{\vN}{}$. Nevertheless,  inspecting the proof of Corollary~\ref{constCor} reveals that the weaker condition is sufficient to deduce the weaker conclusion.
\end{proof}

\begin{figure}[!htpb]
\centering
\includegraphics[width = 0.5 \textwidth]{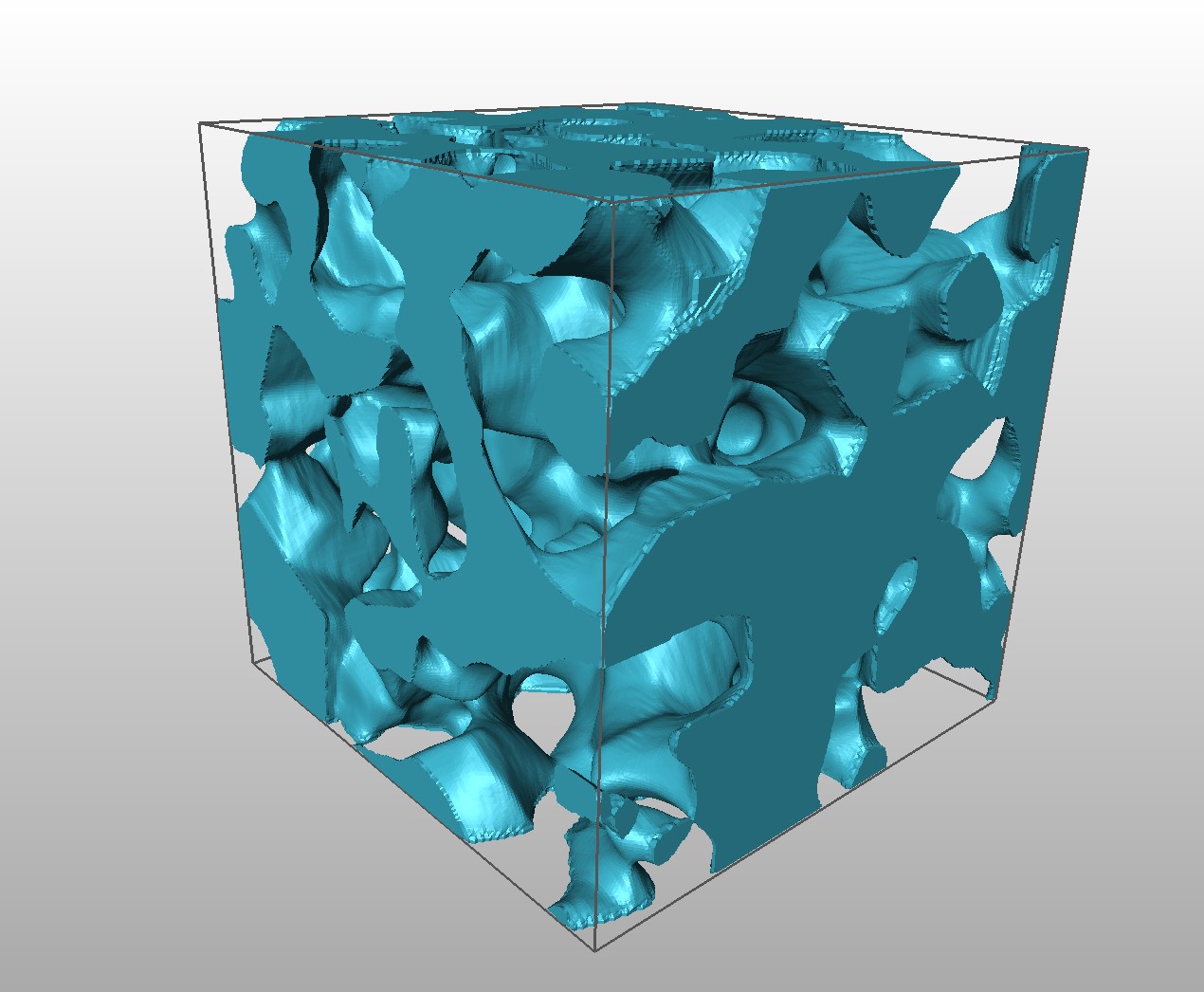}
\caption{Illustration of an RNG-based two-phase material model.}
\label{modelFig}
\end{figure}

\subsection{Geodesic tortuosity}
\label{tortRngSec}
Note that a strongly consistent estimation of mean geodesic tortuosity is possible in the multi-phase RNG model. More precisely, considering a sampling window of side length $\vN\ge1$, all shortest paths emanating from the bottom of the sampling window and ending at its top are contained in an $\vN^\valpha$-environment of the considered window, where $\valpha>0$ is an arbitrary positive number. This means that we will prove part (i) of Theorem~\ref{thm:edge}.



For this purpose, we first show that points in the Voronoi mollification of one of the $k$ relative neighborhood graphs are close to one of the edges of the graph. Then, we will see that with high probability (whp) there do not exist very long edges intersecting a given sampling window. Finally, to conclude the proof, we show that any pair of vertices of the relative neighborhood graph in an extended sampling window can be connected by a short path. We say that a family of events $\lbrace A_N, N \geq 1 \rbrace$ occurs \emph{whp} if $\P(A_N^c) \leq c_1 \, e^{-N^{c_2}}$ for some $c_1, c_2 > 0.$
\begin{lemma}
	\label{distToEdgeLem}
For every $\valpha>0$, with high probability, every point $x\in\vWpp{}{\vN}\cap\vXi_1$ can be connected within $\vXi_1$ to an edge of $\Rng(\vX\vda{1})$ by a line segment of length at most $N^\alpha$.
\end{lemma}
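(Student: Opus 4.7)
The plan is to combine a standard Poisson covering estimate with a direct $1$-Lipschitz argument on the distance functions entering the definition of the Voronoi mollification. First, I would cover $\vWpp{}{\vN}$ by a grid of axis-aligned cubes of side $\vN^\valpha/\sqrt{d}$, so that each such cube is contained in some ball of radius $\vN^\valpha$. The probability that a given cube contains no point of $\vX\vda{1}$ equals $\exp(-\vla_1 \vN^{d\valpha}/d^{d/2})$, and a union bound over the $O(\vN^{d(1-\valpha)})$ cubes shows that the event ``every cube contains at least one point of $\vX\vda{1}$'' fails with probability at most of order $\exp(-c\vN^{d\valpha})$. Hence, with high probability $\dist(x, \vX\vda{1}) \le \vN^\valpha$ for every $x \in \vWpp{}{\vN}$; since $\vX\vda{1} \subset \Rng(\vX\vda{1})$, the same bound transfers to $\dist_1(x) := \dist(x, \Rng(\vX\vda{1}))$.

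Now fix any $x \in \vWpp{}{\vN} \cap \vXi_1$. Because $\Rng(\vX\vda{1})$ is closed, the infimum $\dist_1(x)$ is attained at some $y^{\ast}$, which lies on some edge of $\Rng(\vX\vda{1})$. I would take the straight segment $[x, y^\ast]$ as the connecting curve; by the previous step, its length is $\dist_1(x) \le \vN^\valpha$. To verify that $[x, y^\ast] \subset \vXi_1$, abbreviate $\dist_j(\cdot) := \dist(\cdot, \Rng(\vX\vda{j}))$ and, for $t \in [0,1]$, set $y_t = x + t(y^\ast - x)$. On the one hand, since $y^\ast \in \Rng(\vX\vda{1})$,
\[
\dist_1(y_t) \;\le\; |y_t - y^\ast| \;=\; (1-t)\,\dist_1(x).
\]
On the other hand, each $\dist_j(\cdot)$ is $1$-Lipschitz and $x \in \vXi_1$ gives $\dist_j(x) \ge \dist_1(x)$, so for every $j \in \{1,\ldots,k\}$,
\[
\dist_j(y_t) \;\ge\; \dist_j(x) - |y_t - x| \;\ge\; \dist_1(x) - t\,\dist_1(x) \;=\; (1-t)\,\dist_1(x) \;\ge\; \dist_1(y_t).
\]
Thus $y_t \in \vXi_1$ for every $t \in [0,1]$, as required.

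I expect no serious obstacle. The Poisson covering bound is entirely standard and uses only that $\vX\vda{1}$ is a homogeneous Poisson point process of positive intensity; it is the step that produces the high-probability estimate with exponent exactly of the form appearing in the definition of \emph{whp}. The geometric step -- that the candidate segment stays inside $\vXi_1$ -- is essentially automatic once one notices that $\dist_1$ decreases linearly along $[x, y^\ast]$, whereas each competing $\dist_j$ can drop by at most the travelled distance; no combinatorial information about the RNG edges (for instance, ruling out long edges or exceptional configurations near $y^\ast$) is needed here, and that more delicate issue is deferred to the subsequent steps of the argument for Theorem~\ref{thm:edge}.
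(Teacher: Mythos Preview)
Your proposal is correct and follows essentially the same approach as the paper: both arguments use a Poisson covering of $\vWpp{}{\vN}$ by small sub-cubes to force $\dist(x,\vX\vda{1})\le \vN^\valpha$ with high probability. Your explicit $1$-Lipschitz verification that the nearest-point segment $[x,y^\ast]$ stays in $\vXi_1$ is a welcome addition that the paper leaves implicit.
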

\begin{proof}
	Let $1 \leq i \leq k$ and subdivide the sampling window $\vWpp{}{\vN}$ into sub-boxes of side length $\vN^\alpha/\lceil\vN^{\valpha/2}\rceil$. Then, whp each of these sub-boxes contains at least one point from $\vX\vda{1}$. In particular, the distance of $x$ to the closest edge of $\Rng(\vX\vda{1})$ is at most $\vN^{\alpha}$ as required.
\end{proof}

\begin{lemma}
	\label{edgeLengthLem}
For every $\valpha>0$, with high probability, every edge of $\Rng(\vX\vda{1})$ intersecting $\vWpp{}{\vN}$ is of length at most $N^\alpha$.
\end{lemma}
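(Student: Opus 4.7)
My plan is to bound the expected number of long edges by the Slivnyak--Mecke formula and then apply Markov's inequality.

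The starting observation is the following RNG characterization: a pair $\{x,y\}\subset X\vda{1}$ forms an edge of $\Rng(\vX\vda{1})$ if and only if the open \emph{lune}
$$L(x,y) \;:=\; b(x,|x-y|)\cap b(y,|x-y|)$$
contains no further point of $\vX\vda{1}$. Since $L(x,y)$ is the intersection of two balls of radius $|x-y|$ whose centres lie at distance $|x-y|$, its volume equals $c_d|x-y|^d$ for some constant $c_d>0$ depending only on $d$. Together with the Slivnyak theorem for Poisson processes, the probability that $\{x,y\}$ is an edge conditional on both points lying in $\vX\vda{1}$ equals $\exp(-\vla_1 c_d|x-y|^d)$.

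Next, I would localise the problem geometrically: if $[x,y]\cap \vWpp{}{\vN}\neq\es$, then each endpoint lies within distance $|x-y|$ of $\vWpp{}{\vN}$, so $x,y\in \vWpp{}{\vN}\oplus B(o,|x-y|)$, a set of volume at most $(N+2\ell)^{d-1}(1+2\ell)$ with $\ell=|x-y|$. Applying the Slivnyak--Mecke formula and passing to polar coordinates in the variable $y-x$, I obtain
\begin{align*}
\E\bigl[\#\{\text{edges of }\Rng(\vX\vda{1})\text{ of length}\ge N^\valpha\text{ intersecting }\vWpp{}{\vN}\}\bigr]
&\le \tfrac{\vla_1^{2}\omega_{d-1}}{2}\int_{N^\valpha}^{\infty}(N+2\ell)^{d-1}(1+2\ell)\,\ell^{d-1}e^{-\vla_1 c_d\ell^{d}}\,\d\ell,
\end{align*}
where $\omega_{d-1}$ is the surface measure of the unit sphere in $\R^d$.

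The one-dimensional integral is easily handled: the polynomial prefactor of order $\ell^{2d-1}$ (together with the factor $N^{d-1}$) can be absorbed into $e^{-\tfrac12\vla_1 c_d\ell^d}$ for $\ell\ge N^\valpha$ with $N$ large enough, after which the substitution $u=\ell^d$ yields a Gamma tail bounded by $C_1 N^{d-1}\exp(-C_2 N^{\valpha d})$ for constants $C_1,C_2>0$. Markov's inequality then upgrades this expectation bound to a probability bound on the existence of a long edge, and choosing, e.g., $c_2=\valpha d/2$ gives the desired $c_1 e^{-N^{c_2}}$ decay. The only real bookkeeping issue, and the step I expect to need the most care, is absorbing the polynomial prefactor coming from the volume of $\vWpp{}{\vN}\oplus B(o,\ell)$ into the super-exponential decay $e^{-c\ell^{d}}$ uniformly in $N$; since $d\ge 2$ and $\ell\ge N^\valpha\to\infty$, this is straightforward but must be made explicit to justify the whp conclusion with the precise constants used elsewhere in the section.
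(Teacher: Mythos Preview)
Your argument is correct and complete. The Slivnyak--Mecke computation is sound: the lune characterisation of RNG edges gives the void probability $\exp(-\lambda_1 c_d|x-y|^d)$, the localisation $x\in W_N\oplus B(o,|x-y|)$ is valid, and the resulting one-dimensional tail integral decays like $N^{d-1}e^{-cN^{\alpha d}}$, which is well within the paper's whp convention.

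The paper takes a different, more elementary route. It subdivides the enlarged window $W_{2N}$ into sub-boxes of side length of order $N^{\alpha/2}$ and uses only Poisson void probabilities (together with a union bound over polynomially many boxes) to show that whp every sub-box contains a point of $X^{(1)}$. On that event, any edge $[x,y]$ of length exceeding $N^\alpha$ intersecting $W_N$ would have its lune contain one of these occupied sub-boxes, yielding a blocking point $z$ and contradicting the RNG edge condition. This argument avoids the Slivnyak--Mecke machinery entirely and recycles the subdivision from the preceding lemma, at the cost of a slightly implicit geometric step (locating a sub-box inside the lune and inside $W_{2N}$). Your approach is more direct and gives sharper quantitative control on the expected number of long edges; the paper's approach is shorter and relies only on the most basic Poisson estimate.
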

\begin{proof}
	As in the proof of Lemma~\ref{distToEdgeLem}, we subdivide the sampling window $\vWpp{}{2\vN}$ into sub-boxes of side length $\vN^\alpha/\lceil\vN^{\valpha/2}\rceil$. Then, whp each of these sub-boxes contains at least one point from $\vX\vda{1}$. In particular, if $\vx,\vy$ are such that there is an edge between $\vx$ and $\vy$ in $\Rng(\vX\vda{1})$ such that the line segment $[x,y]$ intersects $\vW_N$, then $|\vx-\vy|\le N^\alpha$, as otherwise we could find $\vz\in \vX\vda{1}\cap\vW_{2N}$ with $\max\{|\vx-\vz|,|\vz-\vy|\}<|\vx-\vy|$.
\end{proof}

\begin{proof}[Proof of part (i) of Theorem~\ref{thm:edge}]
At first, we show that there exists an almost surely finite random variable $\vN_0 \ge 1$ such that the event $\vRpp{\valpha}{\vN}$ occurs for all $\vN \ge \vN_0$. The claim can be proven using an extension of the descending-chain approach from~\citep[Section 3]{aldous.2009}. Here, we recall from~\citep{daley.2005} that a sequence $\{\vx_1,\vx_2,\ldots,\vx_n\}$ of points in $\R^d$ forms a \emph{descending chain} if and only if the sequence $\{|\vx_i-\vx_{i+1}|\}_{1\le i\le n-1}$ of distances of subsequent elements is strictly decreasing. Moreover, we say that this chain is \emph{$b$-bounded} if $|\vx_1-\vx_2|\le b$. In~\citep[Lemma 10]{aldous.2009} it has been observed that there exists a tight connection between relative neighborhood graphs and descending chains. More precisely, if $\vphi\subset\R^d$ is a locally finite set and $r>0$, $\vx,\vy\in\vphi$ are such that $\vx$ and $\vy$ are not connected by a path in $\Rng(\vphi)\cap\vB_{r}(\vx)$, then there exists an $|\vx-\vy|$-bounded descending chain in $\vphi$ starting at $\vx$ and leaving $\vB_{r/2}(\vx)$.
In particular, if whp there does not exist an $\vN^{\alpha/(4d)}$-bounded descending chain starting in 
	$\vWpp{}{\vN}\oplus [-\vN^{\alpha/2},\vN^{\alpha/2}]$
	and consisting of more than $\vN^{\alpha/2}$ hops, then Lemmas~\ref{distToEdgeLem} and~\ref{edgeLengthLem} provide the desired short connection path. Now, the computations in~\citep[Section 3.2]{daley.2005} show that the probability for the existence of an $\vN^{\alpha/(4d)}$-bounded descending chain starting in $\vWpp{}{\vN}\oplus [-\vN^{\alpha/2},\vN^{\alpha/2}]$ and consisting of more than $K=\vN^{\alpha/2}$ hops is bounded from above by 
	$$\nu_d(\vWpp{}{\vN}\oplus [-\vN^{\alpha/2},\vN^{\alpha/2}])\frac{(\nu_d(B(o,1))\vN^{\alpha/4})^K}{K!}\le2\nu_d(\vWpp{}{\vN})\Big(\frac{3\nu_d(B(o,1))\vN^{\alpha/4}}{K}\Big)^K.$$
	Since the latter expression tends to $0$ as $\vN\to\infty$, we conclude the proof of the first assertion in part (i) of Theorem~\ref{thm:edge}. We now show that $\E [\vgamma_{\vXi_1}(o, \vWpO{\vl}) \one_{o \in \St_{\vXi_1}( \vWpO{\vl})}] < \infty$. Let $A_N$ denote the event that $o \notin \St_{\vXi_1}( \vWpO{\vl})$ or the shortest path from $o$ to $\vWpO{\vl}$ through $\vXi_1$ is contained in $ [-2N, 2N]^{d-1} \times [-N,1]$. We obtain 
	\begin{equation}\label{proof15_whp}
	\P(A^c_N) \leq c_1 e^{-N^{c_2}}
	\end{equation} 
	 for some $c_1, c_2 > 0$ as a consequence of Lemma~\ref{distToEdgeLem} and the first part of the proof. Define $N(r) = r^{1/(d+2)}.$ Then,
	\begin{align*}
	\E [\vgamma_{\vXi_1}(o, \vWpO{\vl}) \one_{o \in \St_{\vXi_1}( \vWpO{\vl})}] = &\int_{0}^{\infty} \P(\lbrace \vgamma_{\vXi_1}(o, \vWpO{\vl}) \one_{o \in \St_{\vXi_1}( \vWpO{\vl})} > r \rbrace \cap A_{N(r)}) \, \mathrm{d}r \\
	& + \int_{0}^{\infty} \P(\lbrace \vgamma_{\vXi_1}(o, \vWpO{\vl}) \one_{o \in \St_{\vXi_1}( \vWpO{\vl})} > r \rbrace \cap A^{c}_{N(r)}) \, \mathrm{d}r.
	\end{align*}
	Due to (\ref{proof15_whp}) the second integral on the right-hand side is finite. Thus it is sufficient to show that the first integral is finite. If the shortest path from $o$ to $\vWpO{\vl}$ through $\vXi_1$ is contained in $[-2N(r), 2N(r)]^{d-1}\times[-N(r),1]$, Lemma~\ref{edgeLengthLem} ensures that $\vgamma_{\vXi_1}(o, \vWpO{\vl}) \one_{o \in \St_{\vXi_1}( \vWpO{\vl})}$ is bounded from above by the sum of lengths of those edges, where at least one vertex is contained in $[-3N(r), 3N(r)]^d.$ Furthermore, the coordination number, i.e. the degree, of a vertex in the Poisson RNG is uniformly bounded by some constant $c_d$ \citep{jaromczyk.1992}. Thus
	\begin{align*}
		\int_{0}^{\infty} \P(\lbrace \vgamma_{\vXi_1}(o, \vWpO{\vl}) \one_{o \in \St_{\vXi_1}( \vWpO{\vl})} &> r \rbrace \cap A_{N(r)}) \, \mathrm{d}r \leq \\
		&\int_{0}^{\infty} \P(c_d 6\sqrt{d} N(r) \#(\vX^{(1)} \cap [-3N(r), 3N(r)]^d) \geq r) \, \mathrm{d}r,
	\end{align*}
	where $\#A$ denotes the number of elements contained in a discrete set $A \subset \R^3$. Using Markov's inequality, we obtain
	\begin{align*}
	\int_{0}^{\infty} &\P(\lbrace \vgamma_{\vXi_1}(o, \vWpO{\vl}) \one_{o \in \St_{\vXi_1}( \vWpO{\vl})} > r \rbrace \cap A_{N(r)}) \, \mathrm{d}r \leq 
	\int_{0}^{\infty} e^{-\frac{r^{(d+1)/(d+2)}}{c_d 6 \sqrt{d}}} \E e^{\#(\vX^{(1)} \cap [-3N(r), 3N(r)]^d)}\, \mathrm{d}r \\
	&= 	\int_{0}^{\infty} e^{-\frac{r^{(d+1)/(d+2)}}{c_d 6 \sqrt{d}}} e^{6^d r^{d/(d+2)}\vla_1(e - 1)}\, \mathrm{d}r \leq  c_4 \, \int_{0}^{\infty} e^{- c_5 r^{1/(d+2)}}\, \mathrm{d}r < \infty
	\end{align*}
	for some $c_4, c_5 > 0.$
\end{proof}

\subsection{Constrictivity}
\label{constRngSec}
Throughout this section, we assume that $\vd = 3.$ Recall that in Section~\ref{tortRngSec}, we have investigated shortest paths in the Poisson relative neighborhood graph. We have seen that only an asymptotically negligible amount of plus sampling is required to compute the estimators introduced in Section~\ref{estSec}. For the proof of part (ii) of Theorem~\ref{thm:edge}, which is given at the end of this section, the approach of Section~\ref{tortRngSec} has to be refined since the information coming from the relative neighborhood graph does not capture the bottleneck effects that are reflected in the notion of constrictivity. Although the edges of the relative neighborhood graph still play an important role, now we additionally need to take into account that due to closeness to a different phase some of these edges may no longer be available. This leads to a spatially correlated continuum percolation model that is investigated in the present section. In analogy to the relation between geodesic tortuosity and  the relative neighborhood graph, a deeper understanding of the geometry of paths in this continuum percolation model is critical to control the amount of plus sampling required to compute the estimator for $\rmin{}{}$ given in (\ref{eq:rmin_est_alpha}).

Next, we provide a rigorous definition of the spatially correlated percolation model described above. To simplify the presentation, we assume that $k=2$ and let $\vX\vda{1},\vX\vda{2}$ denote independent homogeneous Poisson point process with some intensities $\vla_1,\vla_2>0$, where $\vXi_i=\Vor(\vX\vda{i},\{\vX\vda{1},\vX\vda{2}\})$. For a constrictivity parameter $r\ge0$, we say that two points $\vx,\vy\in\vX\vda{i}$ are \emph{$r$-connected} if they are in the same connected component of $\vXi_i^{\ominus r}$. Moreover, we say that the set $\vXi_i$ is \emph{$r$-percolating} if $\vXi_i^{\ominus r}$ contains an unbounded connected component. Finally, we introduce the \emph{critical radius of percolation}
$\rc=\sup\{r\ge 0:\, \P(\vXi_1\text{ is $r$-percolating})>0\}$
as the supremum over all radii $r$ such that $\vXi_1$ is $r$-percolating with positive probability. Although connectivity of the Poisson relative neighborhood graph implies that $\rc\ge0$, it is not clear whether $\rc$ is non-trivial in the sense that $0<\rc<\infty$. In fact, the non-triviality of $\rc$ will be a corollary of the results established in this section.

However, for the problem of constrictivity estimation the non-triviality of $\rc$ by itself would be of limited use. For instance, in the sub-critical regime, i.e., if $r>\rc$, it is only known that there are no unbounded connected components. However, a priori, this does not exclude having two points that are spatially close, but are connected only by a very long path. It could also happen that two points are in different connected components, but each of these components is very large.
For the problem of minimizing the amount of plus sampling required for constrictivity estimation, such configurations are highly undesirable, since they lead to simulations to be carried out in large windows.

Therefore, in the present section, we provide quantitative results on the global geometry of paths thereby going beyond the mere existence of non-trivial sub- and super-critical regimes. More precisely, we first establish existence of a sub-critical regime with the additional property that observing long paths becomes exponentially unlikely. Second, we show that there exists a supercritical regime with a unique unbounded connected component. Inside this connected component, points can be connected by paths growing at most linearly in the Euclidean distance of the points whp. 

The percolation analysis described above allows us to control path lengths appearing in the constrictivity estimation if the constrictivity parameter $r$ is either large or close to $0$. In analogy to the classical model of Bernoulli percolation~\citep{grimmett.1999} it is natural to conjecture that this behavior prevails in fact for all values of $r$ with the possible exception of the critical threshold $\rc$. However, a rigorous proof requires substantially more refined arguments from percolation theory and is outside the scope of the present work. Instead, we provide ample numerical evidence supporting this conjecture.


\subsubsection{Subcritical regime}
To begin with, we analyze the subcritical regime and show that all connected components of $\vXir=\vXir_1$ intersecting a given large sampling window are much smaller than the size of the sampling window with high probability. In the following, let $\vQ_\vr(z) = [-r/2,r/2]^{3} + z$ for each $\vr > 0, \vz \in \Z^{3}.$ Below, we will use the abbreviation $\vQ_\vr = \vQ_\vr(o)$ for cubes centered at the origin.

\begin{theorem}
\label{subCritProp}
If $r>0$ is sufficiently large, then for every $\valpha > 0$ whp each connected component of $\vXir$ intersecting $\vWpp{}{\vN}$ has diameter at most $\vN^\valpha$.
\end{theorem}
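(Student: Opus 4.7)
The plan is to reduce the claim to an exponential-decay estimate for the vacant set of the Boolean model driven by $\vX\vda{2}$ alone, exploiting a simple geometric containment that makes the first Poisson process irrelevant to the bound.

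The first step is the containment. If $y \in \vXi_1^{\ominus r}$, then $B(y,r) \subseteq \vXi_1$, and since every point of $\vX\vda{2}$ is a vertex of $\Rng(\vX\vda{2})$ and therefore (almost surely) lies in $\mathring{\vXi}_2$, the ball $B(y,r)$ must be disjoint from $\vX\vda{2}$. Consequently
$$\vXi_1^{\ominus r} \;\subseteq\; V_r \;:=\; \bigl\{y \in \R^3 : B(y,r) \cap \vX\vda{2} = \es\bigr\},$$
the vacant set of the Boolean model with grains $\{B(p,r)\}_{p \in \vX\vda{2}}$. Because any continuous path in $\vXi_1^{\ominus r}$ is also a path in $V_r$, the diameter of a connected component of $\vXi_1^{\ominus r}$ is bounded above by the diameter of the containing component of $V_r$, and so it suffices to prove the claimed diameter bound for $V_r$.

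The second step is a quantitative subcriticality estimate for $V_r$ when $r$ is large. Rescaling $\R^3$ by a factor $r$ identifies $V_r$ with the vacant set of the unit-radius Boolean model generated by a Poisson point process of intensity $\vla_2 r^3$, so increasing $r$ drives the occupied phase arbitrarily deep into supercriticality and the vacant phase arbitrarily deep into subcriticality. A standard renormalization argument then applies: partition $\R^3$ into unit cubes, call a cube \emph{good} if it contains at least one point of the driving Poisson process, note that goodness is a range-$1$ functional, and invoke Liggett--Schonmann--Stacey stochastic domination to couple the bad cubes from above with an independent Bernoulli site percolation of arbitrarily small density. Standard subcritical estimates for Bernoulli percolation then yield
$$\P\bigl(\mathrm{diam}(\text{component of $V_r$ through $y$}) \ge s\bigr) \le c_1 \exp(-c_2 s), \qquad s \ge 1,$$
for constants $c_1, c_2 > 0$ depending on $\vla_2$ and $r$, with $c_2 \to \infty$ as $r \to \infty$.

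To conclude, if some component of $\vXi_1^{\ominus r}$ meeting $\vWpp{}{\vN}$ had diameter exceeding $\vN^\valpha$, then some integer lattice point $z \in \Z^3 \cap (\vWpp{}{\vN} \oplus [-1,1]^3)$ would lie in a component of $V_r$ of diameter at least $\vN^\valpha - O(1)$; since there are only $O(\vN^{2})$ such probes, a union bound yields total failure probability $O(\vN^{2}) \exp(-c_2 \vN^\valpha)$, which decays exponentially in $\vN$ and is even summable. The main obstacle in this plan is the quantitative subcriticality step: the containment and the union bound are immediate, but extracting an exponential diameter decay for vacant components of the three-dimensional Boolean model in the deeply supercritical regime requires either citing a black-box result from continuum percolation or reproducing the coarse-graining comparison with subcritical Bernoulli percolation through Liggett--Schonmann--Stacey. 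Since the dependence is of bounded range once $r$ is fixed, no new ingredient beyond this standard machinery should be required.
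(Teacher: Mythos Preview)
Your approach is essentially the same as the paper's: both observe that $\vXir$ is contained in the vacant region $\{y:\, B(y,r)\cap \vX\vda{2}=\es\}$, discretize into small cubes declared closed when they contain an $\vX\vda{2}$-point, and then invoke exponential decay for subcritical Bernoulli site percolation together with a union bound over the window. Two minor remarks: the cube side length must be at most $r/\sqrt{3}$ (the paper takes $r/3$) so that a closed cube is genuinely disjoint from $V_r$, and Liggett--Schonmann--Stacey is unnecessary here since closedness of disjoint cubes is already an i.i.d.\ family of events for a Poisson process.
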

\begin{proof}
    The assertion is shown by comparing continuum percolation of $\vXir$ to an appropriately constructed discrete-site percolation process. More precisely, for a site $z\in\Z^3$ we say that $z$ is \emph{$X^{(2)}$-closed} if and only if $\vX\vda{2}\cap\vQ_{r/3}(rz/3)\ne\es$. We also say that the associated cube $Q_{r/3}(rz/3)$ is $X^{(2)}$-closed. If a cube is not $X^{(2)}$-closed it is said to be \emph{$X^{(2)}$-open}.
	An elementary geometric argument shows that $\vXir$ does not intersect any $X^{(2)}$-closed cubes. In particular, the connected component of $\vXir$ at any given point in $\vXir\cap\vWpp{}{\vN}$ is contained in the connected component of $X^{(2)}$-open cubes containing that point. Moreover, since $\vX\vda{2}$ is a Poisson point process, different sites are $X^{(2)}$-closed independently of each other and the probability for a given site to be $X^{(2)}$-closed is arbitrarily close to 1 provided that $r$ is sufficiently large. Hence, we may apply the exponential decay of cluster sizes in Bernoulli percolation~\citep[Theorem 6.75]{grimmett.1999}. In particular, the probability that there exists an $X^{(2)}$-open cube $\vQ_{r/3}(rz/3)$ intersecting $\vWpp{}{\vN}$ whose connected component is of diameter at least $\vN^\alpha$ tends to $0$ exponentially fast as $\vN\to\infty$. 
\end{proof}

\subsubsection{Supercritical regime}
Next, we investigate the supercritical regime, i.e., the scenario when the constrictivity parameter $r$ is close to $0$. Our main result shows that $\vXir$ splits up into two domains exhibiting fundamentally different properties. On the one hand, there exists a unique unbounded connected component $\Cinf\subset\vXir$. This connected component features good connectivity properties. Indeed, any two points in this component can be connected by a path whose diameter grows at most linearly in the Euclidean distance. On the other hand, the diameters of finite connected components exhibit exponentially decaying tail probabilities. 

\begin{theorem}
	\label{supCritProp}
	If $r>0$ is sufficiently small, then with probability 1, there exists a unique unbounded connected component $\mc{C}_\infty$ in $\vXir$. Moreover, for every $\valpha>0$ it holds whp that if $x\in\vWpp{}{\vN}$ is such that 
\begin{enumerate}
	\item $x\not\in\mc{C}_\infty$, then the connected component of $\vXir$ containing $x$ has diameter at most $\vN^\alpha$. 
	\item $x\in\mc{C}_\infty$, then $x$ can be connected in $\vXir$ to $\vWpO{\vl}$ by a path of diameter at most $\vN^\alpha$.
\end{enumerate}
\end{theorem}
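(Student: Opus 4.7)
The plan is a block renormalisation that couples $\vXi_1^{\ominus r}$ with highly supercritical Bernoulli site percolation on $\Z^3$, and then transfers the classical theorems (Burton--Keane uniqueness, exponential decay of finite cluster diameters, and an Antal--Pisztora linear chemical-distance bound) from the discrete model back to $\vXir$. Fix a block scale $L \gg r$, and declare $z \in \Z^3$ to be \emph{good} if: (a) each face of the inner cube $\vQ_L(Lz)$ is crossed by an edge of $\Rng(\vX\vda{1})$, and the crossing edges lie in a common connected subgraph of $\Rng(\vX\vda{1}) \cap \vQ_{2L}(Lz)$; (b) every such crossing edge lies at Euclidean distance at least $2r$ from $\vX\vda{2}$, so that a $2r$-tube around the edge lies inside $\vXi_1$ and the $r$-tube around it lies inside $\vXi_1^{\ominus r}$; and (c) no edge of $\Rng(\vX\vda{1})$ meeting $\vQ_L(Lz)$ has length exceeding $L/10$, so that by the argument of Lemma~\ref{edgeLengthLem} the structure of $\Rng(\vX\vda{1}) \cap \vQ_L(Lz)$ is determined by $\vX\vda{1}$ restricted to $\vQ_{3L}(Lz)$. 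A short geometric argument shows that whenever $z$ and $z'$ are good $\Z^3$-neighbours, the $r$-tubes around their crossing RNG-edges lie in $\vXir$ and are connected inside $\vXir$.

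By construction, goodness of $z$ is a function of $(\vX\vda{1},\vX\vda{2})$ in $\vQ_{3L}(Lz)$, so sites at $\ell^\infty$-distance at least four are jointly independent. Standard Poisson volume estimates together with the edge-length bound from Lemma~\ref{edgeLengthLem} and the descending-chain estimates of Section~\ref{tortRngSec} show that the probability $p(r,L)$ of being good tends to $1$ as $r \to 0$ for each fixed $L$. By the Liggett--Schonmann--Stacey domination theorem, the good-site field then stochastically dominates an i.i.d.\ Bernoulli site field on $\Z^3$ with parameter $\tilde p(r,L) \to 1$ as $r \to 0$, which places it in the highly supercritical regime of site percolation on $\Z^3$, well above the Grimmett--Marstrand slab threshold.

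In this regime, Burton--Keane yields a.s.\ uniqueness of the infinite good-site cluster; pulled back through the coupling this produces a unique unbounded component $\Cinf$ of $\vXir$, since a second unbounded component would force either an unbounded family of $\vX\vda{2}$-induced obstructions or a long descending chain in $\vX\vda{1}$, each of exponentially small probability. The classical exponential decay of finite-cluster diameters~\citep{grimmett.1999}, combined with a union bound over the $O((N/L)^3)$ blocks meeting $\vWpp{}{\vN}$, yields assertion (i). For assertion (ii), an Antal--Pisztora-style linear chemical-distance bound transferred through the block coupling shows that whp any $x \in \vWpp{}{\vN} \cap \Cinf$ can be connected inside $\vXir$, with path diameter $O(|x-y|)$, to any $y \in \Cinf$ with $|x-y| \ge N^{\alpha/2}$; since $\Cinf \cap \vWpO{}$ has positive planar density by ergodicity, whp such a $y$ exists on $\vWpO{}$ within Euclidean distance $N^{\alpha/2}$ of $x$, giving a total path diameter at most $N^{\alpha/2} \ll N^\alpha$.

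The principal obstacle is reconciling the correlated structure of $\vXir$ with the finite-range, strongly supercritical input that these classical percolation theorems require. The dependence in $\vXir$ is not truly finite-range because a Voronoi cell may in principle stretch arbitrarily far, which is precisely why clause (c) of the goodness definition is needed: on that whp event, the geometry of $\vXir$ inside $\vQ_L(Lz)$ is determined by the Poisson points in $\vQ_{3L}(Lz)$, giving the finite-range structure that powers the Liggett--Schonmann--Stacey comparison. A related subtlety is that Antal--Pisztora is usually stated for i.i.d.\ Bernoulli percolation, but its renormalisation proof goes through verbatim for finite-range dependent fields whose one-dimensional marginals exceed a threshold close to $1$, which is exactly the situation produced by the coupling.
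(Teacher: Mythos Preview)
Your overall architecture---coarse-graining, Liggett--Schonmann--Stacey domination, then the standard percolation toolbox---matches the paper's. But your goodness definition omits the one ingredient that carries the theorem, and without it assertions (i) and the uniqueness of $\Cinf$ do not follow.

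Your conditions (a)--(c) guarantee that the $r$-tubes around crossing RNG-edges lie in $\vXir$ and glue across neighbouring good blocks, which produces an unbounded component of $\vXir$. What they do \emph{not} guarantee is that an arbitrary point $x\in\vXir\cap\vQ_L(Lz)$ is connected, inside $\vXir\cap\vQ_{3L}(Lz)$, to that tube network. The paper builds this in as condition {\bf (C)}: $\vXir\cap\vQ_L(Lz)$ must lie in a single connected component of $\vXir\cap\vQ_{3L}(Lz)$. Without it, a point $x\in\vXir\cap\vWpp{}{\vN}$ with $x\notin\Cinf$ could belong to a component that threads through arbitrarily many good blocks while never touching an RNG edge, so the exponential decay of finite good-site clusters gives you no handle on its diameter. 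The same gap undermines your uniqueness claim: a second unbounded component of $\vXir$, disjoint from the RNG tubes, is not ruled out by anything you wrote---your appeal to ``a long descending chain'' or ``$\vX\vda{2}$-induced obstructions'' does not connect to any property of such a component.

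Establishing {\bf (C)} (or rather its robust version {\bf (C')}) is by far the hardest step in the paper, not a ``short geometric argument''. The obvious idea---join $x$ to its nearest point on $\Rng(\vX\vda{1})$ by a line segment---fails because eroded cells of the line-segment Voronoi tessellation are non-convex: that segment need not stay in $\vXir$. The paper devotes all of Appendix~B to this, first proving (Lemma~\ref{intSectLem}, Corollary~\ref{intSectCor}) that quadruple intersections of line-segment Voronoi cells are a.s.\ locally finite, and then constructing a two-stage path in $\vXir$ from $x$ to $\Rng(\vX\vda{1})$ via a carefully chosen intermediate point $x^*=x+Krv$. Your proposal contains nothing corresponding to this analysis. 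A secondary point: in (b) the relevant distance is to $\Rng(\vX\vda{2})$, not to the point process $\vX\vda{2}$, since an edge of $\Rng(\vX\vda{2})$ can come within $2r$ of your crossing edge even when all vertices of $\vX\vda{2}$ are far away.
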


The proof of Theorem~\ref{supCritProp} is based on a classical coarse-graining argument comparing percolation properties of $\vXir$ to those in a suitably discretized model. More precisely, for $\vL\ge1$ and $r>0$ we say that a site $z\in\Z^3$ is \emph{$(\vL,r)$-good} if the intersection $\vXir\cap\vQ_{\vL}(\vL z)$
\begin{enumerate}
	\item[{\bf (C)}] is contained in a connected component of $\vXir\cap\vQ_{3\vL}(\vL z)$ (connectivity condition), and 
	\item[{\bf (O)}] features non-empty intersections with each of the boundary faces of the cube $\vQ_{\vL}(\vL z)$ (omnidirectionality condition).

\end{enumerate}
A site $z\in\Z^3$, which is not $(\vL,r)$-good, is called \emph{$(\vL,r)$-bad}. For $z\in\Z^3$ we let $\vC(z)$ denote the $*$-connected component of $(\vL,r)$-bad sites which contains $z$. Following~\citep{antal.1996}, a \emph{$*$-connected} component is a connected component of a graph whose vertex set is a subset of $\Z^3$ and edges consisting of pairs of vertices $z,z'\in\Z^3$ satisfying $|z-z'|_\infty\le1$. Here $|\cdot|_\infty$ denotes the maximum norm in the Euclidean space. Our proof of Theorem~\ref{supCritProp} is based on the following exponential moment bound for $\vC(z)$, whose proof is deferred to later parts of this section.
\begin{proposition}
	\label{expMomBoundProp}
	There exist $\vL\ge 1$ and $r>0$ such that $\E\exp(64\#\vC(o))<\infty$.
\end{proposition}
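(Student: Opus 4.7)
The plan is a standard coarse-graining/Peierls argument in three steps. First, we will make the marginal probability $p(L,r) := \P(o \text{ is } (L,r)\text{-bad})$ arbitrarily small by choosing $L$ large and then $r>0$ small. Second, we will argue that the badness indicator field on $\Z^3$ has essentially finite-range dependence. Third, we invoke the Liggett--Schonmann--Stacey stochastic domination by Bernoulli site percolation, combined with a Peierls-type cluster count, to deduce the desired exponential moment of $\#\vC(o)$.

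For the marginal bound we treat conditions (C) and (O) separately. For the omnidirectionality condition (O), a sub-box decomposition of each face of $\vQ_L(Lz)$ together with the descending-chain bound from the proof of part~(i) of Theorem~\ref{thm:edge} shows that with high probability each boundary face is transversally crossed by edges of $\Rng(\vX\vda{1})$ whose line segments lie well inside $\vXi_1$; provided $r$ is smaller than the typical distance from these edges to $\vX\vda{2}$, a tube around them persists in $\vXi_1^{\ominus r}$ and meets every face, and the probability of failure is exponentially small in $L$. For the connectivity condition (C), the same descending-chain estimate produces, whp, paths in $\Rng(\vX\vda{1})\cap \vQ_{3L}(Lz)$ joining any two vertices lying in $\vQ_{L}(Lz)$, with each edge of length at most $L^{1/8}$, say. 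The Voronoi mollification lifts these to connecting tubes inside $\vXi_1\cap \vQ_{3L}(Lz)$, and a union bound over the finite collection of connecting edges shows that, for $r$ small, each tube survives the erosion of an $r$-neighborhood of $\vX\vda{2}$ with probability tending to $1$ as $r\to0$, using the independence of $\vX\vda{1}$ and $\vX\vda{2}$.

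Next we account for the fact that the Voronoi mollification is a global object. Using the edge-length bound of Lemma~\ref{edgeLengthLem} applied to an enlarged box $\vQ_{(3+\delta)L}(Lz)$, the intersection $\vXi_1^{\ominus r} \cap \vQ_{3L}(Lz)$ depends only on $\vX\vda{1} \cup \vX\vda{2}$ restricted to this enlargement up to an exceptional event of probability at most $c_1 e^{-c_2 L}$. This yields an (approximately) $k$-dependent field of badness indicators with $k = k(L)$, the approximation defect again being exponentially small in $L$ and absorbed into the marginal estimate.

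The Peierls step is then routine. By Liggett--Schonmann--Stacey, a $k$-dependent Bernoulli-type field with marginal at most $p$ is stochastically dominated by a product Bernoulli field on $\Z^3$ with parameter $q(p) \to 0$ as $p \to 0$. Since the number of $*$-connected subsets of $\Z^3$ of size $n$ containing the origin is bounded by $C^n$ for a universal constant $C$, each of them is dominated-bad with probability at most $q(p)^n$, so
\[
\E \exp(64 \,\#\vC(o)) \;\le\; \sum_{n \ge 0} e^{64 n}\, C^n\, q\bigl(p(L,r)\bigr)^n
\]
up to a negligible term from the defect event. Fixing $L$ large enough that (O) and the dependence-range truncation have small error, and then taking $r$ small enough to force (C) to hold with high probability, makes $q(p(L,r))$ arbitrarily small, so that $e^{64}\,C\,q(p(L,r))<1$ and the series converges. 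I expect the main obstacle to be the quantitative verification of (C): one must couple the descending-chain connectivity in $\Rng(\vX\vda{1})$ to the erosion step, ensuring that the constructed path is not severed by the $r$-neighborhood of $\vX\vda{2}$, which is what forces the joint regime of large $L$ and small $r$.
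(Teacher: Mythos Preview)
Your overall architecture matches the paper's: introduce a locality condition to obtain finite-range dependence, push the marginal bad-probability to zero by taking $L$ large and then $r$ small, apply Liggett--Schonmann--Stacey, and finish with a Peierls count. The paper makes the locality step cleaner by explicitly adding a stability condition {\bf (S')} (each sub-box of $Q_{7L}$ contains a point of every $X^{(i)}$) and working with the resulting \emph{robustly $(L,r)$-good} sites, which are exactly $7$-dependent; your ``approximately $k$-dependent with an exponentially small defect'' variant can be made rigorous the same way, so this is a stylistic rather than substantive difference.

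There is, however, a genuine gap in your treatment of condition {\bf (C)}. You verify that $\Rng(X^{(1)})\cap Q_{2L}$ is connected inside $Q_{3L}$ and that small tubes around its edges survive the $r$-erosion. But {\bf (C)} requires that \emph{all} of $\Xi_1^{\ominus r}\cap Q_L$ lie in a single connected component of $\Xi_1^{\ominus r}\cap Q_{3L}$, not just the points on or near the skeleton. A point $x\in\Xi_1^{\ominus r}\cap Q_L$ lying in the bulk of a Voronoi-mollification cell still has to be joined to the RNG, and the obvious move---draw the straight segment from $x$ to its nearest skeleton point---can fail: after erosion by $r$ the cells are no longer convex (the boundaries between line-segment Voronoi cells are pieces of quadrics, not planes), so the segment may leave $\Xi_1^{\ominus r}$. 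Your closing remark anticipates trouble with {\bf (C)}, but misidentifies it as the risk of RNG paths being severed; that part is easy once $r$ is small. The actual obstruction is this off-skeleton connection step, and the paper devotes an entire appendix (its Appendix~B) to a delicate geometric analysis of the line-segment Voronoi tessellation---controlling quad points and constructing a two-stage path that first moves $x$ along a carefully chosen direction before dropping to the skeleton---to close it. Without an argument of this kind, your proof of {\bf (C)} is incomplete.
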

To simplify terminology, we call a site bad if it is $(\vL,r)$-bad with the parameters $\vL$, $r$ appearing in Proposition~\ref{expMomBoundProp}. As an important corollary, we note that Proposition~\ref{expMomBoundProp} brings us into a position where we can apply the classical Peierls argument \citep{grimmett.1999}, in the sense that there exists only a finite number of bad $*$-connected components separating the origin from infinity. That is, there exists only a finite number of $*$-connected bad sets $A\subset\Z^3$ such that $o$ is not contained in the unbounded connected component of $\Z^3\setminus A$. More precisely, writing $E_k$ for the event that the origin is separated from infinity by a $*$-connected bad set of diameter at least $k$, we will show that the probability of $E_k$ decays exponentially in $k$.

\begin{corollary}
	\label{peiCor}
There exist $\vL\ge 1$ and $r>0$ such that 
	$$\limsup_{k\to\infty}k^{-1}\log\P(E_k)<0.$$
\end{corollary}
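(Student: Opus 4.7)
The plan is a Peierls-type argument: I would convert the exponential moment bound of Proposition~\ref{expMomBoundProp} into a tail estimate for $\#\vC(z)$, localize a witnessing separator by its intersection with an axial ray through the origin, and finish with a union bound. To set up, let $\vL,r$ be as in Proposition~\ref{expMomBoundProp} and put $M=\E\exp(64\#\vC(o))<\infty$. Since the underlying Poisson processes are $\Z^3$-stationary in law and both the $(\vL,r)$-badness and the $\vC(\cdot)$ construction are translation-equivariant, one has $\E\exp(64\#\vC(z))=M$ for every $z\in\Z^3$, so Markov's inequality gives $\P(\#\vC(z)\ge m)\le M e^{-64m}$ uniformly in $z$ and $m$.

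On the event $E_k$, I would fix a $*$-connected bad separator $A$ of $o$ from $\infty$ with $\text{diam}(A)\ge k$. Because $A$ separates $o$ from infinity, every infinite nearest-neighbor path from $o$ must meet $A$; in particular both the positive and negative first-coordinate rays through $o$ do so. Setting $n=\min\{j\ge 0:(j,0,0)\in A\}$, $z^*=(n,0,0)$, and defining $m\ge 0$ analogously on the negative ray, I would argue that since $A$ is $*$-connected and each $*$-step changes the $\ell^\infty$-norm by at most $1$, $\#A\ge n+m+1\ge n+1$; simultaneously the diameter hypothesis together with $*$-connectedness forces $\#A\ge k+1$. As $z^*\in A$ is bad and $A\subset\vC(z^*)$, these combine to $\#\vC(z^*)\ge\max(k+1,n+1)$.

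A union bound over $n$ combined with the Markov estimate then yields
\begin{equation*}
\P(E_k)\;\le\;\sum_{n\ge 0}\P\!\bigl(\#\vC((n,0,0))\ge\max(k+1,n+1)\bigr)\;\le\;M\sum_{n\ge 0}e^{-64\max(k+1,n+1)},
\end{equation*}
and splitting the sum at $n=k$ bounds it by $C(k+1)e^{-64k}$, producing the required $\limsup_{k\to\infty}k^{-1}\log\P(E_k)\le -64<0$. The main obstacle, which I expect to be modest, is the deterministic step that any $*$-connected set separating $o$ from $\infty$ in $\Z^3$ meets every axial nearest-neighbor ray from $o$; this is immediate once one pins down the convention that ``separating $o$ from $\infty$'' means the nearest-neighbor component of $o$ in $\Z^3\setminus A$ is bounded, as is implicit in the Peierls arguments of~\citep{grimmett.1999}. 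No fine-tuning of the decay exponent is needed because the constant $64$ in Proposition~\ref{expMomBoundProp} was chosen to dominate the $O(k)$ combinatorial factor appearing in the union bound.
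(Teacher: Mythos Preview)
Your argument is correct. Both your proof and the paper's are Peierls-type arguments built on the exponential moment bound of Proposition~\ref{expMomBoundProp}, but the localization step differs. The paper runs a union bound over \emph{all} $z\in\Z^3$, invokes the geometric fact $\#A\ge|z|_\infty$ for every $z$ in a separating $*$-connected set, and then controls the inner sum via Penrose's lattice-animal count (at most $2^{27m}$ $*$-connected sets of size $m$ containing a given site); this is why the constant $64$ in Proposition~\ref{expMomBoundProp} is chosen to dominate $27\log 2$, yielding the final rate $\tfrac12(27\log2-64)$. You instead anchor the separator at its first intersection with a single axial ray, which reduces the union bound to a one-dimensional sum over $n\ge0$ and dispenses with the lattice-animal combinatorics entirely. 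Your route is more elementary and produces the sharper rate $-64$, at the cost of singling out a coordinate direction. Both approaches rest on the same deterministic input---that a $*$-connected set separating $o$ from infinity must meet every infinite nearest-neighbor path from $o$, and that $*$-connectedness forces $\#A$ to dominate the $\ell^\infty$-diameter---so the ``modest obstacle'' you flag is indeed routine under the paper's definition of separation.
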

\begin{proof}
	First, if $z$ is an arbitrary site in $A$, then $A$ can separate the origin from infinity only if $\#A\ge|z|_\infty$. Therefore, using that for $m\ge1$ the number of $*$-connected sets having $m$ elements and containing the origin is at most $2^{27m}$~\citep[Lemma 9.3]{penrose.2003}, the expected number of bad sets of diameter at least $k$ separating the origin from infinity is bounded from above by
	\begin{align*}
		\sum_{z\in\Z^3}\sum_{\substack{A\ni z\text{ is $*$-connected}\\ \#A\ge\max\{k,|z|_\infty\}}}\P(\vC(z)=A)&\le\sum_{z\in\Z^3}\sum_{m\ge\max\{k,|z|_\infty\}}2^{27m}\P(\#\vC(o)\ge m)\\
		&\le\sum_{z\in\Z^3}\sum_{m\ge\max\{k,|z|_\infty\}}\exp(m(27\log 2-64))\E\exp(64\#\vC(o)).
	\end{align*}
	Hence, Proposition~\ref{expMomBoundProp} implies that 
	$$\limsup_{k\to\infty}k^{-1}\log\P(E_k)\le\tfrac{1}{2}(27\log 2-64),$$
	as required.
\end{proof}

Next, we deduce Theorem~\ref{supCritProp} from Proposition~\ref{expMomBoundProp} and Corollary~\ref{peiCor}.

\begin{proof}[Proof of Theorem~\ref{supCritProp}]
    First, combining Corollary~\ref{peiCor} with the Borel-Cantelli lemma, we conclude that almost surely there are only finitely many bad $*$-connected components separating the origin from infinity. In particular, the external outer boundary of the last such component is part of an infinite $*$-connected component. Moreover, the probability that there exists a separating $*$-connected component at distance at least $k$ from the origin decays exponentially in $k$. In particular, this establishes the existence of an unbounded connected component and the claim on the diameter of the bounded connected components. To establish uniqueness of the unbounded connected component, we first note that if $\gamma$ is a nearest-neighbor path in $\Z^3$ consisting of $k$ steps, then Proposition~\ref{expMomBoundProp} implies that the probability for the external outer boundary of the union of bad $*$-connected components to intersect $\gamma$ is of diameter larger than $64k$ decays to 0 exponentially fast in $k$. In particular, this verifies both assertion (ii) and that $\vXir$ has a unique unbounded connected component. 
\end{proof}

The remainder of this section is devoted to the proof of Proposition~\ref{expMomBoundProp}. To establish the desired bound on the exponential moment, we combine the stabilization techniques from~\citep{penrose.2001} with the lattice construction used in~\citep{antal.1996}. We also refer the reader to~\citep{aldous.2009}, where a similar strategy has been applied successfully to a planar relative neighborhood graph. Despite the conceptual similarities, for the convenience of the reader we present a detailed proof. As a first step, we remove the long-range dependencies inherent to the definition of bad sites by introducing a robust variant of the notion of a bad site. This variant has the advantage of exhibiting a finite range of dependence.

First, loosely speaking, every point in $\vXir\cap\vQ_{\vL}(\vL z)$ should be connected by a path in $\vXir\cap\vQ_{2\vL}(\vL z)$ to an edge of the graph $\Rng(\vX\vda{1})\cap\vQ_{2\vL}(\vL z)$. Second, the relative neighborhood graph $\Rng(\vX\vda{1})\cap\vQ_{2\vL}(\vL z)$ should intersect every face of the cube $\vQ_{\vL}(\vL z)$ and be contained in a connected component of $\vXir\cap\vQ_{3\vL}(\vL z)$. Third, the configuration $\vXir\cap\vQ_{3\vL}(\vL z)$ should be stable in the sense that it does not change if the Poisson point processes are altered away from the larger cube $\vQ_{7\vL}(\vL z)$. This is achieved by making sure that close to any point of $\vQ_{7\vL}(\vL z)$ there exist Poisson points from both $\vX\vda{1}$ and $\vX\vda{2}$. More precisely, we say that a site $z \in \Z^3$ is \emph{robustly $(\vL,r)$-good} if
\begin{enumerate}
	\item[{\bf (C')}] $\Rng(\vX\vda{1})\cap\vQ_{2\vL}(\vL z)$ and $\vXir\cap\vQ_{\vL}(\vL z)$ are contained in a common connected component of $\vXir\cap\vQ_{3\vL}(\vL z)$ (connectivity condition),
	\item[{\bf (O')}] $\Rng(\vX\vda{1})\cap\vQ_{2\vL}(\vL z)$ intersects every face of the cube $\vQ_{\vL}(\vL z)$ (omnidirectionality condition), and
	\item[{\bf (S')}] $\vX\vda{i}\cap\vQ_{\vL/4}(\vL z'/4)\ne\es$ for every $i\in\{1,2\}$ and $z'\in \Z^3\cap\vQ_{28}(z)$ (stability condition).
\end{enumerate}
Since every robustly $(\vL,r)$-good site is also $(\vL,r)$-good, it remains to establish the exponential moment bound in Proposition~\ref{expMomBoundProp} for the $*$-connected component of not robustly $(\vL,r)$-good sites. Harnessing the dependent-percolation techniques from~\citep{liggett.1997}, the latter assertion hinges on the property that robustly $(\vL,r)$-good sites (i) become likely for large $L$ and small $r$, and (ii) exhibit a finite range of dependence.

\begin{lemma}
	\label{finDepLem}
	Let $L\ge1$, $r>0$ and assume that the stability condition \emph{{\bf (S')}} is satisfied at the origin. Then, the random set $\vXir\cap\vQ_{3\vL}$ is not changed by modifications of the point process $\vX\vda{1}\cup\vX\vda{2}$ outside $\vQ_{7\vL}$.
\end{lemma}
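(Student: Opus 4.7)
The plan is to verify that the membership of any point $y\in\vQ_{3\vL}\oplus B(o,r)$ in $\vXi_1$ is determined by the restriction of the point process $\vX\vda{1}\cup\vX\vda{2}$ to $\vQ_{7\vL}$. Since $\vXir\cap\vQ_{3\vL}$ is computed from $\vXi_1\cap(\vQ_{3\vL}\oplus B(o,r))$ via the erosion operation, this suffices. Recall that $\vXi_1=\{y\in\R^3:\,\dist(y,\Rng(\vX\vda{1}))\le\dist(y,\Rng(\vX\vda{2}))\}$, so the task reduces to expressing both distances $\dist(y,\Rng(\vX\vda{i}))$, $i\in\{1,2\}$, as functions of the Poisson points inside $\vQ_{7\vL}$ only.

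First I would exploit condition \textbf{(S')} to bound $\dist(y,\Rng(\vX\vda{i}))$ from above. The small cubes $\vQ_{\vL/4}(\vL z'/4)$ with $z'\in\Z^3\cap\vQ_{28}$ tile a slight enlargement of $\vQ_{7\vL}$ and each of them contains at least one point of both Poisson processes by \textbf{(S')}. Consequently, for every $y\in\vQ_{3\vL}\oplus B(o,r)$ (which lies in this tiled region once $r$ is small compared to $\vL$) and every $i\in\{1,2\}$,
\[\dist(y,\Rng(\vX\vda{i}))\le\dist(y,\vX\vda{i})\le\tfrac{\sqrt{3}}{4}\vL.\]

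The heart of the argument is the second step, where I would bound the lengths of the edges of $\Rng(\vX\vda{i})$ that may realize this distance. If $\{u,v\}\in\Rng(\vX\vda{i})$ has a point on the segment $[u,v]$ at distance at most $\tfrac{\sqrt{3}}{4}\vL$ from $y$, then its midpoint $m$ lies in a controlled neighborhood of $y$, and the defining empty-lens of the RNG, $B(u,|u-v|)\cap B(v,|u-v|)$, contains the full ball $B(m,|u-v|/2)$. Taking a constant $c$ sufficiently large, I would then argue that for $|u-v|\ge c\vL$ this ball contains at least one of the tiling cubes $\vQ_{\vL/4}(\vL z'/4)$ with $z'\in\Z^3\cap\vQ_{28}$, and the point of $\vX\vda{i}$ guaranteed inside that cube by \textbf{(S')} would violate the empty-lens property. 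Hence $|u-v|\le c\vL$ for every such relevant edge.

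The remaining step is essentially bookkeeping. The edge-length bound implies that every edge of $\Rng(\vX\vda{i})$ whose closest point to $y$ lies within $\tfrac{\sqrt{3}}{4}\vL$ has both endpoints at distance at most $\tfrac{\sqrt{3}}{4}\vL+c\vL$ from $y$, hence inside $\vQ_{7\vL}$; moreover, for any candidate pair $u,v\in\vX\vda{i}\cap\vQ_{7\vL}$ with $|u-v|\le c\vL$, the lens used to check the RNG-condition is contained in a further bounded neighborhood of $y$, again inside $\vQ_{7\vL}$. Assembling these observations, the two distances $\dist(y,\Rng(\vX\vda{i}))$ are measurable with respect to $(\vX\vda{1}\cup\vX\vda{2})\cap\vQ_{7\vL}$, which yields the lemma. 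The main obstacle is the clean choice of the constant $c$ in the second step together with the verification that the margin built into \textbf{(S')} through the constant $28$ is wide enough so that the offending tiling cube sits simultaneously inside the lens of a hypothetical long edge and inside $\vQ_{7\vL}$; this is precisely why the somewhat generous value $28$ was chosen in the definition of a robustly $(\vL,r)$-good site.
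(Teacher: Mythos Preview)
Your proposal is correct and follows essentially the same approach as the paper: use \textbf{(S')} to bound the distance from any point near $\vQ_{3\vL}$ to $\Rng(\vX\vda{i})$, and use \textbf{(S')} again to cap the length of the relevant RNG edges, so that everything needed is determined by $\vQ_{7\vL}$. The paper packages the last two steps more compactly by first proving that $\Rng(\vX\vda{i})\cap\vQ_{5\vL}$ itself is unchanged by modifications outside $\vQ_{7\vL}$ (a long edge would force an empty ball of diameter $L$ near $\vQ_{5\vL}$, contradicting \textbf{(S')}) and then invoking the distance bound; your edge-by-edge bookkeeping with an unspecified constant $c$ amounts to the same thing.
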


\begin{lemma}
	\label{highProbLem}
	It holds that 
	$\lim_{\vL\to\infty}\lim_{r\to0}\P(o\text{ is $(\vL,r)$-good})=1.$
\end{lemma}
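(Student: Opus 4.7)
The plan is to handle each of the three conditions \textbf{(C')}, \textbf{(O')}, \textbf{(S')} separately, showing that each holds with probability arbitrarily close to $1$ provided one first takes $r\to0$ and then $L\to\infty$. Throughout, I would exploit monotonicity of $\vXi_1^{\ominus r}$ in $r$, which makes the inner limit well-behaved.

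The stability condition \textbf{(S')} is by far the easiest: the event depends only on whether the finitely many cubes $\vQ_{\vL/4}(\vL z'/4)$ with $z'\in\Z^3\cap\vQ_{28}$ are hit by both Poisson processes. The number of such cubes is a universal constant, and each is missed by $\vX\vda{i}$ with probability $\exp(-\vla_i(\vL/4)^3)$, so a union bound together with independence of $\vX\vda{1}$ and $\vX\vda{2}$ yields that $\P(\textbf{(S')})\to 1$ as $\vL\to\infty$, uniformly in $r$. The omnidirectionality condition \textbf{(O')} does not involve $r$ at all and depends only on $\vX\vda{1}$. For large $\vL$, there are with high probability many Poisson points in the $1$-neighborhood of each face of $\vQ_{\vL}$, and the almost-sure connectedness of the Poisson RNG in $\R^d$ combined with Lemma~\ref{edgeLengthLem} ensures that edges crossing each face exist; hence $\P(\textbf{(O')})\to 1$ as $\vL\to\infty$, again uniformly in $r$.

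The main content of the lemma lies in the connectivity condition \textbf{(C')}. The idea is to use that $\vXi_1^{\ominus r}$ is monotonically increasing as $r\searrow 0$ and converges to $\vXi_1$. The crucial geometric input is that, almost surely, the skeletons $\Rng(\vX\vda{1})\cap\vQ_{3\vL}$ and $\Rng(\vX\vda{2})\cap\vQ_{3\vL}$ are disjoint closed sets in a compact region, hence their distance $\rho$ is strictly positive. For any $r<\rho/4$, a short computation with the defining inequality $\dist(\cdot,\Rng(\vX\vda{1}))\le\dist(\cdot,\Rng(\vX\vda{2}))$ shows that the tubular neighborhood $\Rng(\vX\vda{1})\cap\vQ_{3\vL}\oplus B(o,\rho/4)$ lies inside $\vXi_1^{\ominus r}$. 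So the entire skeleton inside $\vQ_{3\vL}$ becomes part of a single connected subset of $\vXi_1^{\ominus r}\cap\vQ_{3\vL}$, provided the RNG itself connects all its vertices within $\vQ_{3\vL}$; the latter is exactly the short-path property used in the proof of part~(i) of Theorem~\ref{thm:edge}, obtained via the descending-chain argument of~\citep{aldous.2009,daley.2005} and Lemma~\ref{edgeLengthLem}.

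It remains to connect each $x\in\vXi_1^{\ominus r}\cap\vQ_{\vL}$ to this skeleton component inside $\vQ_{3\vL}$. For fixed $\vL$ and almost every realization, $\vXi_1\cap\vQ_{\vL}$ consists of finitely many Voronoi cells of edges in $\Rng(\vX\vda{1})$, all of whose centre-edges lie inside $\vQ_{3\vL}$ with high probability as $\vL\to\infty$ (by the same local density argument as in Lemma~\ref{distToEdgeLem}). Within each cell, a point can be joined to its defining edge by the straight segment to the nearest point on $\Rng(\vX\vda{1})$; the separation $\rho$ between skeletons ensures that, for $r$ sufficiently small depending on the realization, this segment remains in $\vXi_1^{\ominus r}$. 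The main technical obstacle I anticipate is making this last step quantitatively uniform: points close to a Voronoi boundary in $\vXi_1$ may lose their connecting segment under erosion. This is handled by observing that the measurable event $\{x\in\vXi_1^{\ominus r}\cap\vQ_{\vL}\text{ is disconnected from }\Rng(\vX\vda{1})\cap\vQ_{3\vL}\text{ in }\vXi_1^{\ominus r}\cap\vQ_{3\vL}\}$ is decreasing in $r$ and has empty intersection over $r>0$ on a full-measure set, so monotone convergence yields that its probability tends to $0$ as $r\to 0$ for each fixed $\vL$. Combining all three conditions via a union bound and sending first $r\to 0$ and then $\vL\to\infty$ completes the argument.
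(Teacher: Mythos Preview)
Your treatment of \textbf{(S')} and \textbf{(O')} is essentially the paper's, and the reduction from $(L,r)$-good to robustly $(L,r)$-good is correctly identified. The genuine gap is in \textbf{(C')}. The monotone-convergence step does not close the loop. First, the bad event $A_r=\{\exists\, x\in\Xi_1^{\ominus r}\cap Q_L\text{ disconnected from }\Rng(X^{(1)})\cap Q_{3L}\text{ in }\Xi_1^{\ominus r}\cap Q_{3L}\}$ is \emph{not} monotone in $r$: as $r$ decreases, both the set of candidate points $\Xi_1^{\ominus r}\cap Q_L$ and the connecting medium $\Xi_1^{\ominus r}\cap Q_{3L}$ grow, so new points near $\partial\Xi_1$ may enter $Q_L$ while still disconnected; neither inclusion $A_{r_1}\subset A_{r_2}$ nor $A_{r_2}\subset A_{r_1}$ holds in general. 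Second, even granting monotonicity, the claim that $\bigcap_{r>0}A_r=\varnothing$ a.s.\ is exactly the assertion that \textbf{(C')} holds for all sufficiently small $r$, which is what you set out to prove. Your own diagnosis that points near a Voronoi boundary may lose the straight segment to $\pi_1(x)$ under erosion is correct, and invoking monotone convergence at that point is circular rather than a resolution.

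The paper addresses precisely this obstruction and does not rely on any soft limit argument. It shows in Appendix~\ref{app:condition_c} that the straight segment $[x,\pi_1(x)]$ need not lie in $\Xi_1^{\ominus r}$ because the eroded line-segment Voronoi cells are non-convex, and replaces it by a quantitative two-step connection. The ingredients are: (i) four cells of the line-segment Voronoi tessellation meet only in a locally finite set of \emph{quad points} (Lemma~\ref{intSectLem}, Corollary~\ref{intSectCor}), so for $r$ small every ball $B(x,2Kr)$ meets at most three cells; (ii) a sufficient condition~\eqref{standardConnEq} (Lemma~\ref{standardConnLem}) under which the straight segment does stay in $\Xi_1^{\ominus r}$; and (iii) for $x$ violating~\eqref{standardConnEq}, an explicit push $x\mapsto x^*=x+Krv$ in a direction chosen according to which phase contributes only one cell near $x$, together with an angle computation showing $[x,x^*]\subset\Xi_1^{\ominus r}$ and that $x^*$ now satisfies~\eqref{standardConnEq}. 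These geometric facts are the missing content of your argument.
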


Before we prove Lemmas~\ref{finDepLem} and~\ref{highProbLem}, we show how they can be used to establish Proposition~\ref{expMomBoundProp}.
\begin{proof}[Proof of Proposition~\ref{expMomBoundProp}]
	By Lemma~\ref{finDepLem} the process of robustly $(\vL,r)$-good sites is a stationary $7$-dependent site percolation process. Hence, Lemma~\ref{highProbLem} and~\citep[Theorem 0.0]{liggett.1997} imply that it stochastically dominates a supercritical Bernoulli site percolation process provided that $\vL$ and $r$ are chosen sufficiently large and small, respectively. As suitable choices of $\vL$ and $r$ allow to push the site activation probability of this Bernoulli percolation process arbitrarily close to $1$, the desired exponential moment bound becomes a consequence of a standard first-moment bound in percolation theory~\citep[Theorem 1.10]{grimmett.1999}.
\end{proof}
It remains to prove Lemmas~\ref{finDepLem} and~\ref{highProbLem}. To begin with, we show Lemma~\ref{finDepLem} which requires only elementary geometric arguments.
\begin{proof}[Proof of Lemma~\ref{finDepLem}]
	As a first step, we show that $\Rng(X^{(i)})\cap\vQ_{5\vL}$ is left unchanged by modifications of $\vX\vda{i}$ outside $\vQ_{7\vL}$. Otherwise, by the definition of the relative neighborhood graph, there would exist a ball of diameter $L$ intersecting $\vQ_{5\vL}$ that does not contain any points from $\vX\vda{i}$. This would yield a contradiction to the stability condition \emph{{\bf (S')}}. Moreover, by condition \emph{{\bf (S')}}, the distance of any point in $\vQ_{3\vL}$ to the nearest point on $\Rng(\vX\vda{i})$ is at most $L/2$. Therefore, the random set $\vXir\cap\vQ_{3\vL}$ is not changed by modifying $\vX\vda{i}$ outside $\vQ_{7\vL}$.
\end{proof}
Next, we show that conditions {\bf (O')} and {\bf (S')} are fulfilled whp. 
\begin{proof}[Proof of Lemma~\ref{highProbLem}, conditions {\bf (O')} and {\bf (S')}]
    First, since every robustly $(\vL,r)$-good site is also $(\vL,r)$-good, we obtain that
    $$\lim_{\vL\to\infty}\lim_{r\to0}\P(o\text{ is robustly $(\vL,r)$-good}) \le \lim_{\vL\to\infty}\lim_{r\to0}\P(o\text{ is $(\vL,r)$-good}).$$
    Hence, it remains to show that conditions {\bf (S')}, {\bf (O')} and {\bf (C')} are fulfilled with a probability tending to 1 as $L \to \infty$ and $r \to 0$. The exponential decay of the Poisson void probabilities shows that both, the omnidirectionality and the stability condition are satisfied whp as $\vL\to\infty$. As we explain below, proving the corresponding property for condition {\bf (C')} is more involved and is therefore postponed to Appendix~\ref{app:condition_c}. 
\end{proof}
It remains to check that condition {\bf (C')} is fulfilled with a probability tending to 1 as $L \to \infty$ and $r \to 0$, which is surprisingly complicated. First, a standard descending-chain argument~\citep[Proposition 9]{aldous.2009} shows that whp the graph $\Rng(\vX\vda{1})\cap\vQ_{2\vL}$ is contained in a connected component of $\Rng(\vX\vda{1})\cap\vQ_{3\vL}$. Moreover, by choosing $r>0$ sufficiently small, the probability that $\Rng(\vX\vda{1})\cap\vQ_{3\vL}\subset\vXir$ can also be pushed arbitrarily close to 1. It remains, to consider points of $\vXir\cap\vQ_{\vL}$ that do not lie on $\Rng(\vX\vda{1})$. 

A first attempt would be to connect any such point to the closest points on $\Rng(\vX\vda{1})$ by a line segment directly. However, due to the non-convex nature of the eroded cells in $\vXir$ this line segment need not be contained in $\vXir\cap\vQ_{\vL}$. Certainly, it is intuitively plausible that for small $r$ it should not be difficult to circumvent these obstacles. Nevertheless, making this line of argumentation rigorous requires a refined analysis of the deterministic cell geometry of $\vXir$. As this analysis is quite different from the probabilistic arguments appearing in the rest of the paper, we defer it to Appendix \ref{app:condition_c}.

\begin{proof}[Proof of part (ii) of Theorem~\ref{thm:edge}]
	Theorem~\ref{subCritProp} ensures the existence of $\vr_+$, while Theorem~\ref{supCritProp} ensures the existence of $\vr_-$ with the required properties. The proof of  $\P( o \in \vXi_1^{\ominus \vr_1} \setminus \vPsi_{r_2}(\vXi_1)) > 0$ for all $0 < \vr_1 < \vr_2,$ is postponed to Appendix~\ref{sec:app_constrictivity}. 
\end{proof}

\section{Numerical simulations}
\label{Numerics}
In applications, e.g. in computational materials science, the geometry of individual phases within the microstructures of materials under consideration is investigated based on 3D image data. That is, the information about the geometry is available on a discrete grid. Although the estimators introduced in Section~\ref{DefSec} require information in continuous space, they can be approximated from discrete data. The numerical simulation study, performed in this section, shows that also for the approximation from discrete data the values of the estimators remain unchanged when the sampling window is large enough. The results are obtained based on realizations of the multi-phase RNG model, which are discretized on $\mathbb{Z}^3$. Note that the above mentioned stabilization of estimators with respect to increasing sampling windows is also valid for the estimation of $\rmin{}{}$, although the required conditions for strong consistency in Corollary~\ref{constCor} are difficult to check and we verified them only partially for the multi-phase RNG model in Section~\ref{EdgeEffects}.   

To begin with, the random closed set $\vXi = \Vor(X^{(1)}, \lbrace X^{(1)}, X^{(2)} \rbrace)$ is simulated in the sampling window $[-750 , 750]^2 \times [-150, 550]$, where $X^{(1)}$ and $X^{(2)}$ are two independent homogeneous Poisson point processes in $\R^{3}$ with intensities $\vla_{1}, \vla_{2} > 0.$ In the following, we consider tortuosity and constrictivity of $\vXi$ with respect to transportation paths from $\vWpI$ to $l \vWpO{}$ with $l = 400,$ cf. Section \ref{DefTauSec}. We simulate one realization for each vector $\vla=(\vla_{1}, \vla_{2})$ of intensities contained in $\lbrace 3 \cdot 10^{-5}, 5 \cdot 10^{-5} \rbrace^{2}.$ In the first step, the Poisson point processes are simulated as described in \citep{moeller.2004}. In order to avoid edge effects the point processes are simulated in the enlarged sampling window $[-800 \times 800]^2 \times [-200, 600].$ In the second step, the relative neighborhood graphs are computed. In the third step, for each $z \in \mathbb{Z}^{3}$ we check if $z \in \vXi.$ For that, the edges of $\Rng(X^{(1)})$ and $\Rng(X^{(2)})$ are discretized and the distance of each point  $z \in \mathbb{Z}^{3}$ to both graphs is computed by the algorithm described in \citep{felzenszwalb.2012}.

\begin{figure}
		\includegraphics[width = 0.48\textwidth]{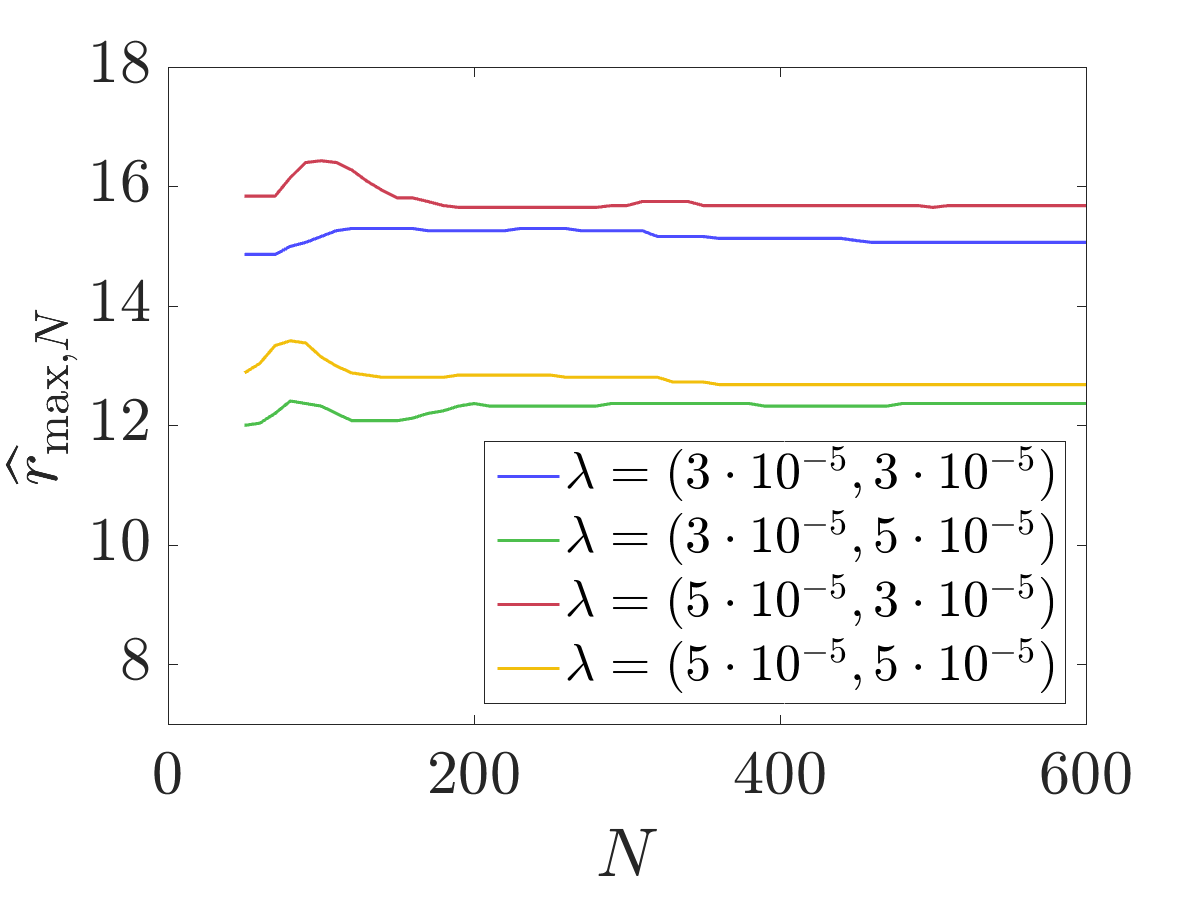}
		\includegraphics[width = 0.48\textwidth]{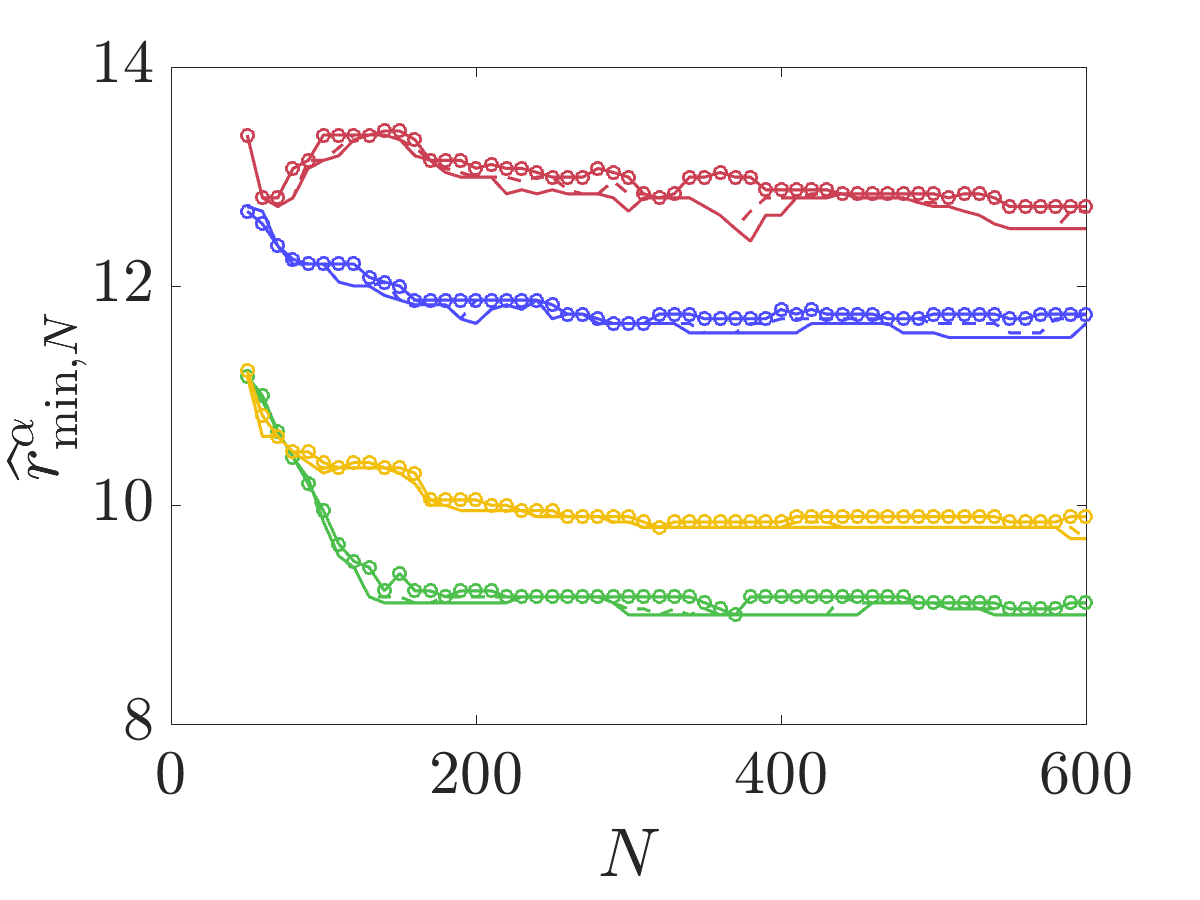}\\
        \includegraphics[width = 0.48\textwidth]{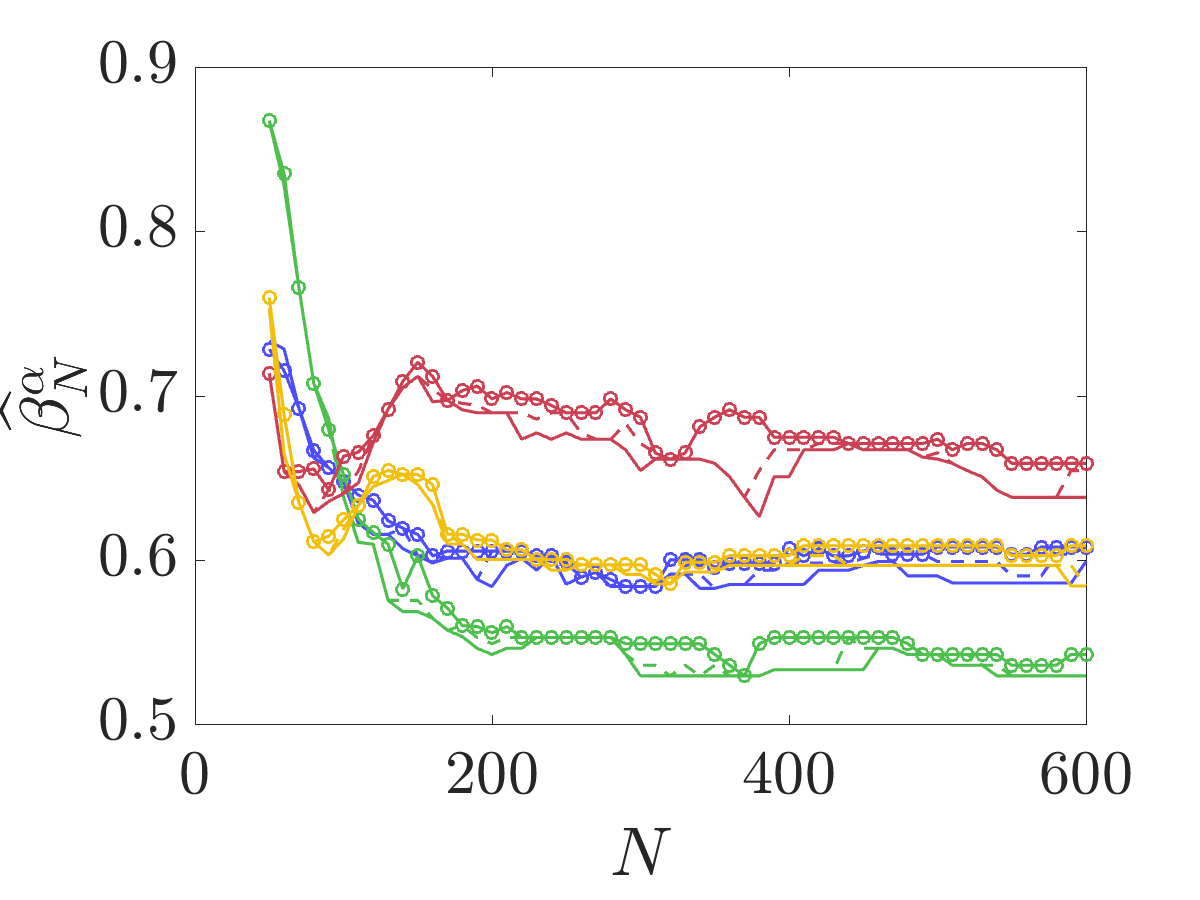}
		\includegraphics[width = 0.48\textwidth]{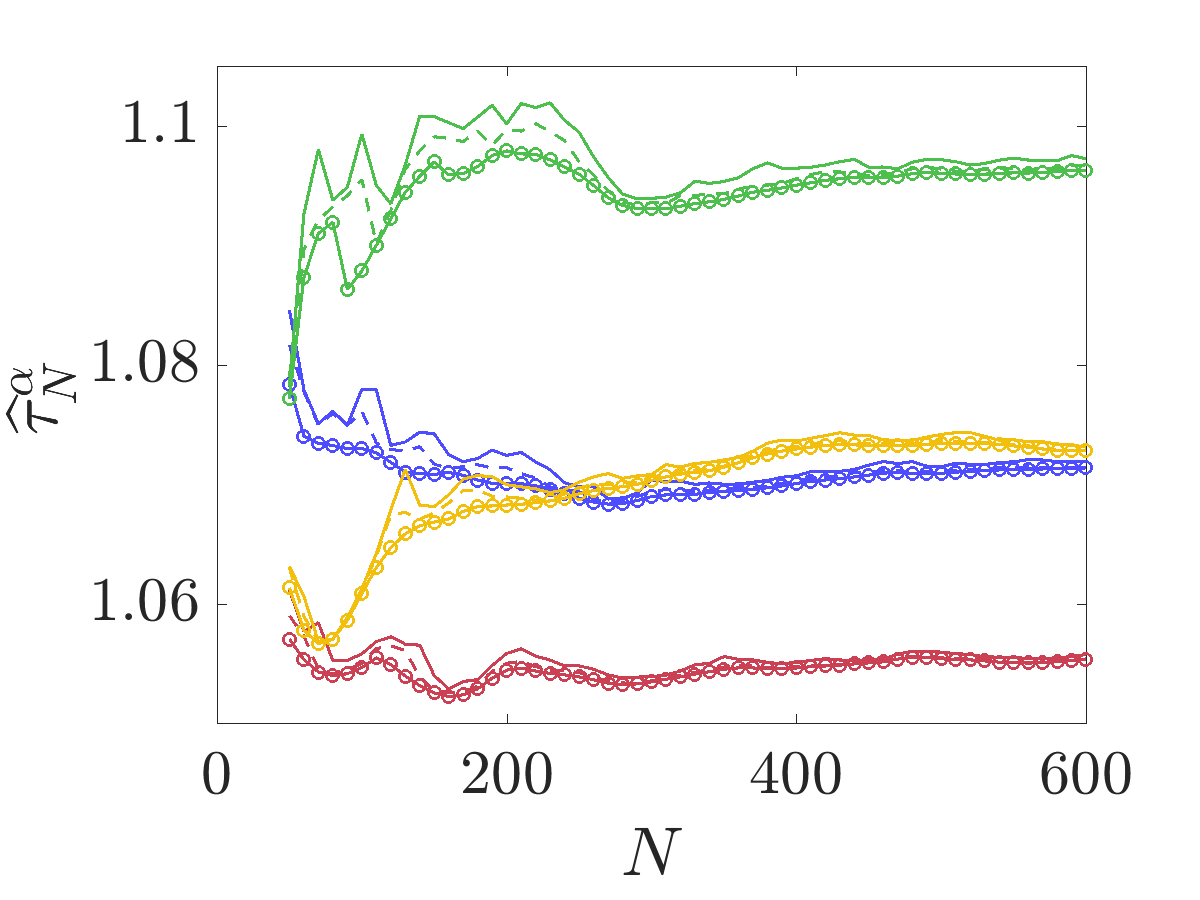}
		\caption{Estimation of mean geodesic tortuosity and constrictivity. In the plots regarding $\rminestedge{N}{}{}{}$, $\widehat{\beta}_{N}^{\alpha}$ and $\tauest{\alpha}{N}$, the straight line represents the estimator for $\alpha=1/4$, the dashed line for $\alpha=1/2$ and the line with circles for $\alpha=3/4$.}\label{fig:simulation_study}
\end{figure}

Based on the realizations of the multi-phase RNG model, we compute discrete approximations of the estimators $\tauest{\valpha}{10}, \tauest{\valpha}{20}, \ldots, \tauest{\valpha}{600}$ for each $\alpha \in \lbrace 1/4, 1/2, 3/4\rbrace.$ Before we give the definition
of  discrete approximation of $\tauest{\alpha}{N}$, denoted by $\discr(\tauest{\alpha}{N}),$ some additional notation is introduced. For each pair of points $x,y \in \mathbb{Z}^{3}$ we say that $x \sim y$ if $|x - y|_{\infty} \leq 1.$ Furthermore, we define the set 
\begin{align*}
\discr(\mathcal{C}_{\vXi \cap \Z^{3}}(\vWpO{})) = \lbrace x \in &\vXi \cap \Z^{3} \cap \vWpI: \text{for some } \\ & x = x_{1}, \ldots, x_n\in \vXi \cap \Z^{3},  \text{ with }  x_{1} \sim \cdots \sim x_n, x_n \in \vWpO{} \rbrace.
\end{align*}
This notation allows for the definition of
\begin{equation}
\discr(\tauest{\alpha}{N}) = \frac{1}{\# \discr(\mathcal{C}_{\vXi \cap \Z^{3}}(\vWpO{}))} \, \sum_{x \in \discr(\mathcal{C}_{\vXi \cap \Z^{3}}(\vWpO{}))}  \min_{y \in \vWpO{}} \, \min_{\substack{n \geq 2, \,\vx = \vx_{1} \sim \cdots \sim \vx_{\vn} = \vy  \\ \vx_{1}, \ldots, x_{n} \in \vXi \cap  \Z^{3} \cap \vWpp{\valpha}{\vN} }} \, \sum_{i=1}^{n-1} | x_{i} - x_{i+1} |.
\end{equation}
According to \citep{davis.2017} we may expect that the discrete estimator leads to a good approximation of $\tauest{\alpha}{N}$. 

In order to estimate constrictivity from discrete data, consider a discrete version $\discr(\vXi)=\vXi \cap \mathbb{Z}^{3}$ of $\vXi$.
Based on $\discr(\vXi)$, a discrete version $\discr(\vPsi_\vr(\vXi))$ of $\vPsi_\vr(\vXi)$ is computed, which allows for the estimation of $\rmax$. In order to estimate $\rmin{}{}$ the connected components of $\discr(\vPsi_\vr(\vXi))$ are computed with respect to the 26-neighborhood by the aid of the Hoshen-Kopelman algorithm~\citep{hoshen.1976}. 
    
The results of the simulation study are visualized in Figure~\ref{fig:simulation_study}. For the considered parameter constellations, the following can be observed. First, if the size of the sampling window is large enough, the estimators for $\tau, \rmax$ and $\rmin{}{}$ do not change significantly after a further enlargement of the sampling window. Note that this holds also for $\rmin{}{}$, for which the strong consistency of the estimator has not been theoretically established in Section~\ref{EdgeEffects}. Moreover, it can be said that the estimation of $\rmax$ becomes stable for smaller sampling windows compared to $\tau$ and $\rmin{}{}.$ Second, the estimators of both, $\rmin{}{}$ and $\tau$, which are determined based on the enlarged sampling window $\vWpp{\valpha}{\vN}$, do not vary for the considered choices of $\alpha$ provided that $\vN$ is large enough. This is in good accordance with the results from Corollary~\ref{cor:consEdge}.

\appendix
\section{Verification of Condition \eqref{eq:weaker_assumption_consistency_rmin} for Voronoi Mollifications}\label{sec:app_constrictivity}

We show that Condition \eqref{eq:weaker_assumption_consistency_rmin} is fulfilled if $\Xi$ is a Voronoi mollification in $\R^3$.

\begin{theorem}\label{thm:granulometry_decreasing}
	Let $k \geq 1$ and $\vX^{(1)}, \ldots, \vX^{(k)}$ be independent homogeneous Poisson point processes in $\R^3$ with intensities $\vla_{1}, \ldots, \vla_{k} > 0$. Let $\vXi = \Vor(\vX^{(1)}, \lbrace \vX^{(1)}, \ldots, \vX^{(k)} \rbrace)$ and $0< r_{1} < r_{2}$ be arbitrary. Then, 
	$\P( o \in \vXi^{\ominus \vr_1} \setminus \vPsi_{r_2}(\vXi)) > 0.$
\end{theorem}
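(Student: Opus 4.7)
The plan is to exhibit an explicit positive-probability configuration of the underlying Poisson point processes under which $\vXi$ looks locally around $o$ like a narrow tube of radius strictly between $\vr_1$ and $\vr_2$. Fix $\rho \in (\vr_1, \vr_2)$. If $\vXi \cap B(o, \vr_2 + 1)$ is, up to small perturbation, a solid tube of radius $\rho$ about a straight line through $o$, then the ball $B(o, \vr_1)$ lies inside this tube and so $o \in \vXi^{\ominus \vr_1}$, while any ball of radius $\vr_2 > \rho$ whose centre is within $\vr_2$ of $o$ must leak outside a tube of radius $\rho$, so $o \notin \vPsi_{\vr_2}(\vXi)$. The remaining task is to exhibit such a tube with positive probability.

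To realise this, I would work inside a large window $W = [-R,R]^3$ with $R \gg \vr_2$ and intersect three types of local constraints, each of positive probability and independent across the different $\vX\vda{j}$: (a) $\vX\vda{1} \cap W$ contains a finely spaced chain of points on the $\vx$-axis, so that the portion of $\Rng(\vX\vda{1})$ inside a neighbourhood of the origin follows the $\vx$-axis; (b) for each $j \ge 2$, the cylindrical shell $\{2\rho - \eta < \sqrt{\vy^2 + \vz^2} < 2\rho + \eta\} \cap W$ contains enough Poisson points of $\vX\vda{j}$ to cover it densely, while the inner cylinder $\{\sqrt{\vy^2+\vz^2} < 2\rho - \eta\} \cap W$ is void of $\vX\vda{j}$; (c) $\vX\vda{1} \cap W$ is void off a thin neighbourhood of the $\vx$-axis. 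Each constituent event is a finite conjunction of ``point in a small ball'' and ``no point in a bounded set'' events for independent Poisson processes, so it carries strictly positive probability and their intersection $E$ satisfies $\P(E) > 0$. On $E$, for $\vx = (0,\vy,\vz)$ near $o$ the distance to $\Rng(\vX\vda{1})$ is $\approx \sqrt{\vy^2+\vz^2}$ while its distance to $\bigcup_{j \ge 2} \Rng(\vX\vda{j})$ is $\approx 2\rho - \sqrt{\vy^2 + \vz^2}$, so $\vx \in \vXi$ iff $\sqrt{\vy^2 + \vz^2} \lesssim \rho$, yielding the desired tube (with an error controlled by $\eta$, which I would pick small compared with $\min(\rho - \vr_1,\vr_2 - \rho)$).

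The main obstacle is that $\vXi$ is not a function of the Poisson configurations inside $W$ alone: long-range RNG edges could bring $\Rng(\vX\vda{j})$ closer to $o$ than $2\rho$ (shrinking the tube below radius $\vr_1$) or attach off-axis pieces to $\Rng(\vX\vda{1})$ near $o$ (fattening $\vXi$ past $\vr_2$). This is where a shielding argument enters, using the descending-chain bounds already established in the proof of Lemma~\ref{edgeLengthLem} and in part (i) of Theorem~\ref{thm:edge}: by making the dense fills in (a) and (b) tight enough, the lens of every candidate long RNG edge crossing into $W$ is forced to contain a witness point, ruling such edges out. Consequently, on $E$ the graphs $\Rng(\vX\vda{i}) \cap B(o, \vr_2+1)$ are governed by $\vX\vda{i} \cap W$ alone, so the local tube picture above persists and $\{o \in \vXi^{\ominus \vr_1} \setminus \vPsi_{\vr_2}(\vXi)\}$ occurs on $E$, giving the claim.
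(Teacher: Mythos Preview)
Your tube picture is a genuinely different route from the paper, which instead places the $\vX\vda{1}$-points at the vertices of a small cube and the $\vX\vda{j}$-points ($j\ge2$) at the vertices of a concentric larger cube, and then proves by explicit computation (Lemmas~\ref{lem:edges_in_rng_on_cube}--\ref{lem:inclusion_of_ball}) that $B(o,\vr_1)\subset\vXi$ while no ball of radius $\vr_2$ through $o$ fits.  Your cylinder-of-radius-$\rho$ idea makes both conclusions geometrically transparent: $B(o,\vr_1)$ sits inside the tube, and a ball of radius $\vr_2>\rho$ cannot fit in a tube of radius $\rho$.  What the paper's construction buys is that a \emph{finite} deterministic point pattern suffices and the shielding is handled by a second, outer layer of prescribed points together with an explicit void ball; this makes locality of the RNG rigorous via the elementary Lemma~\ref{lem:edges_in_rng_from_outside}.

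There is, however, a concrete gap in your shielding sketch.  You claim that ``by making the dense fills in (a) and (b) tight enough, the lens of every candidate long RNG edge crossing into $W$ is forced to contain a witness point''.  For $\vX\vda{1}$ this fails: your only $\vX\vda{1}$-fill inside $W$ lies on the $x$-axis.  Take an (unconstrained) point $p=(0,R+1,0)\in\vX\vda{1}\setminus W$ and let $q\approx(0,0,0)$ be its nearest axis point.  The RNG lens $B(p,|p-q|)\cap B(q,|p-q|)$ meets the $x$-axis only at $q$ itself (for any other axis point $(t,0,0)$ one has $|p-(t,0,0)|^2=t^2+(R+1)^2>|p-q|^2$), so no amount of refining the axis fill blocks this edge.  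Such edges bring $\Rng(\vX\vda{1})$ right through $o$ along directions transverse to the $x$-axis, and if several of them occur the set $\vXi$ near $o$ can swell well beyond the $\rho$-tube, potentially admitting a ball of radius $\vr_2$.  The references to Lemma~\ref{edgeLengthLem} and Theorem~\ref{thm:edge}(i) do not help here: those are whp statements for the \emph{unconditioned} process, whereas on your event $E$ the interior of $W$ is far from typical.

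The fix is exactly what the paper does and what your sketch omits: enlarge the controlled region (impose a void for all $\vX\vda{j}$ in a larger ball) and add an explicit outer layer of prescribed points for each process so that any edge reaching in from the uncontrolled exterior must have one of these outer points in its lens; then invoke a deterministic blocking lemma in the spirit of Lemma~\ref{lem:edges_in_rng_from_outside}.  With that addition your tube argument goes through and is, arguably, cleaner than the cube computation; but as written the third paragraph does not secure locality for $\Rng(\vX\vda{1})$.
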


To prove Theorem \ref{thm:granulometry_decreasing}, some lemmas concerning the RNG in $\R^3$ are proven. In the following, we consider the finite point pattern 
$$\vphi= \lbrace \vx_{1}, \ldots, \vx_{8} \rbrace = \left \lbrace \left((-1)^{i}\va, (-1)^{j}\va, (-1)^{k}\va \right): i,j,k \in \lbrace 0, 1\rbrace \right\rbrace \subset \R^3,$$ 
where $\va = (r_1 + r_2)/4$. Additionally to $\vphi$, we consider a finite point pattern $\vphi(\varepsilon) = \lbrace \vy_{1}, \ldots, \vy_{8} \rbrace \subset \R^{3}, $ where $y_i \in B(x_i, \varepsilon)$ for each $1 \leq i\leq 8$.

\begin{lemma}\label{lem:edges_in_rng_on_cube}
	Let $1 \leq i < j \leq 8$ and let $\varepsilon < a (\sqrt{3} - \sqrt{2}) / 2$ . Then, $\vy_{i}$ and $\vy_{j}$ are connected by an edge in $\Rng(\vphi(\varepsilon))$ if and only if $| \vx_{i} - \vx_{j}| = 2a.$
\end{lemma}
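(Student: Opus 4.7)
The plan is to exploit the rigid combinatorics of a cube: the unperturbed pairs $\{\vx_i,\vx_j\}$ split into exactly three distance classes --- $2\va$ (the 12 cube edges), $2\va\sqrt{2}$ (the 12 face diagonals) and $2\va\sqrt{3}$ (the 4 space diagonals) --- and the hypothesis $\varepsilon<\va(\sqrt{3}-\sqrt{2})/2$ is precisely what is needed to prevent the perturbed distances $|\vy_i-\vy_j|\in[\,|\vx_i-\vx_j|-2\varepsilon,\,|\vx_i-\vx_j|+2\varepsilon\,]$ from overlapping across these three bands. I would first record this distance spectrum, observe that the three perturbed bands remain disjoint, and then check the RNG criterion separately in each of the three cases.

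For an edge pair $|\vx_i-\vx_j|=2\va$ I need to show $\vy_i\sim\vy_j$ in $\Rng(\vphi(\varepsilon))$. The combinatorial observation driving this step is that two cube-adjacent vertices share no common cube-neighbour, so for every remaining $\vx_k$ one has $\max(|\vx_i-\vx_k|,|\vx_j-\vx_k|)\ge 2\va\sqrt{2}$. Absorbing the $\varepsilon$-perturbation then gives $\max(|\vy_i-\vy_k|,|\vy_j-\vy_k|)\ge 2\va\sqrt{2}-2\varepsilon>2\va+2\varepsilon\ge|\vy_i-\vy_j|$, which is already implied by the hypothesis (indeed by the weaker $\varepsilon<\va(\sqrt{2}-1)/2$, since $\sqrt{3}-\sqrt{2}<\sqrt{2}-1$). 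For a face-diagonal pair I would exhibit a blocker directly: the face containing $\vx_i$ and $\vx_j$ has two other vertices, each cube-adjacent to both endpoints and hence at distance $2\va$ from each, so the perturbation estimate again certifies a non-edge with the same margin.

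The binding case, and the only place where genuine sharpness appears, is the space diagonal. If $|\vx_i-\vx_j|=2\va\sqrt{3}$, every other cube vertex $\vx_k$ has distances exactly $2\va$ to one endpoint and $2\va\sqrt{2}$ to the other, so the best available blocker yields $\max(|\vy_i-\vy_k|,|\vy_j-\vy_k|)\le 2\va\sqrt{2}+2\varepsilon$, to be compared with $|\vy_i-\vy_j|\ge 2\va\sqrt{3}-2\varepsilon$. The inequality $2\va\sqrt{2}+2\varepsilon\le 2\va\sqrt{3}-2\varepsilon$ rearranges exactly to $\varepsilon\le\va(\sqrt{3}-\sqrt{2})/2$, explaining both the form of the hypothesis and its sharpness. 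I expect this space-diagonal case to be the only delicate step; the other two cases reduce to routine verification once the distance spectrum of the cube has been tabulated.
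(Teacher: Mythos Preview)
Your proposal is correct and follows essentially the same three-case split as the paper's proof, with the same perturbation bounds $|\vy_i-\vy_j|\in[\,|\vx_i-\vx_j|-2\varepsilon,\,|\vx_i-\vx_j|+2\varepsilon\,]$ applied to each of the distance classes $2\va$, $2\va\sqrt{2}$, $2\va\sqrt{3}$. Your write-up is in fact a bit more transparent than the paper's: you make explicit the cube-combinatorial reason why $\max(|\vx_i-\vx_k|,|\vx_j-\vx_k|)\ge 2\va\sqrt{2}$ for edge pairs and correctly isolate the space-diagonal case as the one that forces the specific bound $\varepsilon<\va(\sqrt{3}-\sqrt{2})/2$.
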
 
\begin{proof}
	Note that $|\vx_{i} - \vx_{j}| \in \lbrace 2a, 2\sqrt{2}a, 2\sqrt{3}a \rbrace.$  Let $|\vx_{i} - \vx_{j}| = 2a.$ Then it is 
	$$ \max \lbrace |\vy_{i} - \vz|, |\vy_{j} - z| \rbrace \geq 2 \sqrt{2} a - 2 \varepsilon > 2\sqrt{2}a - a(\sqrt{2} - 1) = 2a + (\sqrt{2} - 1)a > 2a + 2\varepsilon > |\vy_{i} - \vy_{j}|,$$ for each $\vz \in \vphi(\varepsilon)\setminus \lbrace \vy_{i}, \vy_{j} \rbrace.$ Thus $\vy_{i}$ and $\vy_{j}$ are connected by an edge in $\Rng(\vphi(\varepsilon))$.\\
	Let $|\vx_{i} - \vx_{j}| = 2\sqrt{2}a.$ Then, there exists $\vz \in \varphi(\varepsilon) \setminus \lbrace \vy_{i}, \vy_{j} \rbrace$ such that 
	$$ \max \lbrace |\vy_{i} - \vz|, |\vy_{j} - z| \rbrace \leq 2a + 2\varepsilon < 2\sqrt{2}a - 2 \varepsilon = |\vy_{i} - \vy_{j}|.$$ Thus $\vy_{i}$ and $\vy_{j}$ are not connected by an edge in $\Rng(\vphi(\varepsilon))$.\\
	Let $|\vx_{i} - \vx_{j}| = \sqrt{3}a.$ Then, there exists $\vz \in \varphi(\varepsilon) \setminus \lbrace \vy_{i}, \vy_{j} \rbrace$ such that 
	$$ \max \lbrace |\vy_{i} - \vz|, |\vy_{j} - z| \rbrace \leq 2\sqrt{2}a + 2 \varepsilon < 2 \sqrt{3}a - a(\sqrt{3} - \sqrt{2}) = 2 \sqrt{3}a - 2\varepsilon < |\vy_{i} - \vy_{j}|.$$ Thus $\vy_{i}$ and $\vy_{j}$ are not connected by an edge in $\Rng(\vphi(\varepsilon))$.
\end{proof}

\begin{lemma}\label{lem:edges_in_rng_from_outside}
	Let $\varepsilon > 0$ and $\lbrace u_1, u_2, \ldots \rbrace \subset \R^{3} \setminus B(o, 3(\sqrt{3}a + \varepsilon))$ be a locally finite point pattern such that the line segment $[u_1, u_2]$ hits the convex hull $\conv (\vphi(\varepsilon))$ of $\vphi(\varepsilon).$ Then, 
	\begin{itemize}
		\item[(i)] $u_1$ and $u_2$ are not connected by an edge in $\Rng(\vphi(\varepsilon) \cup \lbrace u_1, u_2, \ldots \rbrace),$ and
		\item[(ii)] if $u_1$ is connected with $\vy_{i}$ by an edge in $\Rng(\vphi(\varepsilon) \cup \lbrace u_1, u_2, \ldots \rbrace)$ for some $1 \leq i \leq 8,$ it is $|y_{9} - y_{i}| = \min \lbrace |y_{9} - y_{j}| : 1 \leq j \leq 8 \rbrace$.     
	\end{itemize} 
\end{lemma}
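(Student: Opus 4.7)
The plan is to leverage the strong scale separation between the interior configuration $\vphi(\varepsilon) \subset B(o,\sqrt{3}a+\varepsilon)$ and the external points $\lbrace u_1,u_2,\ldots \rbrace \subset \R^{3} \setminus B(o,3(\sqrt{3}a+\varepsilon))$. Three consequences will drive both parts: for every $p \in \conv(\vphi(\varepsilon))$ one has $\min(|u_1 - p|,|u_2 - p|) > 2(\sqrt{3}a+\varepsilon)$; the diameter $|y_i - y_j| \leq 2(\sqrt{3}a+\varepsilon)$ for all $i,j$; and the ``inradius'' of $\conv(\vphi(\varepsilon))$ with respect to the vertex set is at most $\sqrt{3}a + 2\varepsilon$.

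For part (i), I would pick an arbitrary $p \in [u_1,u_2] \cap \conv(\vphi(\varepsilon))$ and produce a witness $y_k \in \vphi(\varepsilon)$ violating the RNG edge criterion for $\lbrace u_1,u_2 \rbrace$. The geometric ingredient is that every point of the standard cube $[-a,a]^{3}$ lies within distance $\sqrt{3}a$ of some corner, since for $p=(p_1,p_2,p_3)$ the vertex $(\mathrm{sign}(p_j)\,a)_{j=1}^{3}$ satisfies $\sum_j(a-|p_j|)^2 \leq 3a^2$. Absorbing the $\varepsilon$-perturbation via $\conv(\vphi(\varepsilon)) \subset [-a,a]^{3} \oplus B(o,\varepsilon)$ then yields some $k$ with $|p - y_k| \leq \sqrt{3}a + 2\varepsilon$. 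Since this bound is strictly smaller than both $|u_1-p|$ and $|u_2-p|$, combining the triangle inequality with the collinearity identity $|u_1 - p| + |u_2 - p| = |u_1 - u_2|$ gives $\max(|u_1 - y_k|,|u_2 - y_k|) \leq |u_1 - u_2|$, contradicting the assumed edge by the RNG definition recalled in Section~\ref{defRngSec}.

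For part (ii), I interpret $y_9$ as the point $u_1$ relabeled upon its adjunction to the configuration when analyzing incident edges, so that ``distances from $y_9$ to the cube vertices'' mean $|u_1 - y_j|$; the claim then asserts that the interior neighbor of this ninth vertex is the closest among $y_1,\ldots,y_8$. Assuming $u_1 \sim y_i$, the RNG criterion forces $\max(|u_1 - y_j|,|y_i - y_j|) > |u_1 - y_i|$ for every $j \neq i$. The scale separation now yields $|y_i - y_j| \leq 2(\sqrt{3}a+\varepsilon) < |u_1 - y_i|$, so the maximum is necessarily realized by the first term, giving $|u_1 - y_j| > |u_1 - y_i|$ for all $j \neq i$. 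This is precisely the stated minimality $|y_9 - y_i| = \min_{1\leq j\leq 8}|y_9 - y_j|$.

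The main obstacle will be the inradius estimate in part (i), i.e.\ verifying $\max_{p \in \conv(\vphi(\varepsilon))}\min_k |p - y_k| \leq \sqrt{3}a + 2\varepsilon$. For the unperturbed cube this is immediate from $\sum_j(a-|p_j|)^2 \leq 3a^2$, and the $\varepsilon$-perturbation is absorbed by a routine Hausdorff-distance comparison between $\conv(\vphi(\varepsilon))$ and $[-a,a]^{3}$. Once this bound is established, both parts reduce to short triangle-inequality manipulations driven by the scale separation, and no probabilistic input is required since the lemma is purely geometric.
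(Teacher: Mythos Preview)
Your proposal is correct and follows essentially the same route as the paper. Both parts hinge on the same scale-separation observation and the same witness-based contradiction of the RNG edge criterion; your treatment of (ii) is in fact identical to the paper's (contrapositive versus direct, but the same two-line computation).

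The one cosmetic difference is in part (i): you work to identify the \emph{nearest} corner $y_k$ to the intersection point $p$ and bound $|p-y_k|\le\sqrt{3}a+2\varepsilon$, whereas the paper simply takes $y_1$ as witness and uses the cruder bound $|y_1-z_0|\le|y_1|+|z_0|\le 2(\sqrt{3}a+\varepsilon)$ via the triangle inequality through the origin. Since the hypothesis already places the $u_i$ outside $B(o,3(\sqrt{3}a+\varepsilon))$, the paper's coarser estimate suffices and avoids the nearest-corner computation entirely; your sharper bound would matter only if one wished to tighten the separation constant in the lemma's hypothesis. Your interpretation of $y_9$ as $u_1$ is also the one the paper's proof silently adopts.
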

\begin{proof}
	Note that $\conv(\vphi(\varepsilon)) \subset B(o, \sqrt{3}a + \varepsilon).$ By assumption, there is a point $z_{0} \in [u_1, u_2] \cap \conv(\varphi(\varepsilon))$. Then,  $m = \min \lbrace |z_{0} - u_1|, |z_{0} - u_2| \rbrace$ is bounded from below and from above by
	
	$$ 2\sqrt{3}a + 2\varepsilon = 3(\sqrt{3}a+\varepsilon) - (\sqrt{3}a + \varepsilon) \leq m \leq |u_1 - u_2|/2.$$
		Since $|\vy_{1} - z_{0}| \leq |\vy_{1}| + |z_{0}| \leq 2 \sqrt{3}a + 2\varepsilon \leq m$, we see that 
		$$\max \lbrace |\vy_1 - u_1|, |\vy_1 - u_2|\rbrace \leq |\vy_1 - z_{0}| + \max \lbrace |z_0 - u_1|, |z_0 - u_2|\rbrace \leq |z_0 - u_1| + |z_0 - u_2| = |u_1 - u_2|.$$
		In particular,  $u_1$ and $u_2$ are not connected by an edge in $\Rng(\vphi(\varepsilon) \cup \lbrace u_1, u_2, \ldots \rbrace)$, thereby proving (i). To prove (ii), assume that there exists $j \neq i$ such that $|u_1 - \vy_{j}| < |u_1 - \vy_{i}|$ and $u_1$ is connected with $\vy_{i}$ by an edge in $\Rng(\vphi(\varepsilon) \cup \lbrace u_1, u_2 \rbrace).$ Then,
	$\max \lbrace |u_1 - \vy_{j}|, |\vy_{i} - \vy_{j}| \rbrace = |u_1 - \vy_{j}| < |u_1 - \vy_{i}|,$ which leads to a contradiction.
\end{proof}

After rescaling the Poisson point processes, we may assume that $\va=1$ in the following. Then, the points in $\vphi$ are the vertices of a cube with side length $2$.  Moreover, we also assume that
\begin{equation}\label{eq:condition_eps}
\varepsilon = (2 - \vr_1)/3 < (\sqrt{3} - \sqrt{2})/2,
\end{equation}
which is possible after increasing $\vr_1$ and thereby decreasing $\vr_2 = 4 - \vr_1.$
\begin{figure}[!htpb]
    \centering
    \begin{tikzpicture}[scale=1.5]
        \draw (-1,-1)--(1,-1)--(1,1)--(-1,1)--cycle;

        \draw (-1.83,-1.83)--(1.83,-1.83)--(1.83,1.83)--(-1.83,1.83)--cycle;
        
        \coordinate[label=-90: {$x_1$}] (v) at (-1,-1);
        \coordinate[label=-90: {$x_2$}] (v) at (1,-1);
        \coordinate[label=90: {$x_3$}] (v) at (1,1);
        \coordinate[label=90: {$x_4$}] (v) at (-1,1);

        \coordinate[label=-90: {$\widetilde{x}_1$}] (v) at (-1.83,-1.83);
        \coordinate[label=-90: {$\widetilde{x}_2$}] (v) at (1.83,-1.83);
        \coordinate[label=90: {$\widetilde{x}_3$}] (v) at (1.83,1.83);
        \coordinate[label=90: {$\widetilde{x}_4$}] (v) at (-1.83,1.83);

        \draw[dashed] (0,0) circle (2cm);

        \draw[thick, decorate,decoration=brace] (1,-1)--(0, -1);
        \coordinate[label=-90: \small{1}] (v) at (0.5, -1);

        \draw[thick, decorate,decoration={brace, aspect=0.60}] (1.83, 1.83)--(0, 1.83);
        \coordinate[label=-90: \small{$2\sqrt{2} - 1$}] (v) at (0.71, 1.83);

        \draw[thick, decorate,decoration={brace, aspect=0.40}] (2,-0)--(0, -0);
        \coordinate[label=-90: \small{2}] (v) at (1.2, -0);
    \end{tikzpicture}
    \caption{Projection of $\varphi$ and $\lbrace \widetilde{x}_1, \ldots, \widetilde{x}_8 \rbrace$ onto the plane orthogonal to $e_3$}
    \label{shieldFig}
\end{figure}
In the following, we consider the point pattern $\phi=\vphi \cup 6(\sqrt{3}c + \varepsilon) \vphi$, i.e. $\phi=\lbrace x_1, \ldots, x_{16} \rbrace,$ where $x_{i+8} = 6(\sqrt{3}c + \varepsilon) x_i$ for each $1 \leq i \leq 8.$ Analogously, we define $\phi(\varepsilon)=\lbrace y_1, \ldots, y_{16}\rbrace$, where $y_i \in B(x_i, \varepsilon)$ for each $1 \leq i \leq 16.$ Moreover, let $\widetilde{\phi} = \lbrace \widetilde{\vx}_{1}, \ldots, \widetilde{\vx}_{16} \rbrace$ and $\widetilde{\phi}(\varepsilon) = \lbrace \widetilde{\vy}_{1}, \ldots, \widetilde{\vy}_{16} \rbrace,$ where $\widetilde{\vx}_{i} = c \vx_{i}$ with $c = 2\sqrt{2} - 1$ and $\widetilde{\vy}_{i} \in B(\widetilde{\vx}_{i}, \varepsilon)$. Figure~\ref{shieldFig} shows the geometric relation between $\phi$ and $\widetilde{\phi}$. Note that  Lemmas~\ref{lem:edges_in_rng_on_cube} and~\ref{lem:edges_in_rng_from_outside} ensure that $\Rng(\phi)$ and $\Rng(\widetilde{\phi})$ exhibit the same topology. Let $$\eta = \lbrace z \in \R^{3}: \dist(z, \Rng(\phi)) \leq \dist(z, \Rng(\widetilde{\phi})) \rbrace$$ and 
$$\eta_{ \varepsilon} = \lbrace z \in \R^{3}: \dist(z, \Rng(\phi(\varepsilon))) \leq \dist(z, \Rng(\widetilde{\phi}(\varepsilon))) \rbrace.$$
Before we formulate Lemmas~\ref{lem:notin_ball} and~\ref{lem:inclusion_of_ball}, which are the basis for showing that $o \not \in \vPsi_{\vr_2}(\vXi)$ and $o \in \vXi^{\ominus \vr_1}$, respectively, we establish a result relating $\vPsi_r(\eta)$ with $\vPsi_{r+ 2\varepsilon}(\eta_\varepsilon)$. 

\begin{lemma}\label{lem:HausdorffDistance}
	Let $r, \varepsilon > 0$. Then,     $\vPsi_{r + 2 \varepsilon}(\eta_{ \varepsilon}) \subset \vPsi_{r}(\eta) \oplus B(o,2 \varepsilon).$
\end{lemma}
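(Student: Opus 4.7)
The plan is to reduce the claim to a set inclusion for the erosion of $\eta_\varepsilon$ via a morphological identity, and then to establish that inclusion using uniform closeness of the distance functions defining $\eta$ and $\eta_\varepsilon$.

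First, I observe the identity
$$\vPsi_{r+2\varepsilon}(\eta_\varepsilon) = \vPsi_r\bigl(\eta_\varepsilon^{\ominus 2\varepsilon}\bigr) \oplus B(o, 2\varepsilon),$$
which follows from $A \ominus B(o, r+2\varepsilon) = (A \ominus B(o, 2\varepsilon)) \ominus B(o, r)$ applied to $A = \eta_\varepsilon$. Consequently, the lemma reduces to proving the set inclusion $\eta_\varepsilon^{\ominus 2\varepsilon} \subset \eta$, since monotonicity of the opening then yields $\vPsi_r(\eta_\varepsilon^{\ominus 2\varepsilon}) \subset \vPsi_r(\eta)$ and Minkowski-summing with $B(o, 2\varepsilon)$ completes the argument.

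Next, I control the distance functions uniformly. By Lemma~\ref{lem:edges_in_rng_on_cube}, the graphs $\Rng(\vphi)$ and $\Rng(\vphi(\varepsilon))$ have identical combinatorial structure, with each pair of corresponding edge endpoints within $\varepsilon$; a linear parameterization of the edges yields $d_H(\Rng(\vphi), \Rng(\vphi(\varepsilon))) \le \varepsilon$, and analogously for $\widetilde\vphi$. Writing $d_1 = \dist(\cdot, \Rng(\vphi))$, $d_2 = \dist(\cdot, \Rng(\widetilde\vphi))$ and $d_1^\varepsilon, d_2^\varepsilon$ for their perturbed counterparts, this gives $|d_i - d_i^\varepsilon| \le \varepsilon$ for $i = 1,2$ uniformly on $\R^3$.

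To establish $\eta_\varepsilon^{\ominus 2\varepsilon} \subset \eta$, take $y$ with $B(y, 2\varepsilon) \subset \eta_\varepsilon$. The ball condition forces $\dist(y, \Rng(\widetilde\vphi(\varepsilon))) \ge 2\varepsilon$, so a nearest point $q^\varepsilon \in \Rng(\widetilde\vphi(\varepsilon))$ satisfies $|y - q^\varepsilon| = d_2^\varepsilon(y) \ge 2\varepsilon$, and the auxiliary point $y' = y + 2\varepsilon (q^\varepsilon - y)/|q^\varepsilon - y|$ lies in $B(y, 2\varepsilon)$ with $d_2^\varepsilon(y') = d_2^\varepsilon(y) - 2\varepsilon$. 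Applying the ball condition $d_1^\varepsilon(y') \le d_2^\varepsilon(y')$ and the geometric fact that moving from $y$ toward $q^\varepsilon$ moves away from the nearest point $p^\varepsilon \in \Rng(\vphi(\varepsilon))$ (so that $d_1^\varepsilon(y') \ge d_1^\varepsilon(y) + 2\varepsilon$ in this configuration) yields $d_1^\varepsilon(y) + 2\varepsilon \le d_2^\varepsilon(y)$. Combining with the uniform bounds from the previous step gives $d_1(y) \le d_1^\varepsilon(y) + \varepsilon \le d_2^\varepsilon(y) - \varepsilon \le d_2(y)$, so $y \in \eta$.

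The main obstacle is the final geometric step: justifying that moving from $y$ by $2\varepsilon$ toward $q^\varepsilon$ strictly increases $d_1^\varepsilon$ rather than merely respecting its $1$-Lipschitz bound. This is where the specific cube structure enters, since $\Rng(\vphi)$ is strictly surrounded by $\Rng(\widetilde\vphi) = (2\sqrt{2}-1)\Rng(\vphi)$, so that the nearest edges on the inner and outer skeleton lie roughly on a common ray through any $y$ in the relevant region; rigorous verification requires a case analysis over the edge pairs that can realize the minima $d_1^\varepsilon(y)$ and $d_2^\varepsilon(y)$.
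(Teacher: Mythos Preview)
Your reduction to the inclusion $\eta_\varepsilon^{\ominus 2\varepsilon}\subset\eta$ and the concluding chain
$d_1(y)\le d_1^\varepsilon(y)+\varepsilon\le d_2^\varepsilon(y)-\varepsilon\le d_2(y)$
are exactly the paper's argument; the paper's proof is the same three lines. You have also correctly isolated the only non-routine step: obtaining the margin inequality $d_1^\varepsilon(y)+2\varepsilon\le d_2^\varepsilon(y)$ from the ball condition $B(y,2\varepsilon)\subset\eta_\varepsilon$. This implication is \emph{not} a general fact about sets of the form $\{d_1\le d_2\}$: with two single-point ``graphs'' at $(0,0,0)$ and $(1,0,0)$ and $y$ far out on the negative $x_1$-axis, the boundary of $\eta_\varepsilon$ is the hyperplane $\{x_1=1/2\}$, so $B(y,2\varepsilon)\subset\eta_\varepsilon$ for arbitrarily large $\varepsilon$, yet $d_2^\varepsilon(y)-d_1^\varepsilon(y)=1$. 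The paper simply writes ``In particular, $\dist(z,\Rng(\phi(\varepsilon)))+\varepsilon\le\dist(z,\Rng(\widetilde\phi(\varepsilon)))-\varepsilon$'' and does not supply the geometric justification either; so the gap you flag is precisely the step the paper treats as immediate, and your write-up is in fact more candid than the paper's own proof.

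Two small remarks. First, the sets $\eta,\eta_\varepsilon$ are built from the $16$-point patterns $\phi,\widetilde\phi$ rather than the $8$-point patterns $\varphi,\widetilde\varphi$ you cite; this is only notational. Second, your stated requirement $d_1^\varepsilon(y')\ge d_1^\varepsilon(y)+2\varepsilon$ is stronger than necessary: since $d_1^\varepsilon(y')\le d_2^\varepsilon(y')=d_2^\varepsilon(y)-2\varepsilon$, the weaker monotonicity $d_1^\varepsilon(y')\ge d_1^\varepsilon(y)$ already yields the needed margin.
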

\begin{proof}
	Since the asserted inclusion is equivalent to $\eta_{ \varepsilon}^{\ominus r + 2\varepsilon} \oplus B(o,r + 2 \varepsilon)\subset \eta^{\ominus r } \oplus B(o,r + 2 \varepsilon),$
	it suffices to show that $\eta_{ \varepsilon}^{\ominus 2\varepsilon} \subset \eta$. Let $z \in \eta_{ \varepsilon}^{\ominus 2\varepsilon}$ be arbitrary. In particular, $$\dist(z, \Rng(\phi(\varepsilon))) + \varepsilon \le \dist(z, \Rng(\widetilde{\phi}(\varepsilon))) - \varepsilon,$$ so that
	$$\dist(z, \Rng(\phi)) \le \dist(z, \Rng(\phi(\varepsilon))) + \varepsilon \le \dist(z, \Rng(\widetilde{\phi}(\varepsilon))) - \varepsilon \le \dist(z, \Rng(\widetilde{\phi})),$$
	as required.
\end{proof}

\begin{lemma}\label{lem:notin_ball}
	Let $ r \geq 2 + 3\varepsilon$. Then, $o \notin \vPsi_{r}(\eta_\varepsilon).$
\end{lemma}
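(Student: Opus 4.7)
The plan is to argue by contradiction, assuming $o \in \vPsi_r(\eta_\varepsilon)$ and deriving an impossible ball configuration inside $\eta$.

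First I would apply Lemma~\ref{lem:HausdorffDistance} with radius $r - 2\varepsilon \ge 2+\varepsilon$ to convert the assumption into the existence of a ball $B(v, r - 2\varepsilon) \subset \eta$ that meets $B(o, 2\varepsilon)$, so that $|v| \le r$. Next, shrink this ball concentrically by sliding its center toward the origin along the segment $[o, v]$, obtaining a ball $B(v', 2 + \varepsilon) \subset B(v, r - 2\varepsilon) \subset \eta$ with $|v'| \le 2 + 3\varepsilon$ (set $v' = v$ if $|v| \le 2 + 3\varepsilon$, and $v' = (2+3\varepsilon)\,v/|v|$ otherwise). The problem thereby reduces to ruling out any such ball $B(v', 2+\varepsilon)$ inside $\eta$.

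The geometric heart of the argument is the identification of $12$ critical points on $\partial \eta$ lying at distance exactly $2$ from the origin. Let $\hat{n}_1, \ldots, \hat{n}_{12}$ be the unit vectors pointing towards the edge-midpoints of the cube with vertices $\{\pm 1\}^3$; they form the vertices of a cuboctahedron on the unit sphere. Set $p_i := 2\hat{n}_i$. For each $i$ the pair of parallel near-cube edges of $\Rng(\phi)$ and $\Rng(\widetilde{\phi})$ lying closest to $p_i$ (for instance $[(1,1,-1),(1,1,1)]$ and $[(c,c,-c),(c,c,c)]$ when $\hat{n}_1=(1,1,0)/\sqrt{2}$) lie at the common distance $2-\sqrt{2}$, because $\sqrt{2}(1+c)/2 = 2$ using $1+c = 2\sqrt{2}$. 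Hence $p_i \in \partial\eta$. Moving from $p_i$ a small distance $\delta>0$ along $\hat{n}_i$ decreases the distance to $\Rng(\widetilde{\phi})$ and increases the distance to $\Rng(\phi)$ by exactly $\delta$, so $p_i + \delta \hat{n}_i \in \eta^c$ for all sufficiently small $\delta > 0$.

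Finally I would invoke the fact that the cuboctahedron has covering radius $45^\circ$: writing $\hat{w}=(a,b,c)$ for a unit vector, $\max_i |\hat{w}\cdot \hat{n}_i|$ equals $(|a|+|b|+|c|-\min\{|a|,|b|,|c|\})/\sqrt{2}$, whose minimum over the unit sphere is $1/\sqrt{2}$, attained at axis directions. Choosing the corresponding $\hat{n}_i$ (and any $\hat{n}_i$ if $v'=o$) gives $v'\cdot\hat{n}_i \ge |v'|/\sqrt{2}$, whence
\[
|v' - p_i|^2 = |v'|^2 + 4 - 4\,v'\cdot\hat{n}_i \le 4 + |v'|\bigl(|v'| - 2\sqrt{2}\bigr) < 4,
\]
since condition (\ref{eq:condition_eps}) yields $|v'| \le 2 + 3\varepsilon < 2\sqrt{2}$. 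Therefore $p_i$ lies in the open ball of radius $2 < 2+\varepsilon$ around $v'$, and so does $p_i + \delta \hat{n}_i$ for small enough $\delta > 0$. This produces a point of $\eta^c$ inside $B(v', 2 + \varepsilon)$, contradicting $B(v', 2+\varepsilon) \subset \eta$.

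The main obstacle is verifying that the identified pair of near-cube edges is globally closest to $p_i$, i.e. ruling out the remaining near-cube edges (which contribute distances $\sqrt{7 - 4\sqrt{2}} > 2 - \sqrt{2}$) and the far-cube edges at scale $6(\sqrt{3}c+\varepsilon)$ (which are vastly larger). Once this geometric bookkeeping is in place, the remaining ingredients are a short algebraic inequality and the standard covering-radius computation for the cuboctahedron.
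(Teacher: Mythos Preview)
Your argument is correct and takes a genuinely cleaner route than the paper's. Both proofs begin the same way: use Lemma~\ref{lem:HausdorffDistance} (and anti-extensivity of the opening) to reduce to the claim that no closed ball of radius $2+\varepsilon$ centered in $B(o,2+3\varepsilon)$ can be contained in $\eta$. From there the paper proceeds by a case split on the size of the largest coordinate of the center, using the single boundary point $p_0=(0,\sqrt{2},\sqrt{2})$ in the first case and an ad hoc collection of four points $p_1,\ldots,p_4$ (three of them on $\partial\eta$, one in $\eta^c$) together with a small convex-hull construction in the second. You instead exploit the full octahedral symmetry: the twelve points $2\hat n_i$ at the cuboctahedron vertices all lie on $\partial\eta$ (your $p_i$ for $\hat n_1=(1,1,0)/\sqrt2$ coincides, up to a permutation, with the paper's $p_0$), and the covering-radius fact $\max_i\hat w\cdot\hat n_i\ge1/\sqrt2$ gives the required distance bound uniformly, with no case split. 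This is both shorter and more transparent; the paper's Case~(ii) is precisely the region not covered by the single cuboctahedron vertex it happened to pick in Case~(i).

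Two minor remarks. First, the displayed inequality should read $|v'-p_i|^2\le 4$ rather than $<4$, since equality occurs at $v'=o$; this is harmless because you only need $|v'-p_i|\le 2<2+\varepsilon$ to place $p_i+\delta\hat n_i$ in the open ball for small $\delta$. Second, the ``bookkeeping'' you flag at the end also has to include the diagonal edges $[x_i,x_{i+8}]$ and $[\widetilde x_i,\widetilde x_{i+8}]$ connecting the inner and outer cubes in $\Rng(\phi)$ and $\Rng(\widetilde\phi)$; these exist by Lemma~\ref{lem:edges_in_rng_from_outside}(ii), but their nearest point to $p_i$ is the inner-cube endpoint, at distance $\sqrt{7-4\sqrt2}$ and $\sqrt{15-8\sqrt2}$ respectively, both safely above $2-\sqrt2$.
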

\begin{proof}
	At first, we show that $o \notin \vPsi_{r}(\eta) \oplus B(o, 2\varepsilon)$ for each $r \geq 2 + \varepsilon.$ Then, by Lemma~\ref{lem:HausdorffDistance}, it holds that $o \notin \vPsi_{r}(\eta_{\varepsilon})$ for each $r \geq 2 + 3 \varepsilon.$ Note that $\vPsi$ is anti-extensive \citep{matheron.1975}, i.e. $\vPsi_{r}(\eta) \subset \vPsi_{s}(\eta)$ for each $r \geq s.$ Thus it is sufficient to prove $o \notin \vPsi_{2 + \varepsilon}(\eta) \oplus B(o, 2\varepsilon),$ which is equivalent to $B(o,2 + 3 \varepsilon) \subset \eta^{c} \oplus B(o,2 + \varepsilon)$. Let now $z=(z_1, z_2, z_3) \in B(o,2 + 3 \varepsilon).$ To prove $z \in \eta^{c} \oplus B(o,2 + 2\varepsilon),$ we assume without loss of generality that $z_3 \geq z_2 \geq z_1 \geq 0.$ Furthermore, we treat the following two cases separately
	\begin{enumerate}
			\item[(i)] $z_3 \leq c$ and
			\item[(ii)] $z_3 > c.$
	\end{enumerate}
	Recall that $c = 2 \sqrt{2} - 1$. For case (i), consider the point $p_0 = (0, \sqrt{2}, \sqrt{2}).$ As
\begin{equation*}
\dist(\Rng(\phi), p_0) = \sqrt{2} (\sqrt{2} - 1) = \dist(\Rng(\widetilde{\phi}), p_0),
\end{equation*}  we obtain $p_0 \in \partial \eta.$ Additionally, 
	\begin{align*}
		|z - p_0| &= \sqrt{z_1^2 + (\sqrt{2} - z_2)^2 + (\sqrt{2} - z_3)^2} \\ 
		& \leq \max \left\lbrace \max_{z_3 \leq \sqrt{2}} \sqrt{z_1^2 + (\sqrt{2} - z_2)^2 + (\sqrt{2} - z_3)^2}, \max_{z_3 > \sqrt{2}} \sqrt{z_1^2 + (\sqrt{2} - z_2)^2 + (\sqrt{2} - z_3)^2} \right\rbrace
		\\ &\leq \max \left\lbrace \sqrt{z_1^2 + 2(\sqrt{2} - z_1)^2}, \sqrt{(2\sqrt{2} - 1)^2 + 2(\sqrt{2} - 1)^2} \right\rbrace \leq 2,
	\end{align*}
	which proves $z \in \eta^c \oplus B(o,2 + \varepsilon)$ in case (i). Next, we prove $z \in\eta^c \oplus B(o,2 + \varepsilon)$ for case~(ii). Note that in case (ii) it holds $\max \lbrace z_1, z_2 \rbrace \leq c$. Otherwise, using \eqref{eq:condition_eps} we would obtain $|z| \geq \sqrt{2} c = 4 - \sqrt{2} > 2 + 3(\sqrt{3} - \sqrt{2})/2 \geq 2 + 3\varepsilon,$ which contradicts $z \in B(o,2 + 3 \varepsilon).$ We consider the set of points $P = \lbrace p_1, p_2, p_3, p_4 \rbrace,$ where $p_1 = (c, c, c)$, $p_2=(c, 0, c)$, $p_3=(0, 0, 2\sqrt{2})$ and $p_4 = (0, c, c),$ see Figure~\ref{fig:construction_lemma29}.  
	Thus, $P \cap \mathring{\eta} = \emptyset$ as
	\begin{enumerate}
		    	\item[(i$^{\prime}$)] $\dist(\Rng(\phi), p_1) = 0 = \dist(\Rng(\widetilde{\phi}), p_1)$,
				\item[(ii$^{\prime}$)] $\dist(\Rng(\phi), p_i) = 2\sqrt{2} (\sqrt{2} - 1) > 0 =  \dist(\Rng(\widetilde{\phi}), p_i),$ for $i \in \lbrace 2,4 \rbrace,$
				\item[(iii$^{\prime}$)] $\dist(\Rng(\phi), p_3) = \sqrt{(2\sqrt{2} - 1)^2 + 1} =  \dist(\Rng(\widetilde{\phi}), p_3).$
	\end{enumerate}
	Note that the edge connecting $(1,1,1)$ with $6(\sqrt{3}c + \varepsilon) \cdot (1,1,1)$ goes through $p_1$ and thus $\dist(\Rng(\phi), p_1) = 0$. It is $p_1 \notin \mathring{\eta}$ since any neighborhood of $p_1$ is intersected by three edges of $\Rng(\widetilde{\phi}).$ Moreover, consider the set
	$ P' = \conv \lbrace p_1, p_2, p'_3, p_4 \rbrace \oplus (\lbrace 0 \rbrace \times \lbrace 0 \rbrace \times [0,1]),$
	where $p'_3 = (0, 0, c).$ For arbitrary $z' \in P' \cap (\R^{2} \times \lbrace c \rbrace)$ it holds 
		$$ \min_{1 \leq i \leq 4} |p_i - z'| \leq |p_3 - (c/2, c/2, c)| = \sqrt{c^2/2 + 1} < \sqrt{3}.$$
	Thus
		$$ \min_{1 \leq i \leq 4} |p_i - z'| \leq \sqrt{1 + 3} =  2$$
		for arbitrary  $z' \in P',$ which proves
		$P' \subset P \oplus B(o,2).$ Since
		$ c < z_3 < 2 + 3\varepsilon \leq 2 + 3(\sqrt{3} - \sqrt{2})/2 \leq 2 \sqrt{2},$ it is $z \in P'$. Thus $z \in \eta^c \oplus B(o, 2 + \varepsilon)$ in case (ii), which means that $o \notin \vPsi_{r}(\eta) \oplus B(o, 2\varepsilon)$ is verified for each $r \geq 2 + \varepsilon$. 
\end{proof}

\begin{figure}
	\includegraphics[width=0.45\textwidth]{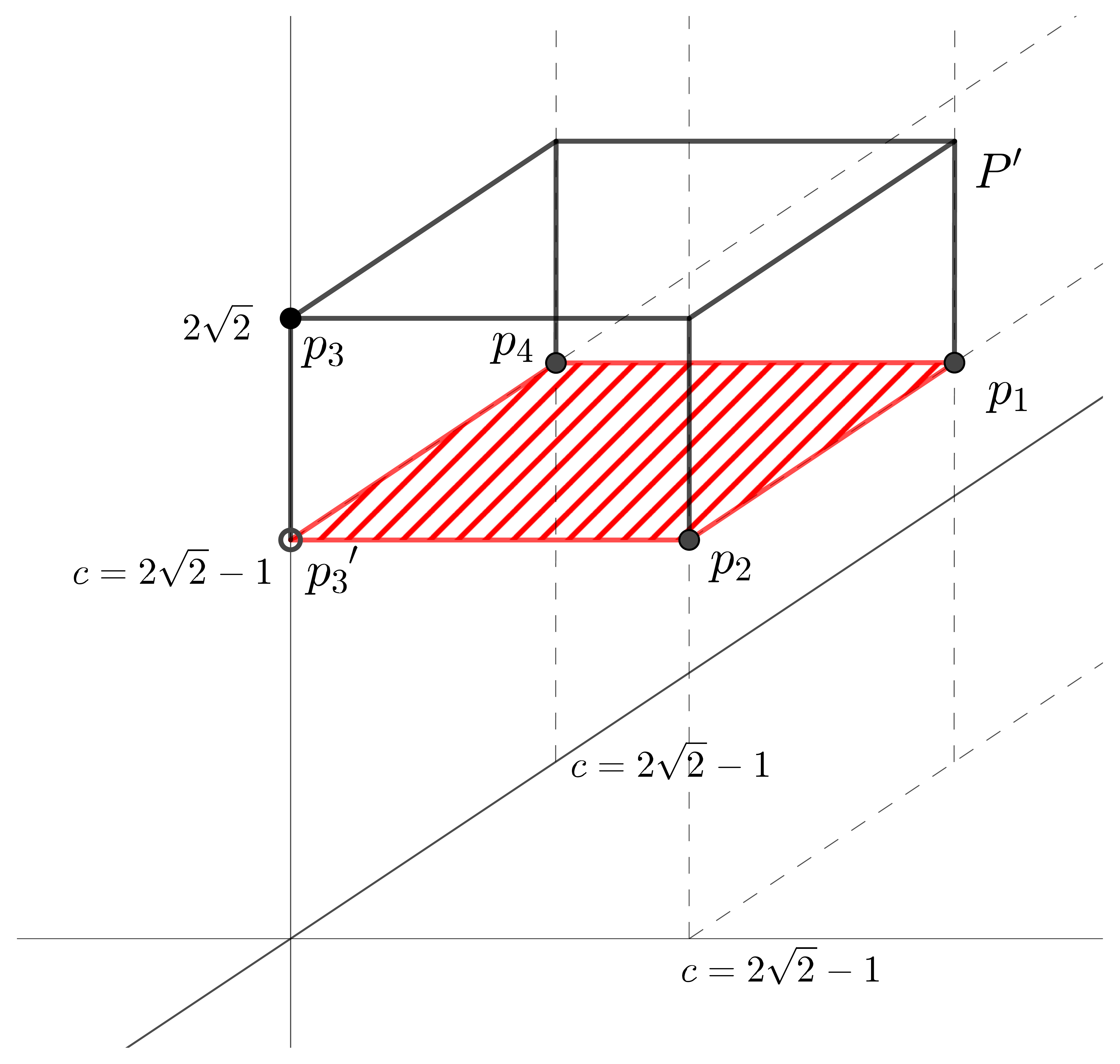}
	\caption{The sets $P$ and $P^{\prime}$ used for case (ii) in the proof of Lemma~\ref{lem:notin_ball}.}\label{fig:construction_lemma29}
\end{figure}

\begin{lemma}\label{lem:inclusion_of_ball}
	Let $\varepsilon > 0$. Then, $B(o,2 - 2\varepsilon) \subset \eta_{ \varepsilon}.$
\end{lemma}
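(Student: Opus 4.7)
My plan to prove Lemma~\ref{lem:inclusion_of_ball} proceeds in three steps.

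First, I would remove the perturbation. By Lemmas~\ref{lem:edges_in_rng_on_cube} and~\ref{lem:edges_in_rng_from_outside}, the graphs $\Rng(\phi(\varepsilon))$ and $\Rng(\widetilde{\phi}(\varepsilon))$ share the combinatorial topology of $\Rng(\phi)$ and $\Rng(\widetilde{\phi})$ respectively, with every vertex displaced by at most $\varepsilon$. Consequently each edge of a perturbed graph lies within Hausdorff distance $\varepsilon$ of its unperturbed counterpart, yielding the one-sided bounds $\dist(z, \Rng(\phi(\varepsilon))) \leq \dist(z, \Rng(\phi)) + \varepsilon$ and $\dist(z, \Rng(\widetilde{\phi}(\varepsilon))) \geq \dist(z, \Rng(\widetilde{\phi})) - \varepsilon$. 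It therefore suffices to prove the sharpened gap estimate $\dist(z, \Rng(\widetilde{\phi})) - \dist(z, \Rng(\phi)) \geq 2\varepsilon$ for every $z \in B(o, 2-2\varepsilon)$.

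Second, I would exploit the octahedral symmetry of $\phi \cup \widetilde{\phi}$ (invariance under sign changes and coordinate permutations) to assume $0 \leq z_1 \leq z_2 \leq z_3$. The outer-shield vertices lie at distance at least $6(\sqrt{3}c + \varepsilon) - \sqrt{3}$ from the origin, so they cannot realise the relevant distances, and only the inner-cube edges matter. A direct minimization over the twelve inner-cube edges of $\Rng(\phi)$ picks out $\{(t,1,1) : t \in [-1,1]\}$ as closest, giving $\dist(z, \Rng(\phi))^2 = \dist(z_1, [-1,1])^2 + (1-z_2)^2 + (1-z_3)^2$, and analogously $\dist(z, \Rng(\widetilde{\phi}))^2 = (c-z_2)^2 + (c-z_3)^2$, where the $z_1$-clamping to $[-c,c]$ is inactive because $|z| \leq 2-2\varepsilon$ together with \eqref{eq:condition_eps} forces $z_1 \leq c$.

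Third, I would verify the sharpened gap case by case after the substitution $u = 1-z_2$, $v = 1-z_3$, $s = c - 1 = 2\sqrt{2}-2$. In the main case $z_1 \leq 1$ and $u, v \geq 0$, squaring reduces the claim to $2s(u+v) + 2s^2 \geq 4\varepsilon\sqrt{u^2+v^2} + 4\varepsilon^2$, which follows from the elementary bound $u+v \geq \sqrt{u^2+v^2}$ (valid when $u, v \geq 0$) combined with $s \geq \sqrt{2}\,\varepsilon$, a comfortable consequence of $\varepsilon < (\sqrt{3}-\sqrt{2})/2$. The main obstacle is the residual boundary regime in which some coordinate of $z$ exceeds $1$, so that $u$ or $v$ turns negative or the clamping term $\dist(z_1, [-1,1])^2$ activates: there the inequality $u+v \geq \sqrt{u^2+v^2}$ fails, and one must instead exploit the quadratic constraint $z_1^2 + z_2^2 + z_3^2 \leq (2-2\varepsilon)^2$ to eliminate a variable and recover the $2\varepsilon$ slack; equivalently, one can show that the gap function $z \mapsto \dist(z, \Rng(\widetilde{\phi})) - \dist(z, \Rng(\phi))$ attains its infimum on $\overline{B(o, 2-2\varepsilon)}$ at an axial boundary point, reducing the verification to a one-parameter calculation.
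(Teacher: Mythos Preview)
Your proposal is correct and follows essentially the same route as the paper: both reduce from $\eta_\varepsilon$ to the unperturbed $\eta$ via the $\varepsilon$-Hausdorff bound on edges, pass to the ordered positive octant by octahedral symmetry, identify the competing nearest edges $\{(t,1,1)\}$ and $\{(t,c,c)\}$, and close with a case analysis. One difference worth noting: the paper splits along $z_1<1$ versus $z_1\ge1$ and in the first case works with the squared-distance criterion, which boils down to $z_2+z_3\le c+1=2\sqrt2$ and is verified by Lagrange multipliers on the sphere; this already absorbs the range $z_3>1$ that you flag as your ``main obstacle,'' so the residual case the paper has to treat by hand (your clamping regime $z_1\ge1$) is strictly smaller than yours and avoids the $u+v\ge\sqrt{u^2+v^2}$ detour. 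Also, for your main case you actually need $s\ge 2\varepsilon$ (not $s\ge\sqrt2\,\varepsilon$) to push $2s(u+v)\ge4\varepsilon\sqrt{u^2+v^2}$ through, but this still holds comfortably under~\eqref{eq:condition_eps}.
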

\begin{proof}
	At first, we reduce the assertion to $B(o,2) \subset \eta$. Indeed, in order to deduce that $B(o,2 - 2\varepsilon) \subset \eta_{ \varepsilon}$, let now $z \in B(o,2 - 2\varepsilon)$. Then, using $B(o,2) \subset \eta$, we obtain
	\begin{align*}
		\dist(z, \Rng(\phi(\varepsilon))) \leq \dist(z, \Rng(\phi)) + \varepsilon \leq \dist(z, \Rng(\widetilde{\phi})) -  \varepsilon \leq \dist(z, \Rng(\widetilde{\phi}(\varepsilon))),
	\end{align*}
	which proves the assertion. To prove that $B(o, 2) \subset \eta$, let now $z = (z_{1}, z_{2}, z_{3}) \in B(o, 2)$ be arbitrary. We may assume without loss of generality that $z_3 \geq z_2 \geq z_1 \geq 0.$ Then, the distance between $z$ and $\Rng(\varphi)$ is minimized at the edge $[(-1,1,1),(1,1,1)].$ We treat the following two cases separately: 
	\begin{itemize}
		\item[(i)] $z_1 < 1$ and
		\item[(ii)] $z_1 \geq 1.$
	\end{itemize}
	In the first case the distance from $z$ to $\Rng(\varphi)$ is minimized at the interior of the edge of $\Rng(\varphi)$, whereas in the second case it is minimized at a vertex of $\Rng(\varphi).$ In case (i), we obtain
	$$\dist(z, \Rng(\phi)) \leq  \sqrt{\sum_{2 \le i \le 3}(z_{i} - 1)^2}\quad \text{ and }\quad \dist(z, \Rng(\widetilde{\phi})) =  \sqrt{\sum_{2 \le i \le 3}(z_{i} - c)^2},$$
	so that $z \in \eta$ follows from
	$z_{2} + z_{3} \leq 2\sqrt{2}.$
	 In particular, using the method of Lagrange multipliers,
	\begin{equation}\label{eq:i}
	\max \lbrace z_{2} + z_{3}:\, z \in B(o,2)\rbrace = \sqrt{8 - 2z_{1}^{2}} \leq 2\sqrt{2},
	\end{equation}
	so that $z \in \eta$. To deal with the second case, note that it is sufficient to prove the claim for $z \in \partial B(o,2).$ The result can be transferred to $z'$ with $|z'| < 2$ as
	$$\dist(z',\Rng(\widetilde{\phi})) = |z''-z'|+\dist(z'',\Rng(\widetilde{\phi})) \geq |z''-z'|+\dist(z'',\Rng(\phi)) \geq \dist(z',\Rng(\phi))$$
	for some $z'' \in \partial B(o,2).$
	Moreover, we may assume that  $z_{1} < 2/\sqrt{3} < c$. Otherwise, $|z| > \sqrt{3} (2/\sqrt{3}) > 2$ would contradict $|z| \le 2$. Then,
	$$\dist(z, \Rng(\phi)) \leq \sqrt{\sum_{1 \le i \le 3}(z_{i} - 1)^2} \quad\text{ and }\quad \dist(z, \Rng(\widetilde{\phi})) = \sqrt{\sum_{2 \le i \le 3}(z_{i} - c)^2},$$
	so that  $z \in \eta$ follows from
	\begin{equation*}\label{eq:ii}
		\frac{(z_1 - 1)^2}{c-1} + 2(z_2 + z_3) \leq 4\sqrt{2}.
	\end{equation*}
	Proceeding analogously to~\eqref{eq:i}, we see that it remains to verify
	$$\frac{(z_1 - 1)^2}{c-1} + 2 \sqrt{8 - 2z_1^2} \leq 4\sqrt{2}.$$
	As $1 < z_{1} < 2/\sqrt{3}$, the left-hand side is at most 
	$$\frac{(2/\sqrt{3} - 1)^2}{2\sqrt{2} - 2} + 2 \sqrt{6},$$
	which is smaller than $4\sqrt{2}$.
\end{proof}

\begin{proof}[Proof of Theorem \ref{thm:granulometry_decreasing}]
	We construct a specific event $\vA \in \A$ that occurs with positive probability and implies that $o \in \vXi^{\ominus \vr_1} \setminus \vPsi_{\vr_2}(\vXi)$.    To achieve this goal, we consider configurations where $\vX^{(1)}$ has points close to $\phi$ whereas $\vX^{(2)}, \ldots, \vX^{(k)}$ have points close to $\widetilde{\phi}$. In order to avoid possible disturbances coming from edges of the RNG to other points of $\vX^{(2)}, \ldots, \vX^{(k)}$, it is important that Poisson points are located in the neighborhood of $x_9, \ldots, x_{16}$ and $\widetilde{x}_9, \ldots, \widetilde{x}_{16}$. More precisely, we put     \begin{align*}
		\vA_1 = \bigcap_{\substack{1 \le i \le 16 \\ 2 \le k' \le k}}\left\lbrace \# (\vX^{(1)} \cap B(\vx_{i}, \varepsilon)) = \# (\vX^{(k')} \cap B(\widetilde{\vx}_{i}, \varepsilon)) = 1 \right\rbrace.
	\end{align*}
	In order to ensure that these points give rise to the desired configurations, we also assume that there are no further points in a sufficiently large neighborhood. More precisely, we put
	$$   \vA_2 = \bigcap_{1 \le k' \le k} \Big \lbrace \vX^{(k^{\prime})} \cap B(o,  54c^2 + 36 c \varepsilon) \subset  \bigcup_{1 \le i\le 16} \left(		B(\vx_{i}, \varepsilon) \cup B(\widetilde{\vx}_{i}, \varepsilon) \right) \Big\rbrace,
	$$
	and set $\vA = \vA_1 \cap \vA_2$. Since the family $\{\vX^{(i)}\}_{1 \le i \le 8}$ consists of independent Poisson point processes, we conclude that $A$ has positive probability. It remains to show that under the event $A$ we have that $o \in \vXi^{\ominus \vr_1} \setminus \vPsi_{\vr_2}(\vXi)$. Note that in each realization contained in $\vA$ there are 16 points of $\vX^{(1)}$ and 16 points of $\vX^{(2)}, \ldots, \vX^{(k)}$ contained in $B(o, 54c^2 + 36 c \varepsilon)$. We denote the points of $\vX^{(1)}$ in $\cup_{1 \leq i \leq 8}(B(\vx_{i}, \varepsilon) \cup B(\vx_{i+8}, \varepsilon))$ by $\vy_{1}, \ldots, \vy_{16}$ and the points of $\vX^{(2)}, \ldots, \vX^{(k)}$ in $\cup_{1 \leq i \leq 8}(B(\widetilde{\vx_{i}}, \varepsilon) \cup B(\widetilde{\vx}_{i+8}, \varepsilon))$ by $\widetilde{\vy}_{k^\prime, 1}, \ldots, \widetilde{\vy}_{k^\prime, 16},$ for all $2 \le k^{\prime} \le k$ where $\vy_1, \ldots, \vy_8$ and $\widetilde{\vy}_{k^\prime, 1}, \ldots, \widetilde{\vy}_{k^\prime, 8}$ form the inner cubes. The relative neighborhood graphs restricted to the points within $B(o,  54c^2 + 36 c \varepsilon)$ exhibit the same topology for all realizations of $A$ by Lemmas~\ref{lem:edges_in_rng_on_cube} and~\ref{lem:edges_in_rng_from_outside}. 
	It remains to show that under the event $A$ we have $o \in \vXi^{\ominus \vr_1} \setminus \vPsi_{\vr_2}(\vXi)$. By Lemma~\ref{lem:edges_in_rng_from_outside}, under the event $A_1$ only those points of $\vX^{(1)}, \ldots, \vX^{(k)}$ located within $B(o,  54c^2 + 36 c \varepsilon)$ are relevant for the event $\lbrace o \in \vXi^{\ominus \vr_1} \setminus \vPsi_{\vr_2}(\vXi)\rbrace$. Furthermore, as indicated in the beginning of the proof, this event is not influenced by edges having a vertex in $\cup_{9 \le i\le 16}B(\vx_{i}, \varepsilon)$. Since $r_1 < 2 - \varepsilon$, we may therefore apply Lemma~\ref{lem:inclusion_of_ball} to conclude that $B(o, \vr_1) \subset \vXi$. Since $r_2 = 2 + 3 \varepsilon,$ we obtain $o \notin \vPsi_{\vr_2}(\vXi)$ by Lemma~\ref{lem:notin_ball}. 
\end{proof}

\section{Geometry of the Voronoi mollification}\label{app:condition_c}
As explained in Section~\ref{constRngSec}, the only missing ingredient in the proof of Lemma~\ref{highProbLem} is the connectivity condition {\bf (C')}. This is surprisingly complicated and requires a detailed analysis of the cell geometry in the Voronoi mollification.  To make this precise, let $\vG=\{\vI_i\}_{i\ge1}$ be a locally finite system of line segments in $\R^3$. We assume that distinct line segments are either disjoint or have one common endpoint. We also let $\vV=\cup_{i\ge1}(\vI_i\setminus \mathring{\vI}_i)$ denote the vertex set of $\vG$, where $\mathring{\vI}_i$ denotes the relative interior of the line segment $I_i$. Now, using a similar construction as in~\citep{chew.1998}, we introduce a \emph{line-segment Voronoi tessellation} associated with the vertices and edges of the graph $\vG$. A first idea would be to define the cell associated to a line segment $\vI$ as the family of all points of $\R^3$ whose closest point on $\vG$ is located on $\vI$. However, this would mean that cells associated to line segments meeting at a common vertex share some non-empty interior. This problem is illustrated in Figure~\ref{cellFig}. To preserve consistency with the intuitive concept of a tessellation, we provide a different definition that treats points and line segments separately. 

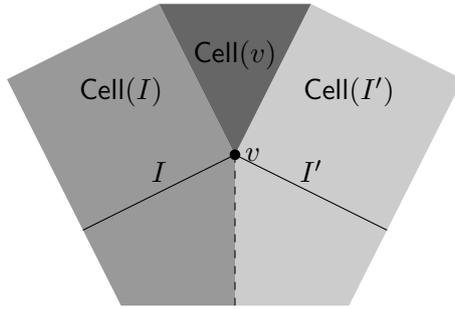
\begin{figure}[!htpb]
	\centering
	\begin{tikzpicture}
		\fill[black!20!white] (0,0)--(0,-2)--(1.5,-2)--(3,1)--(1,2);
		\fill[black!40!white] (0,0)--(0,-2)--(-1.5,-2)--(-3,1)--(-1,2);
		\fill[black!60!white] (-1,2)--(0,0)--(1,2);
		\draw (-2,-1)--(0,0);
		\draw (0,0)--(2,-1);
		\draw[dashed] (0,0)--(0,-2);
		\fill (0,0) circle (2pt);
		\coordinate[label=90: {$\vCell(v)$}] (v) at (0,1);
		\coordinate[label=90: {$\vCell(I)$}] (v) at (-1.5,0.5);
		\coordinate[label=90: {$\vCell(I')$}] (v) at (1.5,0.5);
		\coordinate[label=00: {$v$}] (o) at (0,0);
		\coordinate[label=90: {$I$}] (o) at (-1,-0.5);
		\coordinate[label=90: {$I'$}] (o) at (1,-0.5);
	\end{tikzpicture}
	\caption{Two-dimensional illustration of line-segment Voronoi cells (cut-out)}
	\label{cellFig}
\end{figure}

More precisely, if $v\in\vV$ we let 
$\vCell(v)=\{x\in\R^3:\, \dist(x,v)\le\dist(x,\cup_{i\ge1}I_i)\}$
denote the family of all points whose closest point on $\cup_{i\ge1}I_i$ is given by $v$. 
Similarly, if $\vI\in\vG$ we let 
$$\vCell(\vI)=\overline{\{x\in\R^3:\, \dist(x,\vI)<\dist(x,(\cup_{i\ge}\vI_i)\setminus\mathring{\vI})\}}$$

denote the closure of the open cell of points that are closer to $\vI$ than to any other edge of $\vG$. The two types of cells are illustrated in Figure~\ref{cellFig} and are referred to as \emph{point cell} and \emph{line-segment cell} in the following.

The geometry of the line-segment Voronoi tessellation is substantially more intricate than the geometry of a standard Voronoi tessellation induced by a locally finite set of points. Indeed, whereas all cells in the standard Voronoi tessellation are convex sets with planar boundaries, in the line-segment model also quadratic surfaces may appear as boundary shapes. More precisely, the following three possibilities can be observed:
\begin{enumerate}
\item \emph{a plane}, as boundary between two cells associated either with two vertices or with two coplanar line segments,
\item \emph{a parabolic cylinder}, as boundary between a cell associated with a vertex and a cell associated with a line segment, and
\item \emph{a hyperbolic paraboloid}, as boundary between two cells associated with skew line segments.
\end{enumerate}

For the standard Voronoi tessellation in $\R^3$ with generating points in normal position, it is a classical result that any four cells meet in at most one point~\citep{schneider.2008}. Indeed, as the locus of all points having equal distance to two points is a plane, we see that the locus of points that have equal distance to three points is a line. This implies the result, since any two distinct lines in $\R^3$ can intersect in at most $1$ point. 

This string of argumentation does not extend immediately to line-segment Voronoi tessellations, as not only planes but also smooth quadrics can appear as cell boundaries. Therefore, also the intersection of three cells is no longer a line but can be a rather complicated algebraic curve. Nevertheless, according to Sard's theorem~\citep[Chapter 6]{lee.2013} one should expect the intersection curves to be almost surely non-singular, so that any two of them can only intersect in finitely many points. Fortunately, since we are only dealing with quadrics and not general curves, we can make this idea rigorous without resorting to the general form of Sard's theorem.
Let $\Vor(\Rng(\vX\vda{1})\cup\Rng(\vX\vda{2}))$ denote the line-segment Voronoi tessellation. To simplify terminology, we say that a point $x\in\R^3$ is a \emph{quad point} if it is contained in the intersection of four distinct cells of $\Vor(\Rng(\vX\vda{1})\cup\Rng(\vX\vda{2}))$. 

\begin{lemma}
\label{intSectLem}
With probability $1$, the family of quad points is locally finite.
\end{lemma}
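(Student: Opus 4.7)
The plan is to fix an arbitrary bounded Borel set $W \subset \R^3$ and show that almost surely there are only finitely many quad points in $W$; local finiteness then follows by exhausting $\R^3$ with a countable family of such sets.

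First, I would verify that almost surely only finitely many cells of $\Vor(\Rng(\vX\vda{1}) \cup \Rng(\vX\vda{2}))$ intersect $W$. This follows from local finiteness of $\vX\vda{1}$ and $\vX\vda{2}$, the fact that the coordination number of a Poisson relative neighborhood graph in $\R^3$ is uniformly bounded, and a standard Poisson void-probability argument that rules out cells whose generator lies further than some deterministic multiple of the window diameter away from $W$. Denote this almost surely finite collection of relevant generators by $\mathcal{G}_W$.

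Second, for any four distinct generators $g_1,g_2,g_3,g_4 \in \mathcal{G}_W$, the set of points of $\R^3$ at equal distance from all four is cut out by three equations $\dist(\cdot,g_i)^2 = \dist(\cdot,g_j)^2$ for $j=2,3,4$. Since the squared distance from a point of $\R^3$ to either a vertex or a line segment is piecewise a quadratic polynomial, each such equidistance locus is, on every cell of the arrangement determined by the involved linear features, a piece of a plane, parabolic cylinder, or hyperbolic paraboloid. By Bezout's theorem in complex projective space, the intersection of three such quadrics is either zero-dimensional with at most $8$ common real points, or positive-dimensional. In the generic, zero-dimensional case, each quadruple contributes at most $8$ quad points, and combined with the finiteness of $\mathcal{G}_W$ this yields the claim.

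Third, I would argue that the degenerate case, in which the three quadrics share an algebraic curve, has probability zero. Each generator is determined by at most two Poisson points, so the coefficients of the three quadric equations are polynomial functions of at most eight Poisson points of $\vX\vda{1}\cup\vX\vda{2}$. Positive-dimensional intersection imposes a non-trivial system of polynomial identities on these points, and therefore confines the degenerate configurations to a proper algebraic subvariety of $(\R^3)^8$. Since the joint law of any finite collection of Poisson points in a bounded region is absolutely continuous with respect to Lebesgue measure, each such event has probability zero. As there are only countably many labellings of at most eight points from $\vX\vda{1}\cup\vX\vda{2}$, a countable union preserves this null property. The main obstacle will be making this step genuinely rigorous: for each combinatorial type of quadruple (four vertices; three vertices and one segment; and so on) and each choice of facet identification in the piecewise definition of the squared distance, one has to exhibit an explicit generic placement of the underlying Poisson points for which the three quadrics meet in a finite set, thereby certifying that the failure locus is a proper subvariety rather than all of $(\R^3)^8$. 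This case-by-case bookkeeping, especially in the mixed point/segment case where one boundary is a parabolic cylinder and the segment/segment case where two hyperbolic paraboloids may share asymptotic directions, appears to be the technical heart of the argument.
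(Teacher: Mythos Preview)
Your proposal is correct in spirit and shares the same high-level strategy as the paper: reduce to finitely many relevant cells in a bounded window, then show that four cells can share only finitely many points except on a measure-zero set of Poisson configurations. The execution differs in one useful respect.

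You plan to apply B\'ezout directly to the three quadric equations $\dist(\cdot,g_1)^2 = \dist(\cdot,g_j)^2$ and then rule out the positive-dimensional case by a case-by-case exhibition of generic configurations, which you rightly flag as the technical heart. The paper instead works one level down: it first shows that the intersection of any \emph{two} cell-boundary quadrics is almost surely a non-singular algebraic curve, using the discriminant $\Delta_{Q,Q'}$ of the characteristic polynomial of the pencil of quadrics (borrowed from the computational-geometry literature on line Voronoi diagrams, \citep{everett.2009}). Four cells then determine several such curves, and after checking that with probability~1 no two of them coincide, any pair meets in a zero-dimensional, hence finite, set by elementary dimension theory.

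What this buys the paper is a uniform non-vanishing argument: $\Delta_{Q,Q'}$ is a single polynomial in the Poisson coordinates, and it cannot be identically zero because \emph{some} pair of cell-boundary quadrics of the relevant type visibly meets in a smooth curve. That observation, plus an induction on the polynomial degree, replaces the per-combinatorial-type bookkeeping you anticipate (four vertices; three vertices and one segment; mixed parabolic-cylinder/hyperbolic-paraboloid boundaries; etc.). Your route would also go through, but the case analysis you describe is exactly what the paper's discriminant trick is designed to avoid, and it is worth knowing as a shortcut.
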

\begin{proof}

	We assert that with probability $1$, the intersection of any three cells is given by a non-singular algebraic curve. Once this assertion is shown, basic algebraic geometry implies that quad points are locally finite. Indeed, by~\citep[Proposition 5.5]{liu.2008} the intersection of any two non-identical non-singular curves is of dimension $0$ and therefore consists of a finite union of points~\citep[Proposition 4.9, Lemma 5.11]{liu.2008}. Moreover, with probability $1$, the four curves associated to the intersection of triplets of cells are non-identical. We show this only for the case of line-segment cells, since the proof for point cells is similar but easier. If two curves were identical, then one of the lines would be tangent to the ball touching the other three lines around the closest point to the origin on one of these curves. But this is an event of probability 0. It remains to show that with probability $1$, the intersection set of any three cells is given by a non-singular algebraic curve. In fact, for the case that the three cells are deterministic line-segment cells, in~\citep{everett.2009} a substantially more specific algebraic description of the intersection curve is given. However, as we can neglect configurations of probability $0$, we only need a small part of the proof in~\citep{everett.2009}. Hence, we are able to deal with point cells and line-segment cells using the same argument. For any two quadric surfaces $Q$, $Q'$ there exists a \emph{characteristic polynomial} $D_{Q,Q'}(\lambda)$ with the property that $Q$ and $Q'$ intersect in a non-singular curve if and only if $D_{Q,Q'}(\lambda)$ does not admit multiple roots~\citep[p.~100]{everett.2009}. In particular, $Q$ and $Q'$ intersect in a non-singular curve if and only if the associated discriminant $\Delta_{Q,Q'}$ is non-zero. If $Q$ and $Q'$ describe cell boundaries, then the coefficients of the characteristic polynomial are polynomials in the coordinates of points in $X^{(1)} \cup X^{(2)}$. In particular, also $\Delta_{Q,Q'}$ is a polynomial in the coordinates of the Poisson point process. Intuitively speaking, it is clear that coordinates chosen at random do not satisfy any fixed non-trivial algebraic equation. Nevertheless, for the convenience of the reader, we provide some details. First, we observe that $\Delta_{Q,Q'}$  is not the zero polynomial. Indeed, this would imply that the intersection between any two quadrics can \emph{never} be a non-singular curve. In particular, we can fix one of the Poisson coordinates -- call it $u$ -- and consider $\Delta_{Q,Q'}=\Delta_{Q,Q'}(u)$ as polynomial in $u$ with coefficients being polynomial expressions in the remaining Poisson coordinates. By induction on the polynomial degree, we see that when evaluated at Poisson points, these coefficients are almost surely non-zero. In particular, for almost every choice of the remaining Poisson coordinates the equation $\Delta_{Q,Q'}(u) = 0$ has only finitely many roots in $u$. Therefore, also $\Delta_{Q,Q'}$ is almost surely non-zero.
\end{proof}

Lemma~\ref{intSectLem} implies that the family of points that are simultaneously close to four cells of $\Vor(\Rng(\vX\vda{1})\cup\Rng(\vX\vda{2}))$ decomposes into connected components of small diameters. More precisely, for $r>0$ we let $E^{\ms{quad}}_r$ denote the event that in $\Vor(\Rng(\vX\vda{1})\cup\Rng(\vX\vda{2}))$ there exist pairwise distinct cells $C_1,C_2,C_3$ and $C_4$ such that the cells are associated with points or line segments intersecting $\vQ_{3\vL}$  and such that the diameter of each connected component of the set $\cap_{i\le4}(C_i \oplus \vB(o,r))$ is larger than $1$.

\begin{corollary}
	\label{intSectCor}
	Let $L\ge1$ be arbitrary. Then, $\lim_{r\to0} \P(E^{\ms{quad}}_r) = 0$.
\end{corollary}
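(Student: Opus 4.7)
The plan is to show $\one_{E^{\ms{quad}}_r}\to 0$ almost surely as $r\downarrow 0$ and then conclude by dominated convergence. I fix a realization of $(\vX\vda{1},\vX\vda{2})$ on which Lemma~\ref{intSectLem} holds (quad points are locally finite), on which both point patterns are locally finite, and on which every cell of $\Vor(\Rng(\vX\vda{1})\cup\Rng(\vX\vda{2}))$ whose generator (a vertex or line segment) intersects $\vQ_{3\vL}$ is bounded; all three are events of probability $1$, the last following from the standard Poisson Voronoi argument that a.s.\ there are further Poisson points in every direction around $\vQ_{3\vL}$ at bounded distance, combined with the local range of the RNG construction. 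Local finiteness implies that only finitely many admissible 4-tuples $(C_1,C_2,C_3,C_4)$ enter the definition of $E^{\ms{quad}}_r$, so a union bound reduces the task to a single such 4-tuple.

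For a fixed admissible 4-tuple, put $K=C_1\oplus B(o,1)$, which is compact since $C_1$ is bounded, and note that $\bigcap_{i\le 4}(C_i\oplus B(o,r))\subset K$ for every $r<1$. Define the continuous function $f\colon K\to[0,\infty)$ by $f(x)=\max_{i\le 4}\dist(x,C_i)$, so that
\[
  \{x\in K:\,f(x)\le r\}=\bigcap_{i\le 4}(C_i\oplus B(o,r)).
\]
The zero set $\{x\in K:\,f(x)=0\}=\bigcap_{i\le 4}C_i$ is contained in the locally finite quad-point set furnished by Lemma~\ref{intSectLem} and is also contained in the bounded set $K$, hence it is a finite set $\{p_1,\ldots,p_m\}$. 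Choose $\delta\in(0,1/2)$ so that the balls $B(p_j,\delta)$ are pairwise disjoint. On the compact set $K\setminus\bigcup_{j\le m}B(p_j,\delta/2)$ the continuous function $f$ is strictly positive, hence bounded below by some $\eps>0$.

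For every $r<\min\{\eps,1\}$ the sublevel set $\{f\le r\}$ is contained in $\bigcup_{j\le m}B(p_j,\delta/2)$, and consequently each of its connected components is contained in a single ball $B(p_j,\delta/2)$ and therefore has diameter strictly smaller than $1$. Taking the minimum of the resulting thresholds over the finitely many admissible 4-tuples yields an almost surely positive random $r_0(\omega)$ such that $E^{\ms{quad}}_r$ does not occur for any $r<r_0(\omega)$. Thus $\one_{E^{\ms{quad}}_r}\to 0$ almost surely as $r\downarrow 0$, and since $\one_{E^{\ms{quad}}_r}\le 1$, dominated convergence delivers $\P(E^{\ms{quad}}_r)\to 0$.

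The main obstacle I anticipate is the clean a.s.\ boundedness of all admissible cells, because line-segment Voronoi tessellations of Poisson-driven generators are less standard than the classical point case; nevertheless, the required statement is an elementary consequence of the Poisson void probabilities, and the rest of the argument is then a routine compactness reduction on each admissible 4-tuple.
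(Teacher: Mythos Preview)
Your argument is correct and follows essentially the same route as the paper's proof: both fix a realization, use Lemma~\ref{intSectLem} to see that $\bigcap_{i\le4}C_i$ is finite, and then apply a compactness argument to conclude that for small $r$ the dilated intersection $\bigcap_{i\le4}(C_i\oplus B(o,r))$ is contained in pairwise disjoint small balls around the quad points, so that every connected component has diameter below $1$. Your presentation via the continuous function $f(x)=\max_{i\le4}\dist(x,C_i)$ on the compact set $K$ and the dominated convergence step is simply a more explicit rendering of the same idea; the paper compresses this into the line ``the decreasing limit of compact sets $\cap_{r>0}\cap_{i\le4}(C_i\oplus B(o,r))$ equals $\cap_{i\le4}C_i$'' and then chooses $r'$ and $r$ accordingly. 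The almost-sure boundedness of the relevant cells, which you flag as a concern, is also used tacitly in the paper (it is needed for the sets $C_i\oplus B(o,r)$ to be compact), and your justification via Poisson void probabilities is adequate.
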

\begin{proof}
	The decreasing limit of compact sets $\cap_{r>0}\cap_{i\le4}(C_i \oplus \vB(o,r))$ equals $\cap_{i\le4}C_i$, which is finite by Lemma~\ref{intSectLem}. Now, set
	$$r'=\tfrac14\min\Big\{1,\min_{\substack{x,y\in\cap_{i\le4}C_i\\ x\ne y}}|x-y|\Big\}.$$
	In particular, we may choose a sufficiently small $r>0$ such that 
	$$\cap_{i\le4}(C_i \oplus \vB(o,r))\subset(\cap_{i\le4} C_i) \oplus \vB(o,r').$$
	By the choice of $r'$ the connected components of the right-hand side are of diameter at most $1$, as required.
\end{proof}

As mentioned above, the difficulty in verifying condition {\bf (C')} lies in the geometric complexity of the line-segment Voronoi tessellation when compared to the standard Voronoi tessellation. As an important preliminary step, we note that if for $x\in\vXir$ there exists $x'\in\Rng(\vX\vda{1})$ such that for all $y\in\vB(x,r)$ the distance from $y$ to $x'$ is at most the distance from $y$ to $\Rng(\vX\vda{2})$, then we may conclude as in the standard Voronoi tessellation.
\begin{lemma}
	\label{standardConnLem}
	Let $x\in\vXir$ and $x'\in\Rng(\vX\vda{1})$ be such that 
	\begin{align}
		\label{standardConnEq}
		|x-x'|+2r\le\dist(x,\Rng(\vX\vda{2})).
	\end{align}
	Then, the line segment $[x,x']$ is contained in $\vXir$.
\end{lemma}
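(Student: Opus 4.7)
The plan is to verify $[x,x']\subset\vXir$ pointwise by unwinding the definition of the erosion. Since $\vXi_1^{\ominus r}$ consists of those $y$ with $B(y,r)\subset\vXi_1$, and $\vXi_1$ is characterised (for $k=2$) by $\dist(\cdot,\Rng(\vX\vda{1}))\le\dist(\cdot,\Rng(\vX\vda{2}))$, the only thing I need to check is the inequality $\dist(z,\Rng(\vX\vda{1}))\le\dist(z,\Rng(\vX\vda{2}))$ for arbitrary $y\in[x,x']$ and arbitrary $z\in B(y,r)$.

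To this end, I would parametrise $y=(1-t)x+tx'$ with $t\in[0,1]$, so that $|y-x|=t|x-x'|$ and $|y-x'|=(1-t)|x-x'|$. The upper bound on the distance to the first graph uses $x'\in\Rng(\vX\vda{1})$ together with a direct triangle inequality
$$\dist(z,\Rng(\vX\vda{1}))\le|z-x'|\le|z-y|+|y-x'|\le r+(1-t)|x-x'|.$$
For the lower bound on the distance to the second graph, I would pivot around the anchor $x$ and apply the reverse triangle inequality,
$$\dist(z,\Rng(\vX\vda{2}))\ge\dist(x,\Rng(\vX\vda{2}))-|z-x|\ge\dist(x,\Rng(\vX\vda{2}))-r-t|x-x'|.$$

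Combining the two estimates, the required inequality $\dist(z,\Rng(\vX\vda{1}))\le\dist(z,\Rng(\vX\vda{2}))$ reduces to
$$2r+|x-x'|\le\dist(x,\Rng(\vX\vda{2})),$$
where the parameter $t$ cancels via $t|x-x'|+(1-t)|x-x'|=|x-x'|$. This is exactly hypothesis~\eqref{standardConnEq}, which completes the argument.

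I do not anticipate any serious obstacle: the lemma is essentially a triangle inequality statement. The only noteworthy feature of the bookkeeping is the cancellation of the convex-combination parameter $t$, which reflects the fact that the worst case for the upper bound on $\dist(z,\Rng(\vX\vda{1}))$ occurs near $x$ while the worst case for the lower bound on $\dist(z,\Rng(\vX\vda{2}))$ occurs near $x'$, and these two trade off linearly along the segment.
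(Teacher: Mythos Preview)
Your proof is correct and is essentially the same argument as the paper's: both reduce the claim to a single chain of triangle inequalities, bounding the distance to $\Rng(\vX\vda{1})$ from above via $x'$ and the distance to $\Rng(\vX\vda{2})$ from below via the anchor $x$, with the convex parameter $t$ cancelling. The only cosmetic difference is parametrisation: the paper writes the generic point in $B\big((1-t)x+tx',r\big)$ as $y+t(x'-x)$ with $y\in B(x,r)$, whereas you name the segment point $y$ and the ball point $z$.
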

\begin{proof}
	Let $z\in\Rng(\vX\vda{2})$, $t\in[0,1]$ and $y\in\vB(x,r)$ be arbitrary. Then,
	$$|y+t(x'-x)-x'|\le r+|x'-x|-t|x'-x|\le |x-z|-r-t|x-x'|\le |y+t(x'-x)-z|,$$
	as required.
\end{proof}

Now, we can complete the verification of property {\bf (C')}.
\begin{proof}[Proof of {\bf (C')}]
	First, let $L \geq 1$ and
	$$r_0=\tfrac14\min\big\{1,\dist\big(\Rng(\vX\vda{1})\cap Q_{2L},\Rng(\vX\vda{2})\big)\big\}.$$
Now, fix $r\in(0,r_0^2/(20L))$. In particular, $\Rng(\vX\vda{1})\cap\vQ_{2\vL}\subset\vXir$. In the following, let $x\in\vXir.$ According to {\bf (O')}, see Section \ref{constRngSec}, we may assume that $x\in\vXir \cap \vQ_L$. Roughly speaking, we first move the point $x$ in a carefully chosen direction $v \in \partial B(o, 1)$ by a distance of $Kr$ for suitably chosen $K > 0$ to arrive at $x^{*}=x+Krv$. Then, we prove (i) $[x,x^{*}]\subset\vXir$ and (ii) $[x^{*},\pi_1(x^*)] \subset \vXir$, where $\pi_i(x)$ denotes a closest point to $x$ on the graph $\Rng(\vX\vda{i})$. To choose the good direction $v$, we proceed as follows. For any fixed $K > 0$, we note that after decreasing the upper bound on $r$ if necessary,  by Corollary~\ref{intSectCor} we may assume that the ball $\vB(x,2Kr)$ intersects at most three cells of $\Vor(\Rng(\vX\vda{1})\cup\Rng(\vX\vda{2}))$. If the ball intersects at most one cell associated with $\Rng(\vX\vda{1})$, then we choose $$v = (x - \pi_1(x))/|x - \pi_1(x)|$$ so as to push $x$ closer to the cell. On the other hand, if the ball intersects at most one cell associated with $\Rng(\vX\vda{2})$, we choose $$v = - (x - \pi_2(x))/|x - \pi_2(x)|.$$ That is, the point is pushed away from that cell. We provide a detailed proof for the second case, noting that similar arguments apply in the first case. We put $K = 14000L^3r_0^{-3}$ and may assume that $x$ does not satisfy~\eqref{standardConnEq}, as otherwise we could conclude the proof immediately. In particular, 
    \begin{align}
        \label{lowPiBound}
        |x - \pi_1(x)|\ge |x - \pi_2(x)| - 2r.
    \end{align}
     For part (i), we assert that $y_t = y + tv\in\vXi$ for each $0 \le t \le Kr$ and $y \in \vB(x,r)$. To prove this assertion, we have to distinguish the cases depending on whether the considered cell is a point cell or a line-segment cell. In the line-segment case, the triangle inequality yields that 
	 \begin{align*}
	 	\dist(y_t,I) = \dist(y, I) + t \ge \dist(y, \Rng(\vX\vda{1})) + t \ge \dist(y_t, \Rng(\vX\vda{1})),
	 \end{align*}
	 	 where $I$ is the line segment inducing the cell associated with $\Rng(\vX\vda{2}).$  
     Next, assume that the considered cell is a point cell induced by $\pi_2(x)$. In this case, typically it holds $\dist(y_t,I) < \dist(y, I) + t$ and we can not use the same arguments as in the line-segment case. Then, 
     \begin{align}
         \label{projEq}
         \pi_2(y_t) = \pi_2(x)
     \end{align}
     for each $0 \le t \le Kr$ and $y \in \vB(x,r)$ because this cell is the only cell of the Voronoi tessellation on $\Rng(\vX\vda{2})$ that hits $B(x, 2Kr)$. For $y\in\vB(x,r)$ we put $\widetilde{y} = \pi_2(x)+y-x$ and combine~\eqref{lowPiBound} and~\eqref{projEq} to arrive at
	$$ |x - \pi_2(x)| \le |x - \pi_1(x)| + 2r \le |x - \pi_1(y)| + 2r \le |y - \pi_1(y)| + 3r,$$
	and 
	$$|y - \pi_1(y)| \le |y - \pi_2(y)| \le |x - \pi_2(x)| + r.$$
	Letting $\alpha_y$ denote the angle between $\pi_1(y) - y$ and $\widetilde{y} - y = \pi_2(x) - x$, we therefore obtain that 
	 \begin{align}
		 \label{alphaBoundEq}
		 1-\cos\alpha_y=\frac{|\wt{y} - \pi_1(y)|^2 - (|y - \pi_1(y)| - |x - \pi_2(x)|)^2}{2|y - \pi_1(y)| |x - \pi_2(x)|}		 \ge \frac{9(r_0^2-r^2)}{24L^2}\ge\frac{r_0^2}{3L^2}.
	 \end{align}
	 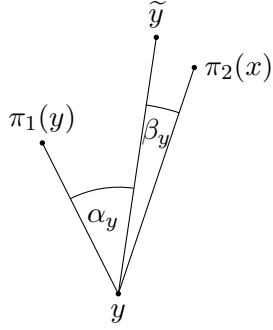
\begin{figure}[t]
	 	\centering
	 	\begin{tikzpicture}
	 	
	 	\coordinate[label=-90: {$y$}] (y) at (0,0);
	 	\coordinate[label=0: {$\pi_2(x)$}] (p2x) at (1,3);
	 	\coordinate[label=90: {$\wt{y}$}] (wty) at (0.5,3.4);
	 	\coordinate[label=90: {$\pi_1(y)$}] (p1x) at (-1,2);
	 	\coordinate[label=90: {$\beta_y$}] (by) at (0.5,1.8);
	 	\coordinate[label=90: {$\alpha_y$}] (by) at (-0.2,0.7);
	 	
	 	\draw (y)--(p2x);
	 	\draw (y)--(wty);
	 	\draw (y)--(p1x);
	 	
	 	\fill (y) circle (1pt);
	 	\fill (wty) circle (1pt);
	 	\fill (p2x) circle (1pt);
	 	\fill (p1x) circle (1pt);
	 	
	 	\draw (0.8,2.4) arc (71.6:82:2.5);
	 	\draw (0.2,1.4) arc (82:117:1.4);
	 	\end{tikzpicture}
	 	\caption{Configuration involving $y$, $\wt{y}$, $\pi_1(y)$ and $\pi_2(x)$}
	 	\label{alphayFig}
	 \end{figure}
	 Similarly, letting $\beta_y$ denote the angle between $\pi_2(x)-y$ and $\widetilde{y}-y=\pi_2(x)-x$, we obtain that 
	 \begin{align*}
		 1-\cos\beta_y=\frac{|\wt{y}-\pi_2(x)|^2-(|y-\pi_2(x)|-|x-\pi_2(x)|)^2}{2|y-\pi_2(x)||x-\pi_2(x)|} \le \frac{r^2}{4r_0^2},
	 \end{align*}
	 so that $\cos\beta_y\ge \cos\alpha_y$. Here we use 
	 $$2 |x-\pi_2(x)| \geq |\pi_1(x) - x| + |x - \pi_2(x)| \geq |\pi_1(x) - \pi_2(x)| \geq 4 r_0$$
	 by the definition of $r_0$. Analogously, one can show 
	 $|y -\pi_2(x)| \geq r_0.$ For an illustration of the geometric configuration, we refer the reader to Figure~\ref{alphayFig}. In particular, 
	\begin{align*}
		&|y_t-\pi_2(y_t)|-|y_t-\pi_1(y)| = \frac{|y_t-\pi_2(x)|^2-|y_t-\pi_1(y)|^2}{|y_t-\pi_2(x)| + |y_t-\pi_1(y)|}\\
			      &\quad=\frac{|y-\pi_2(x)|^2+t^2+2t|y-\pi_2(x)|\cos\beta_y-|y-\pi_1(y)|^2-t^2-2t|y-\pi_1(y)|\cos\alpha_y}{|y_t-\pi_2(x)| + |y_t-\pi_1(y)|}\\
		&\quad\ge(\cos\beta_y-\cos\alpha_y)\frac{2t|y-\pi_2(x)|}{|y_t-\pi_2(x)|+|y_t-\pi_1(x)|},
	\end{align*}
	which was shown above to be non-negative. For part (ii), we apply Lemma~\ref{standardConnLem} to reduce the assertion to $|x^{*} - \pi_2(x^*)| - |x^{*}-\pi_1(x^*)| \ge 2r$. Now using~\eqref{alphaBoundEq} for the angle $\alpha_x$ implies that
	\begin{align*}
		&|x^{*}-\pi_2(x^*)|-|x^{*}-\pi_1(x^*)|\\
		&\quad\ge|x^{*}-\pi_2(x)|-|x^{*}-\pi_1(x)|\\
		&\quad=\frac{|x^{*}-\pi_2(x)|^2-|x^{*}-\pi_1(x)|^2}{|x^{*}-\pi_2(x)|+|x^{*}-\pi_1(x)|}\\
		&\quad=\frac{(|x^{}-\pi_2(x)|+Kr)^2-(|x^{}-\pi_1(x)|+Kr)^2+2Kr|x^{}-\pi_1(x)|(1-\cos\alpha_x)}{|x^{*}-\pi_2(x)|+|x^{*}-\pi_1(x)|}\\
		&\quad\ge\frac{2Kr|x-\pi_1(x)|(1-\cos\alpha_x)}{|x^{*}-\pi_2(x)|+|x^{*}-\pi_1(x)|}\\
		&\quad\ge r\frac{Kr_0^3}{7000L^3},
	\end{align*}
	and by the choice of $K$, the last line is bounded below by $2r$, as required.
	\end{proof}

\bibliography{C:/Users/matthias/Bib/Literatur}
\bibliographystyle{rss}

\vspace{1cm}

Correspondence: Matthias Neumann, Institute of Stochastics, Ulm University, Helmholtzstra\ss e 18, 89069 Ulm, Germany, email: matthias.neumann@uni-ulm.de
\end{document}